\def\u{{\bf u}}
\def\bv{{\bf v}}
\def\n{{\bf n}}
\def\f{{\bf f}}
\def\x{{\bf x}}
\def\btau{\boldsymbol{\tau}}
\def\bbeta{\boldsymbol{\eta}}
\def\bs{\boldsymbol{\sigma}}
\def\bxi{\boldsymbol{\xi}}
\def\bchi{\boldsymbol{\chi}}
\def\bmu{\boldsymbol{\mu}}
\def\bvarphi{\boldsymbol{\varphi}}
\def\bL{\boldsymbol{\Lambda}}
\def\U{{\bf U}}
\def\V{{\bf V}}
\def\M{{\bf M}}
\def\D{{\bf D}}
\def\X{{\bf X}}
\def\A{{\bf A}}
\def\B{{\bf B}}
\def\E{\boldsymbol{H}}
\def\bI{{\bf I}}
\def\I{{\cal I}}
\def\d{\partial}
\def\T{{\cal T}}
\def\R{{\bf R}}
\def\grad{\nabla}
\def\div{\grad\cdot}
\def\O{\Omega}
\def\<{\langle}
\def\>{\rangle}
\def\CO2{{CO$_{2}$}}
\def\ol{\overline}
\def\cl {\nonumber \\}
\def\el {\nonumber }
\def\ddt{d_t}
\def\dt{d_t}
\def\dtt{d_{tt}}
\def\ds{\displaystyle}
\newtheorem{theorem}{Theorem}[section]
\newtheorem{remark}{Remark}[section]
\begin{document}

\title{A Robin-Robin splitting method for the Stokes-Biot fluid-poroelastic structure interaction model}

\author{ Aashi Dalal\thanks{Department of Mathematics, University of Pittsburgh, Pittsburgh, PA 15260, USA; email: {\tt aad100@pitt.edu}.}
~\and
Rebecca Durst\thanks{Argonne National Laboratory, Lemont, Il 60439, USA; email: {\tt rdurst@anl.gov}; partially supported by the U.S. Department of Energy, Office of Science, Office of Advanced Scientific Computing Research (ASCR) program under Contract DE-AC02-06CH11357.}
\thanks{Department of Mathematics, University of Pittsburgh, Pittsburgh, PA 15260, USA.}
~\and
Annalisa Quaini\thanks{Department of Mathematics, University of Houston,
Houston, TX 77204, USA; email: {\tt aquaini@central.uh.edu};
partially supported by NSF grant {DMS-1953535}.}
  ~\and
  Ivan Yotov\thanks{Department of Mathematics,
    University of Pittsburgh, Pittsburgh, PA 15260, USA;
    email: {\tt yotov@math.pitt.edu}; partially supported by NSF grants DMS-2111129 and DMS-2410686.}}
\date{\today}
\maketitle
\begin{abstract}
\noindent 
We develop and analyze a splitting method for fluid-poroelastic structure interaction. The fluid is described using the Stokes equations and the poroelastic structure is described using the Biot equations. The transmission conditions on the interface are mass conservation, balance of stresses, and the Beavers-Joseph-Saffman condition. The splitting method involves single and decoupled Stokes and Biot solves at each time step. The subdomain problems use Robin boundary conditions on the interface, which are obtained from the transmission conditions. The Robin data is represented by an auxiliary interface variable. We prove that the method is unconditionally stable and establish that the time discretization error is {$\mathcal{O}(\sqrt{T}\Delta t)$}, where $T$ is the final time and $\Delta t$ is the time step. We further study the iterative version of the algorithm, which involves an iteration between the Stokes and Biot sub-problems at each time step. We prove that the iteration converges to a monolithic scheme with a Robin Lagrange multiplier used to impose the continuity of the velocity. Numerical experiments are presented to illustrate the theoretical results.
\end{abstract}
\noindent {\bf Keywords}: Stokes-Biot; fluid-poroelastic structure interaction; domain decomposition;  Robin interface condition.

\section{Introduction}

We consider the interaction of an incompressible, viscous, and Newtonian fluid
with a poroelastic medium, referred to as fluid-poroelastic structure interaction (FPSI). This phenomenon occurs in a wide range of applications, including
surface-groundwater flows, geomechanics, reservoir engineering, filter design, seabed-wave interaction, and arterial blood flows. The modeling leads to coupled problems that present significant mathematical and computational challenges. 

We model the incompressible flow with the Stokes equations and the poroelastic medium with the
Biot system \cite{b1941}. In the latter, the equation describing the deformation of the elastic porous matrix is complemented with the Darcy equations that describe the average
velocity and pressure of the fluid in the pores. The Stokes and Biot problems are coupled at the interface between the fluid and porous regions through dynamic and kinematic transmission conditions. The Stokes-Biot model combines two classical well-studied kinds of coupling: the fluid-(elastic)structure interaction (FSI) with thick structure and the Stokes–Darcy coupling. While historically the Stokes-Biot and Navier-Stokes-Biot couplings have been less studied, they have received increasing attention in recent years.

Early works on the coupled Stokes-Biot problem include \cite{muradg2,s2005}. Well posedness of the fully dynamic model was established in \cite{s2005}. In \cite{bqq2009},
both monolithic solvers and partitioned approaches based on 
domain decomposition methods \cite{B-quarteroniv1} 
were applied to the Navier-Stokes-Biot problem, whose well-posedness with a non-mixed Darcy formulation is established in \cite{cesm2017}.
A non-iterative partitioned method based on 
operator splitting for the Navier-Stokes-Biot problem
with non-mixed Darcy formulation was introduced in 
\cite{byz2015} and extended in \cite{Bukac-JCP}. The model with a mixed 
Darcy formulation was studied in \cite{fpsi-nitsche}. In this formulation, the continuity of flux
across the interface is a condition of essential type, 
which is enforced with the Nitsche’s interior penalty method. A mixed formulation for the Darcy problem in the Stokes-Biot coupling was also adopted 
in \cite{akyz2018}, where the the continuity of flux is
imposed via a Lagrange multiplier method.
A more complex Stokes-Biot problem involving a non-Newtonian fluid is considered in \cite{aeny2019}.
A finite element method for the four-field Stokes velocity-pressure and Biot displacement-pressure formulation is presented in \cite{Cesm-Chid}. A total pressure formulation for the Stokes-Biot problem is introduced in 
\cite{Stokes-Biot-eye}, a stress-displacement mixed elasticity formulation is studied in \cite{fpsi-mixed-elast}, a fully mixed formulation is developed in \cite{fpsi-msfmfe}, and a HDG method is presented in \cite{HDG-SB}. An augmented finite element method for the fully mixed Navier-Stokes-Biot problem is developed in \cite{fpsi-augmented}.
Several interesting extensions of the Stokes-Biot problem have also been proposed, including a dimensionally-reduced model for flow through fractures \cite{Buk-Yot-Zun-fracture}, coupling with transport \cite{fpsi-transport,Canic-stent},
multi-layered porous media \cite{Bociu-etal-2021}, 
and porohyperelastic media \cite{Seboldt-etal-2021}. An optimization-based decoupling method is presented in \cite{Cesm-etal-optim}. Second order in time split schemes are developed in
\cite{Kunwar-etal,pb2024}. Parameter-robust preconditioners are studied in \cite{Boon-etal-precond}.

In this paper, we study a new Robin-Robin partitioned method for the Stokes-Biot problem. We employ a velocity-pressure Stokes formulation, a displacement-based elasticity formulation, and a mixed velocity-pressure Darcy formulation. The starting point is a rewriting of the coupling conditions for the Stokes-Biot problem, which state mass conservation, balance of stresses, and slip with friction (Beavers-Joseph-Saffman condition). These conditions are combined to generate two sets of Robin boundary conditions on the interface - one for the Stokes problem and the other for the Biot problem. Such approach was first utilized for FPSI in \cite{bqq2009}, and later used in \cite{Seboldt-etal-2021,pb2024}. The motivation is to alleviate the difficulty of the so-called added-mass-effect, which has been observed in FSI for certain parameter regimes \cite{Causin-added-mass-effect}. 
This effect may cause classical Neumann-Dirichlet split methods to become unstable or the convergence of their iterative versions to deteriorate \cite{Causin-added-mass-effect,bqq2009}. It has been shown that Robin-Robin schemes are more robust for wider ranges of physical parameters \cite{Badia-Robin-FSI,bqq2009}. The algorithm developed in this paper differs from the methods in \cite{bqq2009,Seboldt-etal-2021,pb2024}. It is inspired by a sequential Robin-Robin domain decomposition method for the Stokes-Darcy coupling introduced in \cite{Disc-Quart-Valli-2007}.
The key ingredient in the design of the algorithm is an auxiliary vector variable used
to approximate the interface Robin data, which is modeled in a suitable norm. This avoids the explicit appearance of the normal stress in the interface terms, which does not have sufficient regularity for the stability of the subdomain problems and lead to suboptimal approximation in space and time. In its non-iterative form, our FPSI algorithm requires the sequential solution of one Stokes problem and one Biot problem per time step. The appealing features of this algorithm
are modularity (one can use their favorite Stokes and Biot finite element approximations), the option to use non-matching meshes, and the reduced computational cost that comes from the lack of sub-iterations. It is not unusual for partitioned methods to have stability issues or suffer from a loss of accuracy. We prove that our non-iterative method is unconditionally stable and establish an error estimate for the quasistatic model showing that the time discretization error is $\mathcal{O}(\sqrt{T}\Delta t)$, where $T$ is the final time and $\Delta t$ is the time step.
We remark that in \cite{bqq2009} the Robin-Robin method is only studied computationally, while in \cite{bqq2009,Seboldt-etal-2021,pb2024} only stability analysis is performed. Thus, to the best of our knowledge, this is the first result in the literature on both unconditional stability and optimal time discretization error for a non-iterative Robin-Robin split scheme for FPSI.

We also present the iterative version of the algorithm, i.e., at every time step one iterates over the Stokes and Biot sub-problems until convergence. We prove that the solution of the iterative method converges to the solution of a monolithic scheme. The auxiliary interface variable converges to a Robin Lagrange multiplier used to impose weakly the velocity continuity condition. To the best of our knowledge, the resulting monolithic scheme has not been studied in the literature. We prove that a unique and stable solution to the monolithic scheme exists.
While the iterative FPSI algorithm has an increased computational cost, it has the advantage of not introducing a splitting error while still allowing to recycle existing fluid and structure solvers. 
For the monolithic scheme instead, one needs to implement a coupled solver. 
We assess the convergence, robustness, and accuracy
of the non-iterative and iterative Robin-Robin methods and the monolithic scheme with 
two benchmarks: a test that features an exact solution and a simplified blood flow problem.

We further remark that the FPSI problem is a generalization of FSI with thick structure. Therefore, the techniques developed in this paper apply to the corresponding Robin-Robin algorithm for FSI, which is also new. In recent years, alternative unconditionally stable Robin-Robin methods for FSI have been developed in \cite{Seb-Buc-FSI,BDFG22,BDFG22-discrete}, where discretization error of order $\mathcal{O}(\sqrt{\Delta t})$ is established. An improved convergence of order $\mathcal{O}(\Delta t\sqrt{T + \log\frac{1}{\Delta t}})$ is obtained in \cite{BDFG23} for a related model in a specific geometry. Our work provides a generalization and improvement of these results. 

The remaining of the paper is structured as follows. Sec.~\ref{sec:problem} describes the coupled Stokes-Biot problems. Sec.~\ref{sec:RR} presents the non-iterative Robin-Robin algorithm, whose stability analysis is carried out in Sec.~\ref{sec:stab}. The time discretization error analysis for the quasistatic model is presented in Sec.~\ref{sec:error}. The iterative version of the algorithm is developed in Sec.~\ref{sec:iter}. The numerical results are discussed in Sec.~\ref{sec:num_res}. Finally, conclusions are drawn in Sec.~\ref{sec:concl}.

\section{Problem setting}\label{sec:problem}

We consider a multiphysics model problem studied in \cite{akyz2018} that describes the interaction of a free fluid  with a flow in a 
deformable porous media. The spatial domain $\Omega \subset
\R^d$, $d = 2,3$ is the union of non-overlapping 
regions $\O_f$ and $\O_p$, see Figure~\ref{fig:domain_sketch}.
Here, $\O_f$ is a free fluid region
with flow governed by the Stokes equations and $\O_p$ is a poroelastic
material governed by the Biot system. For simplicity of notation, we assume 
that each region is connected. The extension to non-connected regions is 
straightforward. Let $\Gamma_{fp} = \d \O_f \cap
\d \O_p$ be the interface. Let $(\u_\star,p_\star)$ be the velocity-pressure pair in
$\O_\star$, $\star = f$, $p$, and let $\bbeta_p$ be the displacement
in $\O_p$.  Let $\mu_{f} > 0$ 
be the fluid viscosity,
let $\f_\star$ be the body force terms, and let
$q_\star$ be external source or sink terms.  Finally, let $\D(\u_f)$ and
$\bs_f(\u_f,p_f)$ denote, respectively, the deformation rate tensor
and the Cauchy stress tensor:
\begin{equation}\label{sigma-f-defn}
\D(\u_f) = \frac 12 (\grad \u_f + \grad\u_f^T), \qquad
\bs_f(\u_f,p_f) = -p_f \bI + 2\mu_{f} \D(\u_f).
\end{equation}

\begin{figure}[htb!]
\centering
\begin{overpic}[width=.4\textwidth, grid=false]{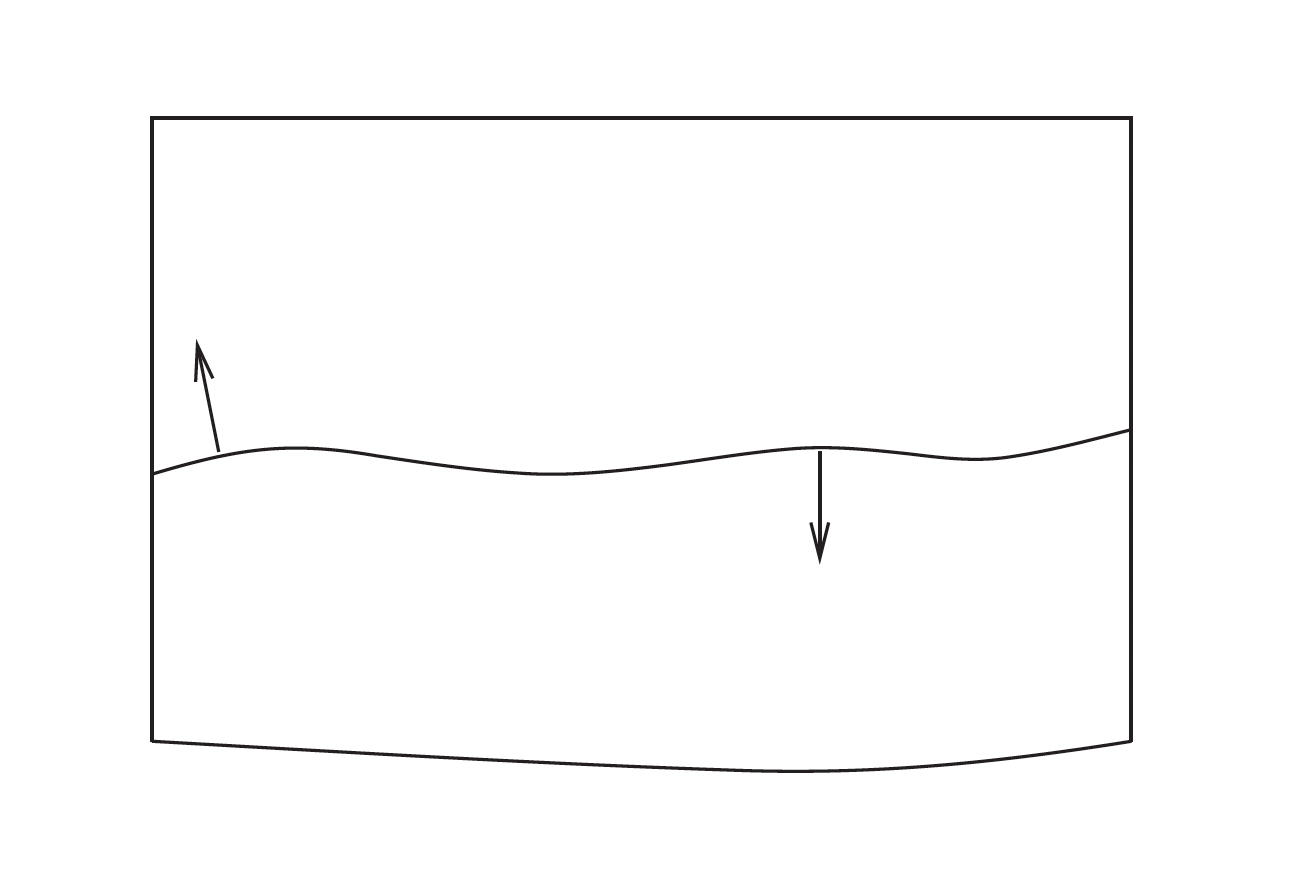}
\put(60,50){$\Omega_f$}
\put(25,15){$\Omega_p$}
\put(40,35){$\Gamma_{fp}$}
\put(65,25){$\n_f$}
\put(18,40){$\n_p$}
\put(5,45){$\Gamma_{f}$}
\put(90,45){$\Gamma_{f}$}
\put(45,62){$\Gamma_{f}$}
\put(5,22){$\Gamma_{p}$}
\put(90,22){$\Gamma_{p}$}
\put(45,5){$\Gamma_{p}$}
\end{overpic}
\caption{Schematic representation of a 2D computational domain.}
\label{fig:domain_sketch}
\end{figure}

In the free fluid region $\O_f$, $(\u_f,p_f)$ satisfy 
the Stokes equations
\begin{align}
	\rho_f {\partial_t \u_f}-\div \bs_f(\u_f,p_f)  &= \f_f \quad \mbox{in } \O_f \times (0,T] \label{stokes1} \\
	 \div \u_f &=  q_f   \quad \mbox{in } \O_f \times (0,T], \label{stokes2}
\end{align}
where  $\d_t = \frac{\d}{\d t}$ and $T > 0$ is the end of the time interval of interest.
Let $\bs_e(\bbeta_p)$ and $\bs_p(\bbeta_p,p_p)$ be the elastic and poroelastic stress tensors, 
respectively:
\begin{equation}\label{stress-defn}
	\bs_e(\bbeta_p) = \lambda_p (\div \bbeta_p) \bI + 2\mu_p \D(\bbeta_p), \qquad
	\bs_p(\bbeta_p,p_p) = \bs_e(\bbeta_p) - \alpha p_p \bI,
\end{equation}
where $ 0 < \lambda_{min} \le \lambda_p(\x) \le \lambda_{max}$ and 
$0 < \mu_{min} \le \mu_p(\x) \le \mu_{max} $ are the Lam\'{e} parameters and 
$0 \le \alpha \le 1$ is the Biot-Willis constant. 
The poroelasticity region $\O_p$ is governed by the Biot system \cite{b1941}
\begin{eqnarray}
\rho_p {\partial_{tt} \bbeta}- \div \bs_p(\bbeta_p,p_p) &= \f_p \quad \mbox{in } \O_p\times (0,T],  \label{eq:biot0} \\
\mu_{f} K^{-1}\u_p + \grad p_p &= 0 \quad \mbox{in } \O_p\times (0,T], \label{eq:biot1}\\ 
	{\d_t} \left(s_0 p_p + \alpha \div \bbeta_p\right) + \div \u_p &= q_p 
	\quad \mbox{in } \O_p\times (0,T], \label{eq:biot2}
\end{eqnarray}
where $\d_{tt} = \frac{\d^2}{\d t^2}$, $s_0 > 0$ is a storage coefficient and $K$ the symmetric and uniformly positive definite permeability tensor, satisfying, for some constants 
$0 < k_{min} \le k_{max}$,
$$
\forall \, \bxi \in \R^d, \,\,  
k_{min} \bxi^T\bxi \le \bxi^T K(\x) \bxi \le k_{max} \bxi^T\bxi, \,\, \forall \, 
\x \in \Omega_p.
$$

Following \cite{bqq2009,s2005}, the 
{\it interface conditions} on the 
fluid-poroelasticity interface $\Gamma_{fp}$ are {\it mass conservation},
{\it balance of stresses}, and the Beavers-Joseph-Saffman (BJS) condition
\cite{bj1967,s1971} modeling {\it slip with friction}: 
\begin{align}
	\label{eq:mass-conservation}
	& \u_f\cdot\n_f + \left({\d_t \bbeta_p} + \u_p\right)\cdot\n_p = 0 
&& \mbox{on } \Gamma_{fp}\times (0,T], 
	\\
        \label{balance-stress}
	& - (\bs_f \n_f)\cdot\n_f =  p_p, \qquad 	\bs_f\n_f + \bs_p\n_p = 0
&& \mbox{on } \Gamma_{fp}\times (0,T],
	\\
	\label{Gamma-fp-1}
	& - (\bs_f\n_f)\cdot\btau_{f,j} = \mu_{f}\alpha_{BJS}\sqrt{K_j^{-1}}
	\left(\u_f - {\d_t \bbeta_p}\right)\cdot\btau_{f,j} 
&& \mbox{on } \Gamma_{fp}\times (0,T],
\end{align}
where $\n_f$ and $\n_p$ are the outward unit normal vectors to $\d
\O_f$, and $\d \O_p$, respectively, $\btau_{f,j}$, $1 \le j \le d-1$,
is an orthogonal system of unit tangent vectors on $\Gamma_{fp}$, $K_j
= (K \btau_{f,j}) \cdot \btau_{f,j}$, and $\alpha_{BJS} \ge 0$ is an
experimentally determined friction coefficient.  We note that the
continuity of flux \eqref{eq:mass-conservation} constrains the normal velocity of the solid
skeleton, while the BJS condition \eqref{Gamma-fp-1} accounts for its tangential
velocity.

The above system of equations needs to be complemented by a set of
boundary and initial conditions. Let $\Gamma_f = \partial \O_f \cap \d\Omega$
and $\Gamma_p = \partial \O_p \cap \d\Omega$, see Figure~\ref{fig:domain_sketch}.
Let {$\Gamma_f = \Gamma_f^D \cup \Gamma_f^N$ with $|\Gamma_f^D| > 0$} and $\Gamma_p = \Gamma_p^D \cup \Gamma_p^N = \tilde\Gamma_p^D \cup \tilde\Gamma_p^N$.
We assume for simplicity homogeneous boundary conditions: for every $t \in [0,T]$,
$$
{\u_f = \boldsymbol{0} \mbox{ on } \Gamma_f^D, \ \ \bs_f\n_f = \boldsymbol{0} \mbox{ on } \Gamma_f^N,}
{\ \ \bbeta_p = \boldsymbol{0} \mbox{ on } \Gamma_p^D,
\ \ \bs_p\n_p = \boldsymbol{0} \mbox{ on } \Gamma_p^N,}
\ \ p_p = 0 \mbox{ on } \tilde\Gamma_p^D, \ \
\u_p \cdot \n_p = 0 \mbox{ on } \tilde\Gamma_p^N.
$$
{To simplify the characterization of the space for the trace $\u_f|_{\Gamma_{fp}}$, we assume that $\Gamma_f^D$ is not adjacent to $\Gamma_{fp}$. In the case when they touch, the boundary condition $\u_f = \boldsymbol{0}$ on $\Gamma_f^D$ needs to be imposed weakly by introducing a Lagrange multiplier $\bvarphi = \bs_f\n_f$ on $\Gamma_f^D$. We omit further details for this case.} Finally, we set the initial conditions
\begin{equation}\label{init-cond}
\u_f(\mathbf{x},0) =\u_{f,0} \mbox{ in } \O_f, \,\, 
p_p(\mathbf{x},0) = p_{p,0}(\mathbf{x}), \,\, 
\bbeta_p(\mathbf{x},0) = \bbeta_{p,0}(\mathbf{x}), \,\,  \d_{t}\bbeta_p(\mathbf{x},0) = \u_{s,0} (\mathbf{x})\mbox{ in } \O_p.
\end{equation}

{Compatible initial data $p_{f,0}$ for $p_f(0)$ and $\u_{p,0}$ for $\u_p(0)$ can be obtained by solving at $t = 0$ a sequence of single-physics sub-problems satisfying the interface conditions \eqref{eq:mass-conservation}--\eqref{Gamma-fp-1}, see \cite{aeny2019}.
}

{The solvability of the fully dynamic Stokes-Biot system with compressible Stokes fluid 
was discussed in \cite{s2005}. The well posedness analysis of the incompressible quasistatic system has been carried out in \cite{aeny2019}. The proof extends easily to the fully dynamic incompressible system
\eqref{stokes1}--\eqref{Gamma-fp-1} considered here.}

Let $(\cdot,\cdot)_S$, $S \subset \R^d$, be the
$L^2(S)$ inner product and let $\<\cdot,\cdot\>_F$, $F \subset
\R^{d-1}$, be the $L^2(F)$ inner product or duality pairing. We will
use the standard notation for Sobolev spaces, see, e.g.
\cite{c1978}. Let
\begin{align}
\V_{f} &= \{ \bv_f \in (H^1(\O_{f}))^d : \bv_f = \boldsymbol{0} \text{ on }\Gamma_f^D\}, \quad W_{f} = L^2(\O_{f}), \nonumber
\\
\V_{p} &= \{ \bv_p \in H({\rm div}; \O_{p}) : {\bv_p \cdot \n_p|_{\Gamma_{fp}} \in L^2(\Gamma_{fp})}
\text{ and } \bv_p \cdot \n_p = 0 \text{ on } \tilde\Gamma_p^N \}, \quad W_{p} = L^2(\O_{p}), \nonumber
\\
\X_{p} &= \{ \bxi_p \in H^1(\O_{p})^d : \bxi_p= \boldsymbol{0} \text{ on } \Gamma_p^D \}, \label{spaces}
\end{align}
where $H({\rm div}; \O_{p})$ is the space of $(L^2(\O_p))^d$-vectors with
divergence in $L^2(\O_p)$ with a norm
$$
\|\bv\|_{H({\rm div}; \O_{p})}^2 = \|\bv\|_{L^2(\Omega_p)}^2 + \|\div\bv\|_{L^2(\Omega_p)}^2,
$$
and the space $\V_p$ is equipped with the norm
\begin{equation}\label{Vp-norm}
{\|\bv\|_{\V_p}^2 = \|\bv\|_{H({\rm div}; \O_{p})}^2 + \|\bv\cdot\n_p\|_{L^2(\Gamma_{fp})}^2.}
\end{equation}

\section{Robin-Robin partitioned algorithm}\label{sec:RR}

For the solution of the coupled problem presented in Section~\ref{sec:problem}, {we consider
a Robin-Robin splitting algorithm motivated by the method proposed in \cite{bqq2009}}. For this, we rewrite the coupling conditions \eqref{eq:mass-conservation}--\eqref{Gamma-fp-1} in the equivalent form of Robin conditions. To simplify the notation, consider a single
tangential vector $\btau$ and let $\gamma_{BJS} = \sqrt{K_{\tau}}/(\mu_{f} \alpha_{BJS})$. {Let $\gamma_f > 0$ and $\gamma_p > 0$ be given combination parameters.} For the fluid subproblem, we consider the following Robin transmission conditions
\begin{subequations}\label{stokes-robin}
\begin{align}
  & \gamma_f\u_f\cdot\n_f + (\bs_f \n_f)\cdot\n_f =
  -\gamma_f \left(\d_t \bbeta_p + \u_p\right)\cdot\n_p + (\bs_p \n_p)\cdot\n_p,
  \label{stokes-robin-1} \\
  &\gamma_f\u_f\cdot\btau_{f}
  + \left(1 + \gamma_f\gamma_{BJS}  \right)(\bs_f\n_f)\cdot\btau_{f}
  = - \gamma_f \d_t \bbeta_p\cdot\btau_{p} + (\bs_p\n_p)\cdot\btau_{p},
  \label{stokes-robin-2}
\end{align}
\end{subequations}
while the transmission conditions for the poroelastic structure are
\begin{subequations}\label{biot-robin}
\begin{align}
& \gamma_p(\u_p + \d_t\bbeta_p)\cdot\n_p + (\bs_p \n_p)\cdot\n_p =
- \gamma_p \u_f\cdot\n_f + (\bs_f \n_f)\cdot\n_f, \label{biot-robin-1}\\
& \gamma_p \d_t \bbeta_p\cdot\btau_{p} + (\bs_p\n_p)\cdot\btau_{p} =
- \gamma_p \u_f\cdot\btau_{f}
+ \left(1 -\gamma_p \gamma_{BJS} \right)
(\bs_f\n_f)\cdot\btau_{f}, \label{biot-robin-2}\\
& \gamma_p (\u_p + \d_t\bbeta_p)\cdot\n_p - p_p =
-\gamma_p \u_f\cdot\n_f + (\bs_f \n_f)\cdot\n_f. \label{biot-robin-3}
\end{align}
\end{subequations}

To simplify the notation, let
\begin{align*}
a_{f}(\u_{f},\bv_{f}) &= (2\mu_{f}\D(\u_{f}),\D(\bv_{f}))_{\O_{f}},
\\
a^d_{p}(\u_{p},\bv_{p}) &= (\mu_{f} K^{-1}\u_{p},\bv_{p})_{\O_{p}},
\\
a^e_{p}(\bbeta_p,\bxi_p) &= (2\mu_p\D(\bbeta_p),\D(\bxi_p))_{\O_{p}}+(\lambda_p\nabla\cdot\bbeta_p,\nabla\cdot\bxi_p)_{\O_{p}}
\end{align*}
be the bilinear forms related to Stokes, Darcy, and the elasticity
operators, respectively.  Let 
$$
b_{\star}(\bv,w) = -(\div \bv,w)_{\O_\star}.
$$
Using standard techniques involving multiplication by suitable test functions and integration by parts, as well as the interface transmission conditions \eqref{stokes-robin}, we obtain that the solution of the system \eqref{stokes1}--\eqref{Gamma-fp-1}, $(\u_{f}, p_{f}, \bbeta_{p},\u_p,p_{p}): [0,T] \to \V_f\times W_f\times \X_p \times \V_p \times W_p$
satisfies the following variational formulation for each $t \in (0,T]$ and for all $(\bv_f,w_f,\bxi_p,\bv_p,w_p) \in \V_f\times W_f \times \X_p \times \V_p \times W_p$:
\begin{align}
  & \left(\rho_f {\partial_t \u_{f}},\bv_{f}\right)_{\O_f}  + a_{f}(\u_{f},\bv_{f}) + b_f(\bv_{f},p_{f})
  + \gamma_f\<\u_f\cdot\n_f,\bv_f\cdot\n_f\>_{\Gamma_{fp}}
  +  \gamma_f\<\u_f\cdot\btau_{f},\bv_f\cdot\btau_{f}\>_{\Gamma_{fp}}
  \nonumber \\
  & \qquad  
  = (\f_{f},\bv_{f})_{\O_f}
  - \gamma_f\<(\d_t \bbeta_p + \u_p)\cdot\n_p,\bv_f\cdot\n_{f}\>_{\Gamma_{fp}} 
  + \<(\bs_p \n_p)\cdot\n_p,\bv_f\cdot\n_{f}\>_{\Gamma_{fp}}
  \nonumber \\
  & \qquad\quad
  - \gamma_f\<\d_t\bbeta_p\cdot\btau_p,\bv_f\cdot\btau_{f}\>_{\Gamma_{fp}}
  + \<(\bs_p\n_p)\cdot\btau_p,\bv_f\cdot\btau_{f}\>_{\Gamma_{fp}} 
  - \gamma_f\gamma_{BJS}\<(\bs_f\n_f)\cdot\btau_{f},\bv_f\cdot\btau_{f}\>_{\Gamma_{fp}}, \label{semi-1}\\
  & - b_f(\u_{f},w_{f}) = (q_{f},w_{f})_{\O_f}, \label{semi-2}\\
  &\big( \rho_p \d_{tt} \bbeta_{p},\bxi_{p}\big)_{\O_p}
  + a^e_{p}(\bbeta_{p},\bxi_{p})
  + a^d_{p}(\u_{p},\bv_{p})
  + \alpha b_p(\bxi_{p},p_{p})
  + b_p(\bv_{p},p_{p})
  \nonumber \\
  & \qquad\qquad  + \gamma_p\<(\u_p + \d_t\bbeta_p)\cdot\n_p,
        (\bv_p + \bxi_p)\cdot\n_p\>_{\Gamma_{fp}}
  + \gamma_p\<\d_t\bbeta_p\cdot\btau_p,\bxi_p\cdot\btau_p\>_{\Gamma_{fp}}
  \nonumber \\
  & \qquad = (\f_{p},\bxi_{p})_{\O_p} + \<(-\gamma_p\u_f + \bs_f\n_f)\cdot\n_f,
  (\bv_p + \bxi_p)\cdot\n_p\>_{\Gamma_{fp}} \nonumber \\
  & \qquad\qquad -\gamma_p\<\u_f\cdot\btau_f,\bxi_p\cdot\btau_p\>_{\Gamma_{fp}}
  +(1- \gamma_p\gamma_{BJS})\<(\bs_f\n_f)\cdot\btau_f,\bxi_p\cdot\btau_p\>_{\Gamma_{fp}}, \label{semi-3}\\
  & s_0 (\d_t p_{p},w_{p})_{\O_p} - \alpha b_p(\d_t\bbeta_{p},w_{p}) - b_p(\u_{p},w_{p}) = (q_{p},w_{p})_{\O_p}. \label{semi-4}
\end{align}
In the above, we assume that the solution to \eqref{stokes1}--\eqref{Gamma-fp-1} is sufficiently smooth, so that all interface bilinear forms are continuous.

  {Let $\mathcal{T}_{f,h}$ and $\mathcal{T}_{p,h}$ be shape-regular finite element partitions of $\Omega_f$ and $\Omega_p$, respectively, consisting of affine elements with maximal diameter $h$. The two meshes may not match along $\Gamma_{fp}$.} For the space discretization we consider a stable Stokes finite element pair $\V_{f,h}\times W_{f,h} \subset \V_f\times W_f$, a finite element space $\X_{p,h} \subset \X_p$ for the displacement, and any stable Darcy pair $\V_{p,h}\times W_{p,h} \subset \V_p\times W_p$. Examples include include the Taylor-Hood or the MINI elements for $\V_{f,h}\times W_{f,h}$, continuous piecewise polynomials for $\X_{p,h}$ and the Raviart-Thomas or the Brezzi-Douglas-Marini elements for $\V_{p,h}\times W_{p,h}$. See \cite{boffi2013mixed} for further details.

  \begin{remark}
    In the following, in order to simplify the notation, we suppress the subscript $h$ from the variables.
  \end{remark}

For the time discretization, we consider a uniform partition of $[0,T]$ with time step $\Delta t = T/N$ and $t_n = n\Delta t$, $n = 0,\ldots,N$. Let $\varphi^{n} = \varphi(t_n)$.
For $n \ge 0$, let $\dt \varphi^{n+1} := (\varphi^{n+1} - \varphi^{n})/{\Delta t}$. Let
$\dtt\bbeta_p^{n+1} := \dt\dt\bbeta_p^{n+1} = (\dt\bbeta_p^{n+1} - \dt\bbeta_p^n)/\Delta t$ for $n \ge 0$. Note that $\dtt\bbeta_p^{n+1} = (\bbeta_p^{n+1} -2 \bbeta_p^{n}+\bbeta_p^{n-1})/{\Delta t^2}$ for $n \ge 1$, while for $n = 0$, $\dt\bbeta_p^0 := P_X\u_{s,0}$, using the initial condition \eqref{init-cond}, where $P_X:(L^2(\Omega_p))^d \to \X_p$ is the $L^2$-orthogonal projection.

{Next, we present a time-splitting Robin-Robin algorithm that is similar to the iterative scheme introduced in \cite{bqq2009}.} At every time $t^{n+1}$, $n = 0,\ldots,N-1$, the Robin-Robin algorithm involves solving decoupled fluid and poroelastic structure sub-problems:

\medskip

1. Stokes problem: find $(\u_{f}^{n+1}, p_{f}^{n+1}) \in \V_{f,h}\times W_{f,h}$ such that for all $(\bv_f,w_f) \in \V_{f,h}\times W_{f,h}$,
\begin{align}
  & \left(\rho_f {\dt \u_{f}^{n+1}},\bv_{f}\right)_{\O_f} \! + \! a_{f}(\u_{f}^{n+1},\bv_{f}) + b_f(\bv_{f},p_{f}^{n+1})
  + \gamma_f\<\u_f^{n+1}\cdot\n_f,\bv_f\cdot\n_f\>_{\Gamma_{fp}}
  +  \gamma_f\<\u_f^{n+1}\cdot\btau_{f},\bv_f\cdot\btau_{f}\>_{\Gamma_{fp}}
  \nonumber \\
  & \quad  
  = (\f_{f},\bv_{f})_{\O_f}
  - \gamma_f\<(\dt\bbeta_p^n + \u_p^n)\cdot\n_p,\bv_f\cdot\n_{f}\>_{\Gamma_{fp}} 
  + \<(\bs_p^n \n_p)\cdot\n_p,\bv_f\cdot\n_{f}\>_{\Gamma_{fp}}
  \nonumber \\
  & \qquad
  - \gamma_f\<\dt\bbeta_p^n\cdot\btau_p,\bv_f\cdot\btau_{f}\>_{\Gamma_{fp}}
  + \<(\bs_p^n\n_p)\cdot\btau_p,\bv_f\cdot\btau_{f}\>_{\Gamma_{fp}} 
  - \gamma_f\gamma_{BJS}\<(\bs_f^{n}\n_f)\cdot\btau_{f},\bv_f\cdot\btau_{f}\>_{\Gamma_{fp}}, \\
  & - b_f(\u^{n+1}_{f},w_{f}) = (q_{f},w_{f})_{\O_f},
\end{align}
which corresponds to the {finite element approximation} of the weak formulation of problem \eqref{stokes1}--\eqref{stokes2} supplemented with
the following Robin boundary conditions on $\Gamma_{fp}$:
\begin{subequations}\label{stokes-robin-mu}
\begin{align}
  & \gamma_f\u_f^{n+1}\cdot\n_f + (\bs_f^{n+1} \n_f)\cdot\n_f =
  -\gamma_f \left(\dt\bbeta_p^n + \u_p^n\right)\cdot\n_p + (\bs_p^n \n_p)\cdot\n_p,
  \label{stokes-robin-mu-1} \\
  &\gamma_f\u_f^{n+1}\cdot\btau_{f}
  + (\bs_f^{n+1}\n_f)\cdot\btau_{f}
  + \gamma_f\gamma_{BJS}(\bs_f^{n}\n_f)\cdot\btau_{f}
  = - \gamma_f \dt\bbeta_p^n\cdot\btau_{p} + (\bs_p^n\n_p)\cdot\btau_{p}.
  \label{stokes-robin-mu-2}
\end{align}
\end{subequations}
It is obtained by applying the backward Euler time discretization to \eqref{semi-1}--\eqref{semi-2} and time-lagging the interface quantities from the Biot region as well as the term $\gamma_f\gamma_{BJS}(\bs_f\n_f)\cdot\btau_{f}$.

Using the initial conditions \eqref{init-cond}, at $n = 0$ we set
  $\phi^0 := P_\phi\phi_0$ for any variable $\phi$, where $P_\phi$ is the $L^2$-orthogonal projection onto the corresponding finite element space and 
$\bs_f^0$ and $\bs_p^0$ are obtained from \eqref{sigma-f-defn} and \eqref{stress-defn}, respectively. We also recall that $\dt\bbeta_{p}^0 = P_X\u_{s,0}$.

\medskip
2. Biot problem: find $(\bbeta_{p}^{n+1},\u_p^{n+1},p_{p}^{n+1}) \in \X_{p,h} \times \V_{p,h} \times W_{p,h}$ such that for all $(\bxi_p,\bv_p,w_p) \in \X_{p,h}\times\V_{p,h}\times W_{p,h}$,
\begin{align}
  &\big( \rho_p \dtt \bbeta_{p}^{n+1},\bxi_{p}\big)_{\O_p}
  + a^e_{p}(\bbeta_{p}^{n+1},\bxi_{p})
  + a^d_{p}(\u_{p}^{n+1},\bv_{p})
  + \alpha b_p(\bxi_{p},p_{p}^{n+1})
  + b_p(\bv_{p},p_{p}^{n+1})
  \nonumber \\
  & \qquad\qquad  + \gamma_p\<(\u_p^{n+1} + \dt\bbeta_p^{n+1})\cdot\n_p,
        (\bv_p + \bxi_p)\cdot\n_p\>_{\Gamma_{fp}}
  + \gamma_p\<\dt\bbeta_p^{n+1}\cdot\btau_p,\bxi_p\cdot\btau_p\>_{\Gamma_{fp}}
  \nonumber \\
  & \qquad = (\f_{p},\bxi_{p})_{\O_p} + \<(-\gamma_p\u_f^{n+1} + \bs_f^{n+1}\n_f)\cdot\n_f,
  (\bv_p + \bxi_p)\cdot\n_p\>_{\Gamma_{fp}} \nonumber \\
  & \qquad\qquad -\gamma_p\<\u_f^{n+1}\cdot\btau_f,\bxi_p\cdot\btau_p\>_{\Gamma_{fp}}
  +(1- \gamma_p\gamma_{BJS})\<(\bs_f^{n+1}\n_f)\cdot\btau_f,\bxi_p\cdot\btau_p\>_{\Gamma_{fp}}, \\
  & s_0 (\dt p_{p}^{n+1},w_{p})_{\O_p} - \alpha b_p(\dt\bbeta_{p}^{n+1},w_{p}) - b_p(\u_{p}^{n+1},w_{p}) = (q_{p},w_{p})_{\O_p},
\end{align}
which corresponds to the {finite element approximation} of the weak formulation of problem \eqref{eq:biot1}--\eqref{eq:biot2} supplemented with
the following Robin boundary conditions on $\Gamma_{fp}$:
\begin{subequations}\label{biot-robin-mu}
\begin{align}
& \gamma_p(\u_p^{n+1} + \dt\bbeta_p^{n+1})\cdot\n_p + (\bs_p^{n+1} \n_p)\cdot\n_p =
- \gamma_p \u_f^{n+1}\cdot\n_f + (\bs_f^{n+1} \n_f)\cdot\n_f, \label{biot-robin-mu-1}\\
& \gamma_p \dt \bbeta_p^{n+1}\cdot\btau_{p} + (\bs_p^{n+1}\n_p)\cdot\btau_{p} =
- \gamma_p \u_f^{n+1}\cdot\btau_{f}
+
(1- \gamma_p\gamma_{BJS})(\bs_f^{n+1}\n_f)\cdot\btau_{f}, \label{biot-robin-mu-2}\\
& \gamma_p (\u_p^{n+1} + \dt\bbeta_p^{n+1})\cdot\n_p - p_p^{n+1} =
-\gamma_p \u_f^{n+1}\cdot\n_f + (\bs_f^{n+1} \n_f)\cdot\n_f. \label{biot-robin-mu-3}
\end{align}
\end{subequations}
It is obtained by applying the backward Euler time discretization to \eqref{semi-3}--\eqref{semi-4} and using the interface quantities from the Stokes region obtained in the previous Stokes solve.

{We note that the above algorithm involves $\bs_f\n_f$ and $\bs_p\n_p$ on $\Gamma_{fp}$, which in the continuous setting may not belong to $L^2(\Gamma_{fp})$, while in the finite element method they require postprocessing from the primary variables and may result in loss of accuracy. Moreover, the algorithm is difficult to analyze in this form. Motivated by the Robin-Robin iterative algorithm for the coupled Stokes-Darcy flow problem developed in \cite{Disc-Quart-Valli-2007}, we consider the following modified algorithm.}

{
Let $\T_{fp,h}$ be the partition of $\Gamma_{fp}$ obtained from the trace of $\T_{f,h}$.
Let
\begin{equation}\label{Lambda-h-defn}
  {\bL_{h} := \V_{f,h}|_{\Gamma_{fp}}.}
\end{equation}
{For $0 \le n \le N$, we introduce an auxiliary interface variable $\bmu^n= (\mu_n^n,\mu_\tau^n) \in \bL_h$, 
where  
$\mu_n^n  = \bmu^n\cdot\n_f$ and $\mu_\tau^n = \bmu^n\cdot\btau_f$ are used to approximate the Robin data on $\Gamma_{fp}$.} In particular,
\begin{subequations}\label{mu-approx}
\begin{align}
& \mu_n^n \simeq -\gamma_f \left(\dt\bbeta_p^n + \u_p^{n}\right)\cdot\n_p
+ (\bs_p^{n} \n_p)\cdot\n_p, \label{mu-n-approx} \\
& \mu_{\tau}^{n} \simeq - \gamma_f \dt\bbeta_p^n\cdot\btau_{p}
+ (\bs_p^{n}\n_p)\cdot\btau_{p} - \gamma_f \gamma_{BJS} (\bs_f^{n}\n_f)\cdot\btau_{f}. \label{mu-t-approx}
\end{align}
\end{subequations}
}
{The Robin-Robin partitioned algorithm is as follows.}

\medskip
Let, for all $\bchi \in \bL_h$,
\begin{align*}
    \<\mu_n^0,\chi_n\>_{\Gamma_{fp}} &= \<-\gamma_f(\dt\bbeta_p^0+\u_p^0)\cdot\n_p + \bs_p^0\n_p\cdot\n_p,\chi_n\>_{\Gamma_{fp}}, \cl
    \<\mu_{\tau}^0,\chi_\tau\>_{\Gamma_{fp}}
    &= \<-\gamma_f\dt\bbeta_p^0\cdot\tau_{p}+\bs_p^0\n_p\cdot\tau_{p}  -\gamma_f\gamma_{BJS}(\bs_f^{0}\n_f)\cdot\btau_{f},\chi_\tau\>_{\Gamma_{fp}}. \el
  \end{align*}
  
In the above, and in the following, we write the two equations separately for clarity, with the understanding that they represent a single equation for $\bmu^0$, i.e., the sum of the two equations holds.

For $n = 0,\dots,N-1$, solve
\begin{enumerate}
\item Stokes problem: find $(\u_{f}^{n+1}, p_{f}^{n+1}) \in \V_{f,h}\times W_{f,h}$ such that for all $(\bv_f,w_f) \in \V_{f,h}\times W_{f,h}$,
\begin{align}
  & \left(\rho_f {\dt \u_{f}^{n+1}},\bv_{f}\right)_{\O_f} + a_{f}(\u_{f}^{n+1},\bv_{f}) + b_f(\bv_{f},p_{f}^{n+1}) + \gamma_f\<\u_f^{n+1}\cdot\n_f,\bv_f\cdot\n_f\>_{\Gamma_{fp}}
  \nonumber \\
  & \qquad  +  \gamma_f \<\u_f^{n+1}\cdot\btau_{f},\bv_f\cdot\btau_{f}\>_{\Gamma_{fp}} = (\f_{f},\bv_{f})_{\O_f} + \<\mu_n^n,\bv_f\cdot\n_{f}\>_{\Gamma_{fp}} 
   + \<\mu_{\tau}^n,\bv_f\cdot\btau_{f}\>_{\Gamma_{fp}}\label{stokes-weak-1}\\
 & - b_f(\u^{n+1}_{f},w_{f}) = (q_{f},w_{f})_{\O_f}, \label{stokes-weak-2}
\end{align}
which corresponds to the weak imposition of the Robin boundary conditions on $\Gamma_{fp}$:
\begin{subequations}\label{stokes-robin-mu-modif}
\begin{align}
  & \gamma_f\u_f^{n+1}\cdot\n_f + (\bs_f^{n+1} \n_f)\cdot\n_f = \mu_n^n, 
  \label{stokes-robin-mu-1-modif} \\
  &\gamma_f\u_f^{n+1}\cdot\btau_{f}
 +  (\bs_f^{n+1}\n_f)\cdot\btau_{f}
  = \mu_{\tau}^n. 
  \label{stokes-robin-mu-2-modif}
\end{align}
\end{subequations}

\item Biot problem: find $(\bbeta_{p}^{n+1},\u_p^{n+1},p_{p}^{n+1}) \in \X_{p,h},\V_{p,h},W_{p,h}$ such that for all
$(\bxi_p,\bv_p,w_p) \in \X_{p,h}\times\V_{p,h}\times W_{p,h}$,
\begin{align}
  &\big(\rho_p\dtt\bbeta_{p}^{n+1},\bxi_{p}\big)_{\O_p}
  + a^e_{p}(\bbeta_{p}^{n+1},\bxi_{p})
  + a^d_{p}(\u_{p}^{n+1},\bv_{p})
  + \alpha b_p(\bxi_{p},p_{p}^{n+1})
  + b_p(\bv_{p},p_{p}^{n+1})
  \nonumber \\
  & \qquad\qquad  + \gamma_p\<(\u_p^{n+1} + \dt\bbeta_p^{n+1})\cdot\n_p,
        (\bv_p + \bxi_p)\cdot\n_p\>_{\Gamma_{fp}}
  + \gamma_p\<\dt\bbeta_p^{n+1}\cdot\btau_p,\bxi_p\cdot\btau_p\>_{\Gamma_{fp}}
  \nonumber \\
  & \qquad = (\f_{p},\bxi_{p})_{\O_p}
  + \<\mu_n^n - (\gamma_p + \gamma_f)\u_f^{n+1}\cdot\n_f,
  (\bv_p + \bxi_p)\cdot\n_p\>_{\Gamma_{fp}} \nonumber \\
  & \qquad\qquad + \left\<\mu_{\tau}^n
- (\gamma_p + \gamma_f)\u_f^{n+1}\cdot\btau_f -\gamma_p\gamma_{BJS}(\bs_f^{n+1}\n_f)\cdot\btau_{f},
\bxi_p\cdot\btau_p\right\>_{\Gamma_{fp}}, \label{biot-weak-1} \\
& s_0(\dt p_{p}^{n+1},w_{p})_{\O_p} - \alpha b_p(\dt\bbeta_{p}^{n+1},w_{p}) - b_p(\u_{p}^{n+1},w_{p}) = (q_{p},w_{p})_{\O_p}, \label{biot-weak-2}
\end{align}
which corresponds to the weak imposition of the Robin boundary conditions on $\Gamma_{fp}$:
\begin{subequations}\label{biot-robin-mu-modif}
\begin{align}
  & \gamma_p(\u_p^{n+1} + \dt\bbeta_p^{n+1})\cdot\n_p + (\bs_p^{n+1} \n_p)\cdot\n_p
   = \mu_n^n - (\gamma_p + \gamma_f)\u_f^{n+1}\cdot\n_f
  \nonumber \\
  & \qquad = - \gamma_p \u_f^{n+1}\cdot\n_f + (\bs_f^{n+1} \n_f)\cdot\n_f,
  \label{biot-robin-mu-1-modif}\\
& \gamma_p \dt \bbeta_p^{n+1}\cdot\btau_{p} + (\bs_p^{n+1}\n_p)\cdot\btau_{p}
= \mu_{\tau}^n
- (\gamma_p + \gamma_f)\u_f^{n+1}\cdot\btau_f -\gamma_p \gamma_{BJS}(\bs_f^{n+1}\n_f)\cdot\btau_{f}
\nonumber \\
& \qquad = - \gamma_p \u_f^{n+1}\cdot\btau_{f}+(1- \gamma_p\gamma_{BJS})(\bs_f^{n+1}\n_f)\cdot\btau_{f},
\label{biot-robin-mu-2-modif}\\
& \gamma_p (\u_p^{n+1} + \dt\bbeta_p^{n+1})\cdot\n_p - p_p^{n+1}
 = \mu_n^n - (\gamma_p + \gamma_f)\u_f^{n+1}\cdot\n_f
\nonumber \\
& \qquad = -\gamma_p \u_f^{n+1}\cdot\n_f + (\bs_f^{n+1} \n_f)\cdot\n_f,
\label{biot-robin-mu-3-modif}
\end{align}
\end{subequations}
  where the second equalities in \eqref{biot-robin-mu-1-modif} and \eqref{biot-robin-mu-3-modif} 
  are obtained from \eqref{stokes-robin-mu-1-modif} and the second equality in \eqref{biot-robin-mu-2-modif} is obtained from \eqref{stokes-robin-mu-2-modif}.

\item Update: for all $\bchi \in \bL_h$,
%
\begin{subequations}\label{mu-defn}    
\begin{align}
  \<\mu_n^{n+1},\chi_n\>_{\Gamma_{fp}} & = \big\<\mu_n^n -(\gamma_f + \gamma_p)\big((\dt \bbeta_p^{n+1}
  + \u_p^{n+1})\cdot\n_p + \u_f^{n+1}\cdot\n_f\big),\chi_n\big\>_{\Gamma_{fp}},
  \label{mu-n-defn} \\
  \<\mu_{\tau}^{n+1},\chi_\tau\>_{\Gamma_{fp}}  & = \big\<\mu_{\tau}^n -(\gamma_f + \gamma_p)\big(
  \dt \bbeta_p^{n+1} \cdot\btau_p + \u_f^{n+1}\cdot\btau_f
  + \gamma_{BJS} (\bs_f^{n+1}\n_f)\cdot\btau_{f}\big),\chi_\tau\big\>_{\Gamma_{fp}}.
  \label{mu-t-defn} 
\end{align}
\end{subequations}
\end{enumerate}

The above equations, combined with the first equalities in \eqref{biot-robin-mu-1-modif} and \eqref{biot-robin-mu-2-modif}, imply that
\begin{subequations}\label{mu-approx-np1}    
  \begin{align}
     & \mu_n^{n+1} \simeq -\gamma_f \left(\dt \bbeta_p^{n+1} + \u_p^{n+1}\right)\cdot\n_p
  + (\bs_p^{n+1} \n_p)\cdot\n_p,
  \label{mun-approx-np1}\\
  & \mu_{\tau}^{n+1} \simeq  - \gamma_f \dt \bbeta_p^{n+1}\cdot\btau_{p}
  + (\bs_p^{n+1}\n_p)\cdot\btau_{p} - \gamma_f \gamma_{BJS} (\bs_f^{n+1}\n_f)\cdot\btau_{f}.
  \label{mut-approx-np1}
  \end{align}
\end{subequations}

\begin{remark}
    The second equalities in \eqref{biot-robin-mu-modif} indicate that the Robin conditions \eqref{biot-robin-mu}  are weakly imposed in the Biot sub-problem. The expressions \eqref{mun-approx-np1}, combined with \eqref{stokes-robin-mu-modif}  indicate that the Robin conditions \eqref{stokes-robin-mu} are weakly imposed in the Stokes sub-problem. We emphasize that \eqref{stokes-robin-mu-modif}, \eqref{biot-robin-mu-modif}, and \eqref{mu-approx-np1} are not used in the forthcoming analysis. Only \eqref{mu-defn} is used.
    \end{remark}
  \begin{remark}
Assuming that  $\f_f \in (L^2(\Omega_f))^d$, $q_f \in L^2(\Omega_f)$, $\f_p \in (L^2(\Omega_p))^d$, and $q_p \in L^2(\Omega_p)$, the well-posedness of the subdomain problems \eqref{stokes-weak-1}--\eqref{stokes-weak-2} and
\eqref{biot-weak-1}--\eqref{biot-weak-2} can be shown using standard techniques for the Stokes and Biot systems, respectively, using the classical Babuska-Brezzi theory \cite{boffi2013mixed}. We emphasize the inclusion of the term $\|\bv\cdot\n_p\|_{L^2(\Gamma_{fp})}$ in the norm of $\V_p$, cf. \eqref{Vp-norm}. Control of this term is obtained from the term $\gamma_p\<(\u_p^{n+1} + \dt\bbeta_p^{n+1})\cdot\n_p,(\bv_p + \bxi_p)\cdot\n_p\>_{\Gamma_{fp}}$ in \eqref{biot-weak-1}. More precisely, this gives control on $\|(\u_p^{n+1} + \bbeta_p^{n+1})\cdot\n_p\|_{L^2(\Gamma_{fp})}$. Then, the bound on $\|\u_p^{n+1}\cdot\n_p\|_{L^2(\Gamma_{fp})}$ follows from the triangle inequality, the trace inequality, and the bound on $\|\bbeta_p^{n+1}\|_{H^1(\Omega_p)}$.
\end{remark}

\section{Stability analysis}\label{sec:stab}

In the stability and error analysis we consider the case the case $\gamma_{BJS} = 0$, {which corresponds to a no-slip condition as it is typical in fluid-structure interaction}. Moreover,
we assume that $\gamma_f = \gamma_p = \gamma$. Also, for simplicity, in the stability analysis presented in this section we consider no forcing terms, i.e., $\f_f = \f_p = \boldsymbol{0}$ and $q_f = q_p = 0$.
In the analysis, we will use the identities
\begin{align}
& ab = \frac{1}{4}\left((a+b)^2 - (a-b)^2\right), \label{identity-A1}\\
& a(a-b) = \frac{1}{2}(a^2 - b^2 + (a-b)^2). \label{identity-A2}
\end{align}

We note that, due to the choice of $\bL_h$ \eqref{Lambda-h-defn}, \eqref{mu-defn} implies
\begin{align}
  \begin{pmatrix}\mu_n^{n+1}\\ \mu_{\tau}^{n+1} \end{pmatrix}  & =
  \begin{pmatrix} \mu_n^n\\ \mu_{\tau}^{n} \end{pmatrix} 
  - 2\gamma \left( P_{\bL_h} \begin{pmatrix} (\dt \bbeta_p^{n+1} + \u_p^{n+1})\cdot\n_p \\
    \dt \bbeta_p^{n+1} \cdot\btau_p \end{pmatrix}
  + \begin{pmatrix}\u_f^{n+1}\cdot\n_f \\\u_f^{n+1}\cdot\btau_f \end{pmatrix} \right)
  \label{mu-strong}  
\end{align}
where $P_{\bL_h}:(L^2(\Gamma_{fp}))^d \to \bL_h$ is the $L^2$-orthogonal projection satisfying, for any $\bvarphi \in (L^2(\Gamma_{fp}))^d$,
\begin{equation}\label{L2proj-lambda}
\big\<P_{\bL_h}\bvarphi - \bvarphi,\bchi\big\>_{\Gamma_{fp}} = 0 \quad \forall \bchi \in \bL_h.
\end{equation}

A scaling argument similar to the one in \cite[Lemma~5.1]{gvy2014}
shows that that $P_{\bL_h}$ is stable in $\|\cdot\|_{H^{1/2}(\Gamma_{fp})}$:
\begin{equation}\label{H12-stable}
\|P_{\bL_h}\bvarphi\|_{H^{1/2}(\Gamma_{fp})} \le C_\Lambda \|\bvarphi\|_{H^{1/2}(\Gamma_{fp})} \quad \forall \bvarphi \in (H^{1/2}(\Gamma_{fp}))^d.
\end{equation}  

We define the following energy terms, which will be used in the stability estimate.
Let
\begin{align}
  & \mathcal{E}^n = \frac{\rho_f}{2}\|\u_f^n\|_{L^2(\O_f)}^2 + \frac{\rho_p}{2}\|\dt\bbeta_{p}^n\|_{L^2(\O_p)}^2 + \frac{1}{2}\|\bbeta_p^n\|_e^2
  + \frac{s_0}{2}\|p_{p}^n\|_{L^2(\O_p)}^2
  \nonumber \\
  & \qquad\qquad
  + \frac{\Delta t}{4\gamma}\|\mu_n^n\|_{L^2(\Gamma_{fp})}^2
  + \frac{\Delta t}{4\gamma}\|\mu_\tau^n\|_{L^2(\Gamma_{fp})}^2, \label{En-energy} \\
  & \mathcal{D}^n = \|\u_{f}^n\|_f^2 + \|\u_{p}^n\|_d^2, \label{Dn-energy}\\
  & \mathcal{S}^n = \frac{\rho_p}{2}\|\dt\bbeta_{p}^n-\dt\bbeta_{p}^{n-1}\|_{L^2(\O_p)}^2 
+\frac{\rho_f}{2}\|\u_f^n-\u_f^{n-1}\|_{L^2(\O_f)}^2 
+ \frac{1}{2} \|\bbeta_{p}^n-\bbeta_{p}^{n-1}\|_e^2
\nonumber \\
& \qquad\qquad
+ \frac{s_0}{2} \|p_{p}^n-p_{p}^{n-1}\|_{L^2(\O_p)}^2, \label{Sn-energy}
\end{align}
where
\begin{gather*}
\|\bv_f\|_f^2 := a_f(\bv_f,\bv_f) = 2\mu_f\|\D(\bv_{f})\|_{\O_{f}}^2, \quad
\|\bv_p\|_d^2 = a^d_{p}(\bv_p,\bv_p) = \mu_f\|K^{-1/2}\bv_p\|_{\O_p}^2, \\
\|\bxi_p\|_e^2 := a^e_{p}(\bxi_p,\bxi_p) = \|\mu_p^{1/2}\D(\bxi_p)\|_{\O_p}^2
+ \|\lambda_p^{1/2}\div\bxi_p\|_{\O_p}^2.
\end{gather*}
The assumptions on the coefficients imply that there exist positive constants $c_f$, $c_d$, and $c_e$ such that
\begin{equation}\label{coercivity}
  \begin{split}
& \|\bv_f\|_f^2 \ge c_f\|\bv_f\|_{H^1(\O_f)}^2 \ \forall \bv_f \in H^1(\O_f), \quad
\|\bv_p\|_d^2 \ge c_d\|\bv_p\|_{L^2(\O_p)}^2 \ \forall \bv_p \in L^2(\O_p), \\
& \hskip 3.5cm \|\bxi_p\|_e^2 \ge c_e\|\bxi_p\|_{H^1(\O_p)}^2 \ \forall \bxi_p \in H^1(\O_p),
\end{split}
\end{equation}
where Korn's and Poincar\'e inequalities have been utilized in the first and third inequalities, using that $|\Gamma_f^D| > 0$.

{
\begin{theorem}\label{stability}
The following energy inequality holds for the algorithm given in \eqref{stokes-weak-1}--\eqref{stokes-weak-2}, \eqref{biot-weak-1}--\eqref{biot-weak-2}, and \eqref{mu-defn}:
\begin{equation}\label{energy}
\mathcal{E}^N + \Delta t \sum_{n=1}^N \mathcal{D}^n + \sum_{n=1}^N \mathcal{S}^n \le \mathcal{E}^0.
\end{equation}
\end{theorem}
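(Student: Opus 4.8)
The plan is to establish a one-step discrete energy identity from which \eqref{energy} follows by summation over $n$. First I would test the Stokes subproblem \eqref{stokes-weak-1}--\eqref{stokes-weak-2} with $(\bv_f,w_f)=\Delta t\,(\u_f^{n+1},p_f^{n+1})$ and the Biot subproblem \eqref{biot-weak-1}--\eqref{biot-weak-2} with $(\bxi_p,\bv_p,w_p)=\Delta t\,(\dt\bbeta_p^{n+1},\u_p^{n+1},p_p^{n+1})$, and add the four resulting scalar identities. In the interior the mixed terms drop: $\Delta t\,b_f(\u_f^{n+1},p_f^{n+1})$ from the momentum equation cancels against the continuity equation \eqref{stokes-weak-2} (recall $q_f=0$), and the two $b_p$ terms cancel against \eqref{biot-weak-2} (recall $q_p=0$). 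The remaining volumetric contributions are put in telescoping form via the identity \eqref{identity-A2}: in $(\cdot,\cdot)_{\O_f}$ applied to $\rho_f\Delta t\,(\dt\u_f^{n+1},\u_f^{n+1})_{\O_f}$, in $(\cdot,\cdot)_{\O_p}$ applied to $\rho_p\Delta t\,(\dtt\bbeta_p^{n+1},\dt\bbeta_p^{n+1})_{\O_p}$ and to $s_0\Delta t\,(\dt p_p^{n+1},p_p^{n+1})_{\O_p}$, and in the bilinear form $a^e_p(\cdot,\cdot)$ applied to $\Delta t\,a^e_p(\bbeta_p^{n+1},\dt\bbeta_p^{n+1})$. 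This produces exactly the interior part of $\mathcal{E}^{n+1}-\mathcal{E}^n$ together with the full increment $\mathcal{S}^{n+1}$, while $\Delta t\,a_f(\u_f^{n+1},\u_f^{n+1})+\Delta t\,a^d_p(\u_p^{n+1},\u_p^{n+1})=\Delta t\,\mathcal{D}^{n+1}$.

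The heart of the argument, and the only step I expect to require care, is the interface accounting. Testing the Stokes equations contributes $\gamma\Delta t\,\|\u_f^{n+1}|_{\Gamma_{fp}}\|_{L^2(\Gamma_{fp})}^2$ on the left and $\Delta t\,\langle\bmu^n,\u_f^{n+1}|_{\Gamma_{fp}}\rangle_{\Gamma_{fp}}$ on the right; testing the Biot equations (with $\gamma_{BJS}=0$, so the $\bs_f^{n+1}\n_f$ interface term vanishes) contributes $\gamma\Delta t\,\|\bzeta^{n+1}\|_{L^2(\Gamma_{fp})}^2$ on the left, where $\bzeta^{n+1}$ is the interface field whose normal and tangential components are $(\dt\bbeta_p^{n+1}+\u_p^{n+1})\cdot\n_p$ and $\dt\bbeta_p^{n+1}\cdot\btau_p$, and $\Delta t\,\langle\bmu^n-2\gamma\,\u_f^{n+1}|_{\Gamma_{fp}},\bzeta^{n+1}\rangle_{\Gamma_{fp}}$ on the right. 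Since $\bmu^n\in\bL_h$ and $\u_f^{n+1}|_{\Gamma_{fp}}\in\bL_h$, the orthogonality \eqref{L2proj-lambda} lets me replace $\bzeta^{n+1}$ by $P_{\bL_h}\bzeta^{n+1}$ in these pairings, so the total interface right-hand side equals $\Delta t\,\langle\bmu^n,\u_f^{n+1}|_{\Gamma_{fp}}+P_{\bL_h}\bzeta^{n+1}\rangle_{\Gamma_{fp}}-2\gamma\Delta t\,\langle\u_f^{n+1}|_{\Gamma_{fp}},P_{\bL_h}\bzeta^{n+1}\rangle_{\Gamma_{fp}}$. By \eqref{mu-strong} the first slot equals $(\bmu^n-\bmu^{n+1})/(2\gamma)$; applying \eqref{identity-A2} in $\langle\cdot,\cdot\rangle_{\Gamma_{fp}}$ to $\frac{\Delta t}{2\gamma}\langle\bmu^n,\bmu^n-\bmu^{n+1}\rangle_{\Gamma_{fp}}$ produces $\frac{\Delta t}{4\gamma}(\|\bmu^n\|_{L^2(\Gamma_{fp})}^2-\|\bmu^{n+1}\|_{L^2(\Gamma_{fp})}^2)+\gamma\Delta t\,\|\u_f^{n+1}|_{\Gamma_{fp}}+P_{\bL_h}\bzeta^{n+1}\|_{L^2(\Gamma_{fp})}^2$, and expanding that last square cancels the cross term $-2\gamma\Delta t\,\langle\u_f^{n+1}|_{\Gamma_{fp}},P_{\bL_h}\bzeta^{n+1}\rangle_{\Gamma_{fp}}$, leaving $\frac{\Delta t}{4\gamma}(\|\bmu^n\|^2-\|\bmu^{n+1}\|^2)+\gamma\Delta t\,\|\u_f^{n+1}|_{\Gamma_{fp}}\|^2+\gamma\Delta t\,\|P_{\bL_h}\bzeta^{n+1}\|^2$. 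The first squared-norm here cancels the Stokes interface left-hand term, the $\bmu$-differences complete the interface part of $\mathcal{E}^{n+1}-\mathcal{E}^n$, and since $P_{\bL_h}$ is an $L^2(\Gamma_{fp})$-orthogonal projection we have $\|P_{\bL_h}\bzeta^{n+1}\|\le\|\bzeta^{n+1}\|$, so the Biot interface left-hand term absorbs $\gamma\Delta t\,\|P_{\bL_h}\bzeta^{n+1}\|^2$ and leaves the nonnegative surplus $\gamma\Delta t\,(\|\bzeta^{n+1}\|^2-\|P_{\bL_h}\bzeta^{n+1}\|^2)$, which is simply discarded.

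Combining the interior and interface computations gives $\mathcal{E}^{n+1}-\mathcal{E}^n+\mathcal{S}^{n+1}+\Delta t\,\mathcal{D}^{n+1}\le0$ for every $n=0,\dots,N-1$; the $n=0$ step is consistent because $\dt\bbeta_p^0=P_X\u_{s,0}$ and $\bmu^0$ are precisely the quantities entering $\mathcal{E}^0$. Summing this inequality over $n=0,\dots,N-1$ and telescoping yields \eqref{energy}. I would stress that no relation between $\Delta t$ and $h$ is needed and the only property of $P_{\bL_h}$ used is $L^2(\Gamma_{fp})$-contractivity, so the estimate is unconditional; the one genuinely delicate point is the interface bookkeeping above, where recognizing that the update rule \eqref{mu-defn}/\eqref{mu-strong} is exactly what cancels the cross term and makes the interface energy telescope is the key, the remainder being routine discrete energy manipulation.
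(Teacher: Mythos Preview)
Your proposal is correct and takes essentially the same approach as the paper: test with $(\u_f^{n+1},p_f^{n+1})$ and $(\dt\bbeta_p^{n+1},\u_p^{n+1},p_p^{n+1})$, use \eqref{identity-A2} on the time-difference volumetric terms, handle the interface via the projection $P_{\bL_h}$ and the update \eqref{mu-strong}, drop the nonnegative projection residual, and sum over $n$. The only cosmetic difference is that the paper applies \eqref{identity-A1} separately to the Stokes and Biot interface terms, producing the intermediate quantity $\|\bmu^n-2\gamma\u_f^{n+1}\|_{L^2(\Gamma_{fp})}^2$ that cancels upon addition, whereas you first combine the two interface right-hand sides and then apply \eqref{identity-A2} to $\langle\bmu^n,\bmu^n-\bmu^{n+1}\rangle_{\Gamma_{fp}}$; these are equivalent rearrangements of the same computation.
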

}

\begin{proof}
Taking $\bv_f = \u_f^{n+1}$ and $w_f = p_f^{n+1}$
in \eqref{stokes-weak-1}--\eqref{stokes-weak-2}, we obtain
\begin{align}
& \frac{\rho_f}{\Delta t}\big(\u_f^{n+1}-\u^{n}_{f}, \u^{n+1}_{f}\big)_{\O_f}+ a_{f}(\u_{f}^{n+1},\u_f^{n+1}) \nonumber \\
  & \quad =
  \frac{1}{\gamma}\left\<\mu_n^n - \gamma\u_f^{n+1}\cdot\n_f,
  \gamma\u_f^{n+1}\cdot\n_f\right\>_{\Gamma_{fp}}
  + \frac{1}{\gamma}\left\<\mu_{\tau}^n - \gamma\u_f^{n+1}\cdot\btau_f,
 \gamma\u_f^{n+1}\cdot\btau_f\right\>_{\Gamma_{fp}}\nonumber \\
& \quad = \frac{1}{4\gamma}\int_{\Gamma_{fp}} (\mu_n^n)^2
- \frac{1}{4\gamma}\int_{\Gamma_{fp}} (\mu_n^n - 2 \gamma \u_f^{n+1}\cdot\n_f)^2
+ \frac{1}{4\gamma}\int_{\Gamma_{fp}} (\mu_{\tau}^n)^2
- \frac{1}{4\gamma}\int_{\Gamma_{fp}} (\mu_{\tau}^n - 2 \gamma \u_f^{n+1}\cdot\btau_f)^2,\label{stokes-energy}
\end{align}
where we used \eqref{identity-A1} in the last equality.
Taking 
$\bv_p = \u_p^{n+1}$, $w_p = p_p^{n+1}$, and $\bxi_p = \dt\bbeta_p^{n+1}$ in
\eqref{biot-weak-1}--\eqref{biot-weak-2}, we obtain
\begin{align}
  & \frac{\rho_p}{\Delta t}\big(\dt\bbeta_{p}^{n+1}-\dt\bbeta_{p}^n, \dt\bbeta_{p}^{n+1}\big)_{\O_p}
  + \frac{1}{\Delta t}a^e_{p}(\bbeta_{p}^{n+1},\bbeta_{p}^{n+1}-\bbeta_{p}^n)
  + \frac{1}{\Delta t} s_0 (p_{p}^{n+1} - p_{p}^n,p_p^{n+1})_{\O_p}
  + a^d_{p}(\u_{p}^{n+1},\u_{p}^{n+1})
  \nonumber \\
  &\quad = \frac{1}{\gamma}\left\<\mu_n^n
  - \gamma(\u_p^{n+1} + \dt\bbeta_p^{n+1})\cdot\n_p
- 2 \gamma\u_f^{n+1}\cdot\n_f,
\gamma(\u_p^{n+1} + \dt\bbeta_p^{n+1})\cdot\n_p\right\>_{\Gamma_{fp}}\nonumber \\
& \quad\qquad + \frac{1}{\gamma}\<\mu_{\tau}^n
  - \gamma\dt\bbeta_p^{n+1}\cdot\btau_p - 2 \gamma\u_f^{n+1}\cdot\btau_f,
  \gamma\dt\bbeta_p^{n+1}\cdot\btau_p\>_{\Gamma_{fp}} \nonumber \\
  &\quad
{
  =  \frac{1}{\gamma} \left\<\begin{pmatrix} \mu_n^n\\ \mu_{\tau}^{n} \end{pmatrix}
  - \gamma P_{\bL_h} \begin{pmatrix} (\dt \bbeta_p^{n+1} + \u_p^{n+1})\cdot\n_p \\
    \dt \bbeta_p^{n+1} \cdot\btau_p \end{pmatrix}
  - 2\gamma\begin{pmatrix}\u_f^{n+1}\cdot\n_f \\\u_f^{n+1}\cdot\btau_f \end{pmatrix},
  \gamma P_{\bL_h} \begin{pmatrix} (\dt \bbeta_p^{n+1} + \u_p^{n+1})\cdot\n_p \\
    \dt \bbeta_p^{n+1} \cdot\btau_p \end{pmatrix}
  \right\>_{\Gamma_{fp}} } \nonumber \\
& \quad\qquad
{- \gamma \left\<
\begin{pmatrix} (\dt \bbeta_p^{n+1} + \u_p^{n+1})\cdot\n_p \\
  \dt \bbeta_p^{n+1} \cdot\btau_p \end{pmatrix}
- P_{\bL_h} \begin{pmatrix} (\dt \bbeta_p^{n+1} + \u_p^{n+1})\cdot\n_p \\
  \dt \bbeta_p^{n+1} \cdot\btau_p \end{pmatrix}, \right. } \nonumber\\
& \qquad\qquad\qquad
{\left.
\begin{pmatrix} (\dt \bbeta_p^{n+1} + \u_p^{n+1})\cdot\n_p \\
  \dt \bbeta_p^{n+1} \cdot\btau_p \end{pmatrix}
- P_{\bL_h} \begin{pmatrix} (\dt \bbeta_p^{n+1} + \u_p^{n+1})\cdot\n_p \\
  \dt \bbeta_p^{n+1} \cdot\btau_p \end{pmatrix}
\right\>_{\Gamma_{fp}}, }
  \label{biot-energy}
    \end{align}
where we used \eqref{L2proj-lambda} in the last equality.
Then, using \eqref{identity-A1} and \eqref{mu-strong}, and dropping the last term in
\eqref{biot-energy}, we obtain
\begin{align}
   & \frac{\rho_p}{\Delta t}\big(\dt\bbeta_{p}^{n+1}-\dt\bbeta_{p}^n, \dt\bbeta_{p}^{n+1}\big)_{\O_p}
  + \frac{1}{\Delta t}a^e_{p}(\bbeta_{p}^{n+1},\bbeta_{p}^{n+1}-\bbeta_{p}^n)
  + \frac{1}{\Delta t} s_0 (p_{p}^{n+1} - p_{p}^n,p_p^{n+1})_{\O_p}
  + a^d_{p}(\u_{p}^{n+1},\u_{p}^{n+1}) 
\nonumber \\
  & \quad \le  \frac{1}{4\gamma}\int_{\Gamma_{fp}} (\mu_n^n - 2 \gamma \u_f^{n+1}\cdot\n_f)^2
  - \frac{1}{4\gamma}\int_{\Gamma_{fp}} (\mu_n^{n+1})^2
  + \frac{1}{4\gamma}\int_{\Gamma_{fp}} (\mu_{\tau}^n - 2 \gamma \u_f^{n+1}\cdot\btau_f)^2
  - \frac{1}{4\gamma}\int_{\Gamma_{fp}} (\mu_{\tau}^{n+1})^2. \label{biot-energy-1}
    \end{align}
Summing \eqref{stokes-energy} and \eqref{biot-energy} and using \eqref{identity-A2} gives
\begin{align*}
& \frac{\rho_f}{2\Delta t} (\u_f^{n+1}, \u_f^{n+1})_{\O_f} + \frac{\rho_f}{2\Delta t} (\u_f^{n+1} - \u_f^{n}, \u_f^{n+1} - \u_f^{n})_{\O_f} + a_{f}(\u_{f}^{n+1},\u_f^{n+1}) \\
& \quad \quad + \frac{\rho_p}{2\Delta t}(\dt\bbeta_{p}^{n+1},\dt\bbeta_{p}^{n+1})_{\O_p}+\frac{\rho_p}{2\Delta t}(\dt\bbeta_{p}^{n+1}-\dt\bbeta_{p}^n,\dt\bbeta_{p}^{n+1}-\dt\bbeta_{p}^n)_{\O_p} + a^d_{p}(\u_{p}^{n+1},\u_{p}^{n+1}) \\
 & \quad \quad 
 + \frac{1}{2\Delta t}a^e_{p}(\bbeta_{p}^{n+1},\bbeta_{p}^{n+1})+
 \frac{1}{2\Delta t} a^e_{p}(\bbeta_{p}^{n+1}-\bbeta_{p}^{n},\bbeta_{p}^{n+1}-\bbeta_{p}^{n})+ \frac{s_0}{2\Delta t}(p_{p}^{n+1},p_p^{n+1})_{\O_p}
  \\
 & \quad \quad + \frac{s_0}{2\Delta t} (p_{p}^{n+1}-p_{p}^{n},p_p^{n+1}-p_{p}^{n})_{\O_p} + \frac{1}{4\gamma}\int_{\Gamma_{fp}} (\mu_n^{n+1})^2 + \frac{1}{4\gamma}\int_{\Gamma_{fp}} (\mu_{\tau}^{n+1})^2\\
 &\quad \le \frac{\rho_f}{2\Delta t} (\u_f^{n}, \u_f^{n})_{\O_f}+ \frac{\rho_p}{2\Delta t}(\dt\bbeta_{p}^n,\dt\bbeta_{p}^n)_{\O_p} + \frac{1}{2\Delta t}a^e_{p}(\bbeta_{p}^{n},\bbeta_{p}^{n}) + \frac{s_0}{2\Delta t} (p_{p}^{n},p_p^{n})_{\O_p}\\
 &\quad \quad +\frac{1}{4\gamma}\int_{\Gamma_{fp}} (\mu_n^{n})^2
  + \frac{1}{4\gamma}\int_{\Gamma_{fp}} (\mu_{\tau}^{n})^2.
\end{align*}
Finally, multiplying by $\Delta t$ and summing over $n$ implies
\begin{align*}
  & \frac{\rho_f}{2}(\u_f^N,\u_f^N)_{\O_f}+ \frac{\rho_p}{2}(\dt\bbeta_{p}^N,\dt\bbeta_{p}^N)_{\O_p}+ \frac{1}{2}a^e_{p}(\bbeta_{p}^N,\bbeta_{p}^N)+ \frac{s_0}{2} (p_{p}^N,p_p^N)_{\O_p} \\
& \qquad  + \frac{\Delta t}{4\gamma}\int_{\Gamma_{fp}} (\mu_n^N)^2 
  + \frac{\Delta t}{4\gamma}\int_{\Gamma_{fp}} (\mu_{\tau}^N)^2  
  + \Delta t \sum_{n=0}^{N-1}\Big(a_{f}(\u_{f}^{n+1},\u_f^{n+1}) + a^d_{p}(\u_{p}^{n+1},\u_{p}^{n+1})\Big) \\
& \qquad +\sum_{n=0}^{N-1} 
  \Big(\frac{\rho_p}{2}(\dt\bbeta_{p}^{n+1}-\dt\bbeta_{p}^n,\dt\bbeta_{p}^{n+1}-\dt\bbeta_{p}^n)_{\O_p}
  +\frac{\rho_f}{2}(\u_f^{n+1}-\u_f^{n},\u_f^{n+1}-\u_f^{n})_{\O_f}
  \\
  & \quad\qquad\qquad
  +\frac{1}{2} a^e_{p}(\bbeta_{p}^{n+1}-\bbeta_{p}^{n},\bbeta_{p}^{n+1}-\bbeta_{p}^{n})
  + \frac{s_0}{2} (p_{p}^{n+1}-p_{p}^{n},p_p^{n+1}-p_{p}^{n})_{\O_p}\Big) \\
  & =\frac{\rho_f}{2}(\u_f^{0},\u_f^{0})_{\O_f} + \frac{\rho_p}{2}(\dt\bbeta_{p}^0,\dt\bbeta_{p}^0)_{\O_p}+ \frac{1}{2}a^e_{p}(\bbeta_{p}^{0},\bbeta_{p}^{0}) + \frac{s_0}{2} (p_{p}^{0},p_p^{0})_{\O_p}
 +\frac{\Delta t}{4\gamma}\int_{\Gamma_{fp}} (\mu_n^{0})^2
  + \frac{\Delta t}{4\gamma}\int_{\Gamma_{fp}} (\mu_{\tau}^{0})^2.
\end{align*}
which gives \eqref{energy}.
\end{proof}

Bound \eqref{energy} provides control on $\u_{f}^{n}$, $\u_{p}^{n}$, $p_p^n$, 
$\bbeta_{p}^{n}$, and $\dt\bbeta_{p}^n$. Bound on $\|p_f^n\|_{L^2(\Omega_f)}$ can be obtained using that $\V_{f,h}\times W_{f,h}$ is a stable Stokes pair satisfying the inf-sup condition \cite[Lemma~4.1]{LSY}
\begin{equation}\label{stokes-inf-sup}
\forall w_f \in W_{f,h}, \quad \sup_{\bv_f \in \V_{f,h}: \bv_f|_{\Gamma_{fp} = 0}} \frac{b_f(\bv_f,w_f)}{\|\bv_f\|_{H^1(\Omega_f)}} \ge \beta_f \|w_f\|_{L^2(\Omega_f)}.
\end{equation}
Additionally, the control on $p_p^n$ depends on $s_0$, which in practice can be very small. A bound on $p_p^n$ independent of $s_0$ can be obtained from the discrete Darcy inf-sup condition \cite{boffi2013mixed}
\begin{equation}\label{darcy-inf-sup}
\forall w_p \in W_{p,h}, \quad \sup_{\bv_p \in \V_{p,h}: \bv_p|_{\Gamma_{fp} = 0}} \frac{b_p(\bv_p,w_p)}{\|\bv_p\|_{H({\rm div}; \O_{p})}} \ge \beta_p \|w_p\|_{L^2(\Omega_p)}.
\end{equation}
Furthermore, we note that the $\Delta t$ scaling in the terms $\Delta t \|\mu_n^N\|_{L^2(\Gamma_{fp})}^2$ and $\Delta t \|\mu_\tau^N\|_{L^2(\Gamma_{fp})}^2$ in $\mathcal{E}^N$, cf. \eqref{En-energy}, implies that the stability bound on $\|\mu_n^N\|_{L^2(\Gamma_{fp})}$ and $\|\mu_\tau^N\|_{L^2(\Gamma_{fp})}$ obtained in \eqref{energy} scales like $\sqrt{\Delta t^{-1}}$. A stability bound on $\bmu^n$ that is optimal with respect to $\Delta t$ can be obtained in the norm $\|\cdot\|_{H^{-1/2}(\Gamma_{fp})}$, which is the dual of $\|\cdot\|_{H^{1/2}(\Gamma_{fp})}$. For simplicity, we present the arguments for the quasistatic Stokes model, where the term $\rho_f (\dt \u_{f}^{n+1},\bv_{f})_{\O_f}$ in \eqref{stokes-weak-1} is not present. Let $\widetilde{\mathcal{E}}^n$ be $\mathcal{E}^n$ without the term $\frac{\rho_f}{2}\|\u_f^n\|_{L^2(\O_f)}^2$ and let
$$
\widetilde{\mathcal{D}}^n = \|\u_{f}^n\|_f^2 + \|\u_{p}^n\|_d^2 + \|p_f^{n}\|_{L^2(\Omega_f)}^2 + \|\bmu^{n}\|_{H^{-1/2}(\Gamma_{fp})}^2 + \|p_p^{n}\|_{L^2(\Omega_p)}^2.
$$

\begin{theorem}\label{stability-improved}
The following energy inequality holds for the quasistatic Stokes version of the algorithm given in \eqref{stokes-weak-1}--\eqref{stokes-weak-2}, \eqref{biot-weak-1}--\eqref{biot-weak-2}, and \eqref{mu-defn}:
\begin{equation}\label{energy-improved}
\widetilde{\mathcal{E}}^N + \Delta t \sum_{n=1}^N \widetilde{\mathcal{D}}^n + \sum_{n=1}^N \mathcal{S}^n \le C\widetilde{\mathcal{E}}^0.
\end{equation}
\end{theorem}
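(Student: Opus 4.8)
The plan is to bootstrap the basic energy estimate of Theorem~\ref{stability} by controlling the three additional terms appearing in $\widetilde{\mathcal{D}}^n$. First, the proof of Theorem~\ref{stability} carries over to the quasistatic Stokes model with only notational changes: the inertial term $\rho_f(\dt\u_f^{n+1},\bv_f)_{\O_f}$ and its contributions to $\mathcal{E}^n$ and $\mathcal{S}^n$ are simply absent, so that
\[
\widetilde{\mathcal{E}}^N + \Delta t\sum_{n=1}^N\big(\|\u_f^n\|_f^2 + \|\u_p^n\|_d^2\big) + \sum_{n=1}^N\mathcal{S}^n \le \widetilde{\mathcal{E}}^0.
\]
It then suffices to bound $\Delta t\sum_{n=1}^N\big(\|p_f^n\|_{L^2(\O_f)}^2 + \|p_p^n\|_{L^2(\O_p)}^2 + \|\bmu^n\|_{H^{-1/2}(\Gamma_{fp})}^2\big)$ by $C\widetilde{\mathcal{E}}^0$.

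For the pressures I use the subdomain inf-sup conditions with interface-free test functions. Taking $\bv_f \in \V_{f,h}$ with $\bv_f|_{\Gamma_{fp}} = \boldsymbol{0}$ in \eqref{stokes-weak-1} annihilates all interface and $\bmu$ terms (the inertial term being absent), leaving $b_f(\bv_f,p_f^{n+1}) = -a_f(\u_f^{n+1},\bv_f)$; then \eqref{stokes-inf-sup}, continuity of $a_f$, and \eqref{coercivity} give $\|p_f^{n+1}\|_{L^2(\O_f)} \le C\|\u_f^{n+1}\|_f$. Likewise, taking $\bxi_p = \boldsymbol{0}$, $w_p = 0$, and $\bv_p \in \V_{p,h}$ with $\bv_p\cdot\n_p|_{\Gamma_{fp}} = 0$ in \eqref{biot-weak-1} leaves $b_p(\bv_p,p_p^{n+1}) = -a^d_p(\u_p^{n+1},\bv_p)$, so \eqref{darcy-inf-sup} and continuity of $a^d_p$ give the $s_0$-independent bound $\|p_p^{n+1}\|_{L^2(\O_p)} \le C\|\u_p^{n+1}\|_d$. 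Squaring, multiplying by $\Delta t$, summing over $n$, and invoking the basic estimate controls both pressure sums by $C\widetilde{\mathcal{E}}^0$.

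For the Lagrange multiplier the key tool is a duality argument against the Stokes weak form. Fix $n$ with $0 \le n \le N-1$ and $\bvarphi \in (H^{1/2}(\Gamma_{fp}))^d$. Since $\bmu^n \in \bL_h$, \eqref{L2proj-lambda} gives $\langle\bmu^n,\bvarphi\rangle_{\Gamma_{fp}} = \langle\bmu^n,P_{\bL_h}\bvarphi\rangle_{\Gamma_{fp}}$, and because $\Gamma_f^D$ is not adjacent to $\Gamma_{fp}$ there is a bounded discrete extension $\bv_f \in \V_{f,h}$ of $P_{\bL_h}\bvarphi$ with $\|\bv_f\|_{H^1(\O_f)} \le C\|P_{\bL_h}\bvarphi\|_{H^{1/2}(\Gamma_{fp})} \le C C_\Lambda\|\bvarphi\|_{H^{1/2}(\Gamma_{fp})}$ by \eqref{H12-stable}. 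Decomposing $\langle\bmu^n,P_{\bL_h}\bvarphi\rangle = \langle\mu_n^n,\bv_f\cdot\n_f\rangle + \langle\mu_\tau^n,\bv_f\cdot\btau_f\rangle$ in the orthonormal frame and moving all but the $\bmu$-terms to the right-hand side of \eqref{stokes-weak-1}, the surviving terms $a_f(\u_f^{n+1},\bv_f)$, $b_f(\bv_f,p_f^{n+1})$, $\gamma\langle\u_f^{n+1}\cdot\n_f,\bv_f\cdot\n_f\rangle_{\Gamma_{fp}}$, and $\gamma\langle\u_f^{n+1}\cdot\btau_f,\bv_f\cdot\btau_f\rangle_{\Gamma_{fp}}$ are each bounded by $C(\|\u_f^{n+1}\|_f + \|p_f^{n+1}\|_{L^2(\O_f)})\|\bv_f\|_{H^1(\O_f)}$ using continuity, the trace inequality, and \eqref{coercivity}. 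Taking the supremum over $\bvarphi$ and using the $p_f$ bound of the previous paragraph yields $\|\bmu^n\|_{H^{-1/2}(\Gamma_{fp})} \le C\|\u_f^{n+1}\|_f$ for $0 \le n \le N-1$. For the remaining index $n = N$ — which does not drive any Stokes solve — I instead use the embedding $L^2(\Gamma_{fp}) \hookrightarrow H^{-1/2}(\Gamma_{fp})$ together with the term $\tfrac{\Delta t}{4\gamma}\|\bmu^N\|_{L^2(\Gamma_{fp})}^2 \le \widetilde{\mathcal{E}}^N \le \widetilde{\mathcal{E}}^0$ already present in the basic estimate. Hence $\Delta t\sum_{n=1}^N\|\bmu^n\|_{H^{-1/2}(\Gamma_{fp})}^2 \le \Delta t\sum_{n=0}^{N-1}\|\bmu^n\|_{H^{-1/2}(\Gamma_{fp})}^2 + \Delta t\|\bmu^N\|_{H^{-1/2}(\Gamma_{fp})}^2 \le C\,\Delta t\sum_{n=1}^N\|\u_f^n\|_f^2 + C\widetilde{\mathcal{E}}^0 \le C\widetilde{\mathcal{E}}^0$. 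Adding the pressure and multiplier bounds to the basic estimate gives \eqref{energy-improved}.

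I expect the multiplier step to be the main obstacle. One must combine the $L^2(\Gamma_{fp})$-projection $P_{\bL_h}$ with a \emph{bounded} discrete (e.g.\ discrete-harmonic) extension operator $\bL_h \to \V_{f,h}$ — this is exactly where the non-adjacency of $\Gamma_f^D$ and $\Gamma_{fp}$ and the $H^{1/2}$-stability \eqref{H12-stable} enter — and one must notice that $\bmu^n$ is controlled optimally in $\Delta t$ only at the $N$ levels $0,\dots,N-1$ at which it feeds the Stokes problem, the final level $N$ being absorbed separately through the $L^2$ term already in $\widetilde{\mathcal{E}}^N$. The restriction to the quasistatic Stokes model is essential here and in the $p_f$ estimate: keeping the inertial term $\rho_f(\dt\u_f^{n+1},\bv_f)_{\O_f}$ would leave a factor $\|\dt\u_f^{n+1}\|_{L^2(\O_f)}$ which the basic estimate does not control.
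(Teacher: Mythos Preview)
Your proof is correct and follows essentially the same route as the paper: the quasistatic version of Theorem~\ref{stability}, then the interface-free inf-sup conditions \eqref{stokes-inf-sup} and \eqref{darcy-inf-sup} to control $p_f^{n+1}$ and $p_p^{n+1}$, and finally the duality argument combining $P_{\bL_h}$, its $H^{1/2}$-stability \eqref{H12-stable}, and a bounded discrete Stokes extension to control $\|\bmu^n\|_{H^{-1/2}(\Gamma_{fp})}$ through \eqref{stokes-weak-1}. Your treatment is in fact slightly more careful than the paper's on one point: since \eqref{stokes-weak-1} at step $n+1$ only yields a bound on $\bmu^n$ for $0\le n\le N-1$, while $\widetilde{\mathcal{D}}^N$ involves $\bmu^N$, you correctly absorb the final index via the $L^2$ term $\tfrac{\Delta t}{4\gamma}\|\bmu^N\|_{L^2(\Gamma_{fp})}^2$ already sitting in $\widetilde{\mathcal{E}}^N$---a step the paper's proof leaves implicit.
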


\begin{proof}
A bound on $\|p_f^n\|_{L^2(\Omega_f)}$ can be obtained from \eqref{stokes-inf-sup} and \eqref{stokes-weak-1}. Noting that the restriction $\bv_f|_{\Gamma_{fp}} = 0$ in \eqref{stokes-inf-sup} eliminates all interface terms in \eqref{stokes-weak-1}, we obtain
\begin{equation}\label{pf-stab}
\|p_f^{n+1}\|_{L^2(\Omega_f)} \le C \|\u_f^{n+1}\|_{H^1(\Omega_f)}.
\end{equation}
Similarly, \eqref{darcy-inf-sup} and \eqref{biot-weak-1} with $\bxi_{p} = 0$ imply
\begin{equation}\label{pp-stab}
  \|p_p^{n+1}\|_{L^2(\Omega_p)} \le C \|\u_p^{n+1}\|_{L^2(\Omega_p)}.
\end{equation}
Next, we have
\begin{align*}
  \|\bmu^n\|_{H^{-1/2}(\Gamma_{fp})} & = \sup_{\bvarphi \in (H^{1/2}(\Gamma_{fp}))^d} \frac{\<\bvarphi,\bmu^n\>_{\Gamma_{fp}}}{\|\bvarphi\|_{H^{1/2}(\Gamma_{fp})}}
  = \sup_{\bvarphi \in (H^{1/2}(\Gamma_{fp}))^d} \frac{\<P_{\bL_h}\bvarphi,\bmu^n\>_{\Gamma_{fp}}}{\|\bvarphi\|_{H^{1/2}(\Gamma_{fp})}} \\
&  \le C_\Lambda\sup_{\bvarphi \in (H^{1/2}(\Gamma_{fp}))^d} \frac{\<P_{\bL_h}\bvarphi,\bmu^n\>_{\Gamma_{fp}}}{\|P_{\bL_h}\bvarphi\|_{H^{1/2}(\Gamma_{fp})}}.
\end{align*}
Since $\bL_h = \V_{f,h}|_{\Gamma_{fp}}$, there exists a discrete Stokes extension ${\bf E}_{f,h}: \bL_h \to \V_{f,h}$ such that for each $\bchi \in \bL_h$, ${\bf E}_{f,h} \, \bchi|_{\Gamma_{fp}} = \bchi$ and
$\|{\bf E}_{f,h}\bchi\|_{H^1(\Omega_f)} \le C_{ext}\|\bchi\|_{H^{1/2}(\Gamma_{fp})}$. Therefore,
\begin{align}\label{mu-stab}
  \|\bmu^n\|_{H^{-1/2}(\Gamma_{fp})} & \le C_\Lambda C_{ext}\sup_{\bvarphi \in (H^{1/2}(\Gamma_{fp}))^d} \frac{\<{\bf E}_{f,h} P_{\bL_h}\bvarphi,\bmu^n\>_{\Gamma_{fp}}}{\|{\bf E}_{f,h}P_{\bL_h}\bvarphi\|_{H^1(\Omega_f)}}
  \le C_\Lambda C_{ext}\sup_{\bv_f \in \V_{f,h}}\frac{\<\bv_f,\bmu^n\>_{\Gamma_{fp}}}{\|\bv_f\|_{H^1(\Omega_f)}}
  \nonumber \\
&  \le C\left(\|\u_f^{n+1}\|_{H^1(\Omega_f)} + \|p_f^{n+1}\|_{L^2(\Omega_f)}\right),
\end{align}
using \eqref{stokes-weak-1} for the last inequality. Bound \eqref{energy-improved} follows from combining \eqref{pf-stab}--\eqref{mu-stab} with \eqref{energy}.
\end{proof}

\begin{remark}
In the fully dynamic model, in order to control $\|p_f^{n+1}\|_{L^2(\Omega_f)}$, we need first to bound $\|\dt \u_f^{n+1}\|_{L^2(\Omega_f)}$, which can be done by applying the discrete time differentiation operator $\dt$ to the entire system. We omit further details.
\end{remark}

\section{Time discretization error analysis for the quasistatic model}\label{sec:error}
{
In this section we analyze the time discretization error in the Robin-Robin method given in \eqref{stokes-weak-1}--\eqref{stokes-weak-2}, \eqref{biot-weak-1}--\eqref{biot-weak-2}, and \eqref{mu-defn}. To this end, we consider the semidiscrete continuous in space version of the algorithm and compare its solution to the variational formulation 
\eqref{semi-1}--\eqref{semi-4}. In addition, for the sake of simplicity, we focus on the quasistatic model, where the terms $\rho_f \partial_t \u_{f}$ and $\rho_p \partial_{tt} \bbeta_{p}$ are omitted. In this section we assume that $\Gamma_{fp}$ is at least $C^1$.

Introducing the continuous versions of $\mu_n$ and $\mu_{\tau}$ from \eqref{mu-approx},
\begin{subequations}
\begin{alignat}{1}
\mu_n(t) :=& -\gamma(\d_t \bbeta_p(t) + \u_p(t))\cdot \n_p + \bs_p(t)\n_p \cdot \n_p, \label{muN}\\
\mu_\tau(t) :=& -\gamma \d_t \bbeta_p(t) \cdot \btau_p + \bs_p(t)\n_p \cdot \btau_p, \label{muT}
\end{alignat}
\end{subequations}  
the system \eqref{semi-1}--\eqref{semi-4} in the quasistatic case can be rewritten as
\begin{align}
  & a_{f}(\u_{f},\bv_{f}) + b_f(\bv_{f},p_{f}) +  \gamma \<\u_{f}\cdot \n_f, \bv_{f}\cdot \n_f\>_{\Gamma_{fp}}
+ \gamma \< \u_f \cdot \btau_f, \bv_f \cdot \btau_f\>_{\Gamma_{fp}}
  \nonumber \\
  & \qquad
  = (\f_{f},\bv_{f})_{\O_f} + \<\mu_n, \bv_f\cdot \n_f\>_{\Gamma_{fp}}+ \<\mu_\tau, \bv_f \cdot \btau_f\>_{\Gamma_{fp}} \label{semi-1-mu}\\
 & - b_f(\u_{f},w_{f}) = (q_{f},w_{f})_{\O_f}, \label{semi-2-mu} \\
  & a_p^e(\bbeta_p, \bxi_p)
  + a_p^d (\u_p, \bv_p)
  + \alpha b_p(\bxi_p, p_p)
  + b_p(\bv_p, p_p)
  \nonumber \\
  &\qquad\quad \ + \gamma \<(\u_p + \dt \bbeta_p)\cdot \n_p, (\bv_p + \bxi_p)\cdot \n_p\>_{\Gamma_{fp}}+ \gamma \<\dt \bbeta_p\cdot \btau_p, \bxi_p\cdot \btau_p\>_{\Gamma_{fp}}
  \nonumber \\
&\qquad = (\f_p, \bv_p)_{\O_p} + \<\mu_n - 2\gamma \u_f\cdot \n_f, (\bv_p + \bxi_p)\cdot \n_p\>_{\Gamma_{fp}}+ \<\mu_\tau - 2\gamma \u_f\cdot \btau_f, \bxi_p \cdot \btau_p\>_{\Gamma_{fp}} \label{semi-3-mu}\\
&s_0(\dt p_p, w_p)_{\O_p} - \alpha b_p(\dt \bbeta_p, w_p)- b_p(\u_p, w_p) =(q_p,w_p)_{\O_p}. \label{semi-4-mu}
\end{align}
}

Let us define the error terms for $i=f,p$ as follows:

\begin{align*}
  \U_i^{n+1} = \u_i(t_{n+1}) - \u_i^{n+1},
  \quad P_i^{n+1} =& p_i(t_{n+1}) - p_i^{n+1},
  \quad \E_p^{n+1} = \bbeta_p(t_{n+1}) - \bbeta_p^{n+1}.
\end{align*}
Additionally, we define a splitting error and a time discretization error operators acting on the exact solutions as follows:
\begin{alignat*}{1}
\mathbb{S}^{n+1}(\mathbf{\phi}) &:= \mathbf{\phi}(t_{n+1}) - \mathbf{\phi}(t_n), \\
\mathbb{T}^{n+1}(\phi) &:= \d_t \phi(t_{n+1}) - \dt  \phi(t_{n+1}), 
\end{alignat*}
{where we recall $\dt \phi(t_{n+1}) := \frac{\phi(t_{n+1}) - \phi(t_n)}{\Delta t}$.}
We note that the argument of $\mathbb{T}^{n+1}$ can be either a vector or a scalar. 
Next, we define variations of $\mu_n$ and $\mu_\tau$ with discrete time derivatives:
\begin{subequations}\label{muTildes}
\begin{alignat}{1}
\tilde{\mu}_n(t_n) :=& -\gamma (\dt \bbeta_p (t_n) + \u_p(t_n)) \cdot \n_p + \bs_p(t_n)\n_p \cdot \n_p, \label{muTildeN} \\
\tilde{\mu}_\tau (t_n) :=& -\gamma \dt \bbeta_p (t_n) \cdot \btau_p + \bs_p(t_n)\n_p \cdot \btau_p \label{muTildeT}.
\end{alignat}
\end{subequations}
Consequently, it follows that
\begin{subequations}\label{muTs}
\begin{alignat}{1}
\mu_n(t_n) =& \tilde{\mu}_n(t_n) - \gamma \mathbb{T}^n (\bbeta_p)\cdot \n_p, \label{muTn} \\
\mu_\tau (t_n) =&  \tilde{\mu}_\tau(t_n) - \gamma \mathbb{T}^n (\bbeta_p)\cdot \btau_p. \label{muTt}
\end{alignat}
\end{subequations}

We define the interface error terms:
\begin{subequations}
\begin{alignat}{1}
M_n^n :=& \tilde{\mu}_n(t_n) - \mu_n^n,\label{Mn}\\
M_{\tau}^{n} :=&  \tilde{\mu}_\tau(t_n) - \mu_\tau^n.\label{Mtau}
\end{alignat}
\end{subequations}
Consequently, we have
\begin{align}
\mu_n (t_{n+1}) - \mu_n^n &= (\mu_n (t_{n+1}) - \mu_n(t_n) )+ \mu_n(t_n) - \mu_n^n \nonumber\\
&= \mathbb{S}^{n+1} (\mu_n) + \mu_n(t_n) - \tilde{\mu}_n(t_n) + \tilde{\mu}_n(t_n) - \mu_n^n \nonumber \\
&= \mathbb{S}^{n+1}(\mu_n) - \gamma \mathbb{T}^n(\bbeta_p)\cdot \n_p + M_n^n \label{MnRelat},
\end{align}
and, by the same argument,
\begin{equation}\label{MtRelat}
\mu_\tau (t_{n+1}) - \mu_\tau^n = \mathbb{S}^{n+1}(\mu_\tau) - \gamma \mathbb{T}^n(\bbeta_p)\cdot \btau_p + M_\tau^n.
\end{equation}
{We note that in the continuous in space case considered in this section, the update \eqref{mu-strong} simplifies to}
\begin{subequations}\label{mu-strong-cont}  
\begin{align}
  \mu_n^{n+1}& = \mu_n^n - 2\gamma\big((\dt \bbeta_p^{n+1}
  + \u_p^{n+1})\cdot\n_p + \u_f^{n+1}\cdot\n_f\big),
  \label{mu-n-strong-cont} \\
  \mu_{\tau}^{n+1}  & = \mu_{\tau}^n - 2\gamma\big(
  \dt \bbeta_p^{n+1} \cdot\btau_p + \u_f^{n+1}\cdot\btau_f \big).
  \label{mu-t-strong-cont} 
\end{align}
\end{subequations}

{
\begin{theorem}
The following error estimate holds for the quasistatic continuous in space version of the algorithm given in \eqref{stokes-weak-1}--\eqref{stokes-weak-2}, \eqref{biot-weak-1}--\eqref{biot-weak-2}, and \eqref{mu-defn}:  
\begin{alignat}{1}
  & \max_{1\leq n \leq N}\big(
  \|\E_p^{n}\|_e
  + \|P_p^{n}\|_{L^2(\Omega_p)} 
  \big)
  + \left(\Delta t \sum_{n=1}^{N}\|\U_f^{n} \|_f^2\right)^{1/2}
  + \left(\Delta t \sum_{n=1}^{N} \| \U_p^{n}\|_d^2 \right)^{1/2}
  + \left(\Delta t \sum_{n=1}^{N}\|P_f^{n} \|_{L^2(\Omega_f)}^2 \right)^{1/2}
  \nonumber \\
  & \quad
  + \left(\Delta t \sum_{n=1}^{N}\|P_p^{n} \|_{L^2(\Omega_p)}^2 \right)^{1/2}
  + \left(\Delta t \sum_{n=1}^{N}\|\M^{n} \|_{H^{-1/2}(\Gamma_{fp})}^2 \right)^{1/2}
  = \mathcal{O}(\sqrt{T}\Delta t).
\label{error-estimate}
\end{alignat}  
\end{theorem}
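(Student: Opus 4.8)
The plan is to transport the stability argument of Theorem~\ref{stability}, together with the inf-sup/extension device of Theorem~\ref{stability-improved}, to the error equations, treating the truncation quantities $\mathbb{S}^{n+1}$, $\mathbb{T}^{n+1}$ as data. First I would subtract the continuous-in-space, quasistatic algorithm from \eqref{semi-1-mu}--\eqref{semi-4-mu} evaluated at $t_{n+1}$, obtaining error equations for the Stokes pair $(\U_f^{n+1},P_f^{n+1})$ and the Biot triple $(\E_p^{n+1},\U_p^{n+1},P_p^{n+1})$, in which the interface forcing is rewritten via \eqref{MnRelat}--\eqref{MtRelat}, so that the unknown interface error $\M^n=(M_n^n,M_\tau^n)$ appears together with the consistency terms $R_n^n=\mathbb{S}^{n+1}(\mu_n)-\gamma\mathbb{T}^n(\bbeta_p)\cdot\n_p$ and $R_\tau^n=\mathbb{S}^{n+1}(\mu_\tau)-\gamma\mathbb{T}^n(\bbeta_p)\cdot\btau_p$. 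A short computation combining \eqref{mu-strong-cont}, the exact interface conditions \eqref{eq:mass-conservation}--\eqref{Gamma-fp-1} with $\gamma_{BJS}=0$, and \eqref{muTn}--\eqref{muTt} gives the error recursion $M_n^{n+1}=M_n^n-2\gamma\big((\dt\E_p^{n+1}+\U_p^{n+1})\cdot\n_p+\U_f^{n+1}\cdot\n_f\big)+\mathcal{R}_n^{n+1}$ with $\mathcal{R}_n^{n+1}=R_n^n-\gamma\mathbb{T}^{n+1}(\bbeta_p)\cdot\n_p$, and likewise in the tangential direction. The structural fact that makes the argument close is that $\mathcal{R}_n^{n+1}$ is exactly the consistency term multiplying $(\dt\E_p^{n+1}+\U_p^{n+1})\cdot\n_p$ on the right-hand side of the Biot error equation (both arise from replacing $\d_t\bbeta_p(t_{n+1})$ by $\dt\bbeta_p^{n+1}$).

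\textbf{Energy estimate.} Testing the Stokes error equation with $(\bv_f,w_f)=(\U_f^{n+1},P_f^{n+1})$ cancels the pressure/divergence terms, and \eqref{identity-A1} applied to $\langle M_n^n-\gamma\U_f^{n+1}\cdot\n_f,\gamma\U_f^{n+1}\cdot\n_f\rangle_{\Gamma_{fp}}$ and its tangential analogue yields $\|\U_f^{n+1}\|_f^2$ in terms of $\tfrac{1}{4\gamma}\int_{\Gamma_{fp}}\big((M_n^n)^2-(M_n^n-2\gamma\U_f^{n+1}\cdot\n_f)^2\big)+\langle R_n^n,\U_f^{n+1}\cdot\n_f\rangle_{\Gamma_{fp}}$ plus $\tau$-terms. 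Testing the Biot error equation with $(\bxi_p,\bv_p,w_p)=(\dt\E_p^{n+1},\U_p^{n+1},P_p^{n+1})$ cancels the $b_p$ terms, \eqref{identity-A2} telescopes $a^e_p(\E_p^{n+1},\dt\E_p^{n+1})$ and $s_0(\dt P_p^{n+1},P_p^{n+1})$ into $\tfrac{1}{2\Delta t}$-differences, and moving the interface norms $\gamma\|(\U_p^{n+1}+\dt\E_p^{n+1})\cdot\n_p\|^2_{L^2(\Gamma_{fp})}+\gamma\|\dt\E_p^{n+1}\cdot\btau_p\|^2_{L^2(\Gamma_{fp})}$ to the right, inserting the $\M$-recursion, and using \eqref{identity-A1} once more produces $\tfrac{1}{4\gamma}\int_{\Gamma_{fp}}\big((M_n^n-2\gamma\U_f^{n+1}\cdot\n_f+\mathcal{R}_n^{n+1})^2-(M_n^{n+1})^2\big)$ \emph{with no interface remainder}, precisely because of the cancellation above (and the same in the $\tau$-direction). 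Adding the two identities, multiplying by $\Delta t$ and summing over $n$ telescopes the displacement, Darcy-pressure and $\M$-contributions, giving $\widetilde{\mathcal{E}}^N_{\mathrm{err}}+\Delta t\sum_{n=1}^N(\|\U_f^n\|_f^2+\|\U_p^n\|_d^2)+\sum_{n=1}^N\mathcal{S}^n_{\mathrm{err}}\le\widetilde{\mathcal{E}}^0_{\mathrm{err}}+\Delta t\sum_{n=0}^{N-1}\mathcal{C}^n$, where $\widetilde{\mathcal{E}}^n_{\mathrm{err}}=\tfrac{1}{2}\|\E_p^n\|_e^2+\tfrac{s_0}{2}\|P_p^n\|_{L^2(\O_p)}^2+\tfrac{\Delta t}{4\gamma}(\|M_n^n\|_{L^2(\Gamma_{fp})}^2+\|M_\tau^n\|_{L^2(\Gamma_{fp})}^2)$, $\mathcal{S}^n_{\mathrm{err}}$ is the error analogue of \eqref{Sn-energy}, and $\mathcal{C}^n$ collects $\langle R_n^n,\U_f^{n+1}\cdot\n_f\rangle_{\Gamma_{fp}}$ (and $\tau$-analogue), $\tfrac{1}{4\gamma}\int_{\Gamma_{fp}}\big(2c_n\mathcal{R}_n^{n+1}+(\mathcal{R}_n^{n+1})^2\big)$ with $c_n:=M_n^n-2\gamma\U_f^{n+1}\cdot\n_f$ (and $\tau$-analogue), and $-s_0(\mathbb{T}^{n+1}(p_p),P_p^{n+1})_{\O_p}+\alpha b_p(\mathbb{T}^{n+1}(\bbeta_p),P_p^{n+1})$. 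Since the continuous-in-space projections are exact and $\mu^0$ is initialized consistently, $\widetilde{\mathcal{E}}^0_{\mathrm{err}}=0$.

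\textbf{Bounding the consistency.} Assuming the exact solution is sufficiently regular, $\|\mathbb{S}^{n+1}(\phi)\|_\ast\le\Delta t\sup_{[t_n,t_{n+1}]}\|\d_t\phi\|_\ast$ and $\|\mathbb{T}^{n+1}(\phi)\|_\ast\le\Delta t\sup_{[t_n,t_{n+1}]}\|\d_{tt}\phi\|_\ast$ in the relevant norms, so $\Delta t\sum_n\|\mathcal{R}_n^{n+1}\|_\ast^2\le C\Delta t^2 T$; this is where the $\sqrt T$ comes from. The terms $\langle R_n^n,\U_f^{n+1}\cdot\n_f\rangle_{\Gamma_{fp}}$, $\tfrac{1}{4\gamma}\int_{\Gamma_{fp}}(\mathcal{R}_n^{n+1})^2$ and the Biot-divergence consistency are handled with the trace inequality, Young's inequality and—for the pressure factors—the inf-sup conditions \eqref{stokes-inf-sup}, \eqref{darcy-inf-sup} (so that $\|P_f^{n+1}\|_{L^2(\O_f)}\lesssim\|\U_f^{n+1}\|_{H^1(\O_f)}$ and $\|P_p^{n+1}\|_{L^2(\O_p)}\lesssim\|\U_p^{n+1}\|_d$), and then absorbed into the \emph{fixed} dissipation $\Delta t\sum(\|\U_f^n\|_f^2+\|\U_p^n\|_d^2)$; absorbing only into this term, never into the accumulating energy, is what keeps the final constant free of an $e^{CT}$ factor. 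The delicate term is $\tfrac{1}{2\gamma}\Delta t\sum_n\langle c_n,\mathcal{R}_n^{n+1}\rangle_{\Gamma_{fp}}$: the $L^2$-energy bound on $\M^n$ only scales like $\Delta t^{-1/2}$ and is too weak. Instead I would repeat the argument of Theorem~\ref{stability-improved}, cf.~\eqref{mu-stab}: from the Stokes error equation and the discrete Stokes extension, $\|\M^n\|_{H^{-1/2}(\Gamma_{fp})}\le C\big(\|\U_f^{n+1}\|_{H^1(\O_f)}+\|P_f^{n+1}\|_{L^2(\O_f)}+\Delta t\big)$, hence $\|c_n\|_{H^{-1/2}(\Gamma_{fp})}$ obeys the same bound; pairing against $\|\mathcal{R}_n^{n+1}\|_{H^{1/2}(\Gamma_{fp})}\le C\Delta t$ and absorbing again into the dissipation gives the required $\mathcal{O}(\Delta t^2 T)$ contribution. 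Running the whole estimate with $N$ replaced by an arbitrary $m\le N$ controls $\max_n(\|\E_p^n\|_e+\|P_p^n\|_{L^2(\O_p)})$; the inf-sup bounds give the $P_f$ and $P_p$ sums; and the $H^{-1/2}$ bound on $\M^n$ with the now-controlled velocity sums gives $\Delta t\sum_n\|\M^n\|_{H^{-1/2}(\Gamma_{fp})}^2$. Collecting these yields \eqref{error-estimate}.

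\textbf{Main obstacle.} The crux is the interface variable. One must first recognize that the remainder in the Biot error equation coincides exactly with the remainder $\mathcal{R}_n^{n+1}$ of the $\M$-recursion, so the interface manipulation closes without any leftover, and then control the surviving cross term $\langle c_n,\mathcal{R}_n^{n+1}\rangle$ through the $H^{-1/2}$ estimate for $\M^n$ rather than its much weaker $\Delta t^{-1/2}$-scaling $L^2$ bound. Obtaining the clean $\mathcal{O}(\sqrt T\Delta t)$ rate—with no logarithm and no exponential-in-$T$ constant—relies on measuring every truncation error in time-uniform norms and on routing all consistency contributions into the non-accumulating dissipation via the inf-sup conditions.
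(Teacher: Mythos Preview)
Your proposal is correct and follows essentially the same route as the paper's proof: derive the Stokes/Biot error equations, use the polarization identity \eqref{identity-A1} to telescope the interface contributions, recognize that the Biot consistency remainder coincides with the remainder $\mathcal{R}_n^{n+1}$ in the $\M$-recursion (the paper calls this $A_n^n$ and records it as \eqref{nSide}--\eqref{tauSide}), and then handle the surviving cross term by the $H^{-1/2}$ bound on $\M^n$ obtained from the Stokes error equation via the discrete extension, exactly as in \eqref{mu-stab}--\eqref{M-bound-2}. Your organization differs only cosmetically: you absorb the Biot consistency into the square \emph{before} applying \eqref{identity-A1}, obtaining $\tfrac{1}{4\gamma}\big((c_n+\mathcal{R}_n^{n+1})^2-(M_n^{n+1})^2\big)$ directly, so your cross term is $\tfrac{1}{2\gamma}\langle c_n,\mathcal{R}_n^{n+1}\rangle$, whereas the paper applies the identity first and then recombines, ending with $\tfrac{1}{2\gamma}\langle M_n^n,A_n^n\rangle$; since $c_n=M_n^n-2\gamma\U_f^{n+1}\cdot\n_f$ and $\mathcal{R}_n^{n+1}=A_n^n$, the two expressions differ by a harmless $\langle\U_f^{n+1}\cdot\n_f,A_n^n\rangle$ that is already accounted for in your $\langle R_n^n,\U_f^{n+1}\cdot\n_f\rangle$ term (this is the paper's $I_3$). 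The Gronwall-free structure you emphasize---absorbing every consistency contribution into the dissipative $\Delta t\sum(\|\U_f\|_f^2+\|\U_p\|_d^2)$ via the inf-sup bounds \eqref{Pf-bound}, \eqref{Pp-bound}---is precisely what the paper does as well.
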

}

\begin{proof}
{Subtracting \eqref{stokes-weak-1}--\eqref{stokes-weak-2} from \eqref{semi-1-mu}--\eqref{semi-2-mu} at $t = t_{n+1}$} and applying \eqref{MnRelat} and \eqref{MtRelat} yields
\begin{align}
  &
  a_f(\U_f^{n+1},\bv_f) + b_f(\bv_f, P_f^{n+1})
  + \gamma \< \U_f^{n+1} \cdot \n_f, \bv_f\cdot \n_f\>_{\Gamma_{fp}} + \gamma \<\U_f^{n+1}\cdot \btau_f, \bv_f\cdot \btau_f\>_{\Gamma_{fp}}\nonumber\\
  & \quad = \<\mu_n(t_{n+1}) - \mu_n^n, \bv_f\cdot \n_f\>_{\Gamma_{fp}}+ \<\mu_\tau(t_{n+1}) - \mu_\tau^n, \bv_f\cdot \btau_f\>_{\Gamma_{fp}}\nonumber \\
&\quad = \<M_n^n, \bv_f\cdot \n_f\>_{\Gamma_{fp}}+ \<M_\tau^n, \bv_f\cdot \btau_f\>_{\Gamma_{fp}}  \nonumber \\
&\quad \quad  + \< \mathbb{S}^{n+1}(\mu_n) - \gamma \mathbb{T}^n(\bbeta_p)\cdot \n_p,\bv_f \cdot \n_f\>_{\Gamma_{fp}}+ \< \mathbb{S}^{n+1}(\mu_\tau) - \gamma \mathbb{T}^n(\bbeta_p)\cdot \btau_p,\bv_f\cdot \btau_f\>_{\Gamma_{fp}}, \label{eq:St_sub1} \\
& - b_f(\U_f^{n+1},w_f) = 0. \label{eq:St_sub2}
\end{align}
Take $\bv_f = \U_f^{n+1}$ and $w_f = P_f^{n+1}$ in \eqref{eq:St_sub1}-\eqref{eq:St_sub2} and add them up to obtain
\begin{align}
  &
  a_f(\U_f^{n+1},\U_f^{n+1}) + \gamma \< \U_f^{n+1} \cdot \n_f, \U_f^{n+1}\cdot \n_f\>_{\Gamma_{fp}} + \gamma \<\U_f^{n+1}\cdot \tau_f, \U_f^{n+1}\cdot \tau_f\>_{\Gamma_{fp}} \nonumber\\
&\quad = \<M_n^n, \U_f^{n+1}\cdot \n_f\>_{\Gamma_{fp}}+ \<M_\tau^n, \U_f^{n+1}\cdot \btau_f\>_{\Gamma_{fp}}+ I_1, \label{eq:St_sub3}
\end{align}
where 
\begin{align}
& I_1 := \< \mathbb{S}^{n+1}(\mu_n) - \gamma \mathbb{T}^n(\bbeta_p)\cdot \n_p,\U_f^{n+1} \cdot \n_f\>_{\Gamma_{fp}}+ \< \mathbb{S}^{n+1}(\mu_\tau) - \gamma \mathbb{T}^n(\bbeta_p)\cdot \btau_p,\U_f^{n+1}\cdot \btau_f\>_{\Gamma_{fp}}, \label{H1}
\end{align}
%
%

We can manipulate \eqref{eq:St_sub3} as follows:
\begin{align*}
  & 
  a_f(\U_f^{n+1},\U_f^{n+1})
  = \frac{1}{\gamma}\<M_n^n - \gamma \U_f^{n+1} \cdot \n_f, \gamma \U_f^{n+1}\cdot \n_f\>_{\Gamma_{fp}} +\frac{1}{\gamma} \<M_\tau^n - \gamma \U_f^{n+1} \cdot \btau_f, \gamma \U_f^{n+1}\cdot \btau_f\>_{\Gamma_{fp}} + I_1.
\end{align*}
By applying \eqref{identity-A1} , we have
\begin{align}
  &   a_f(\U_f^{n+1},\U_f^{n+1})
= \frac{1}{4\gamma} \big( \|M_n^n\|_{L^2(\Gamma_{f_p})}^2 - \|M_n^n - 2\gamma \U_f^{n+1}\cdot \n_f\|_{L^2(\Gamma_{f_p})}^2\big)
\nonumber \\
& \quad
+ \frac{1}{4\gamma}\big(\|M_\tau^n\|_{L^2(\Gamma_{f_p})}^2 - \|M_\tau^n -2\gamma \U_f^{n+1}\cdot \btau_f\|_{L^2(\Gamma_{f_p})}^2\big) + I_1. \label{boundSTOKES}
\end{align}

{For the Biot problem, subtracting \eqref{biot-weak-1}--\eqref{biot-weak-2} from \eqref{semi-3-mu}--\eqref{semi-4-mu} at $t = t_{n+1}$}, we obtain
\begin{align}
  &
  a_p^e(\E_p^{n+1}, \bxi_p)
  + a_p^d (\U_p^{n+1}, \bv_p)
  + \alpha b_p(\bxi_p, P_p^{n+1})
  + b_p(\bv_p, P_p^{n+1})
  \nonumber\\
  & \qquad
  + \gamma \<(\U_p^{n+1} + \ddt \E_p^{n+1})\cdot \n_p, (\bv_p + \bxi_p)\cdot \n_p\>_{\Gamma_{fp}}
  + \gamma \<\mathbb{T}^{n+1}(\bbeta_p)\cdot \n_p,(\bv_p + \bxi_p)\cdot \n_p\>_{\Gamma_{fp}}
  \nonumber\\
  &\qquad
  + \gamma \<\ddt \E_p^{n+1}\cdot \btau_p, \bxi_p\cdot \btau_p\>_{\Gamma_{fp}} + \gamma \< \mathbb{T}^{n+1}(\bbeta_p)\cdot \btau_p, \bxi_p \cdot \btau_p\>_{\Gamma_{fp}} \nonumber\\
&\quad =  \< \mu_n(t_{n+1})- \mu_n^n  - 2\gamma \U_f^{n+1}\cdot \n_f, (\bv_p + \bxi_p)\cdot \n_p\>_{\Gamma_{fp}} + \<\mu_\tau(t_{n+1}) - \mu_\tau^n - 2\gamma \U_f^{n+1}\cdot \btau_f, \bxi_p \cdot \btau_p\>_{\Gamma_{fp}} \label{eq:Bi_sub1} \\
&s_0(\ddt P_p^{n+1}, w_p)_{\O_p} + s_0(\mathbb{T}^{n+1}(p_p),w_p)_{\O_p} - \alpha b_p(\ddt\E_p^{n+1},w_p) - \alpha b_p(\mathbb{T}^{n+1}(\bbeta_p),w_p)  \nonumber\\
& \qquad - b_p(\U_p^{n+1},w_p) = 0. \label{eq:Bi_sub2}
\end{align}
Letting $\bv_p = \U_p^{n+1}$, $\bxi_p = \ddt \E_p^{n+1}$, and $w_p = P_p^{n+1}$ in \eqref{eq:Bi_sub1}--\eqref{eq:Bi_sub2}, adding them up and using \eqref{MnRelat} and \eqref{MtRelat}, we have
\begin{align}
  &
  a_p^e(\E_p^{n+1}, \ddt \E_p^{n+1})
  + s_0(\ddt P_p^{n+1}, P_p^{n+1})_{\O_p}
  + a_p^d (\U_p^{n+1}, \U_p^{n+1})
  \nonumber \\
&\qquad + \gamma \<(\U_p^{n+1} + \ddt \E_p^{n+1})\cdot \n_p, (\U_p^{n+1} + \ddt\E_p^{n+1})\cdot \n_p\>_{\Gamma_{fp}}+ \gamma \<\ddt \E_p^{n+1}\cdot \btau_p, \ddt \E_p^{n+1}\cdot \btau_p\>_{\Gamma_{fp}} \nonumber\\
&\quad =  \<M_n^n  - 2\gamma \U_f^{n+1}\cdot \n_f, (\U_p^{n+1} + \ddt\E_p^{n+1})\cdot \n_p\>_{\Gamma_{fp}} + \<M_\tau^n - 2\gamma \U_f^{n+1}\cdot \btau_f, \ddt \E_p^{n+1} \cdot \btau_p\>_{\Gamma_{fp}} \nonumber\\
  &\qquad + \< \mathbb{S}^{n+1}(\mu_n) - \gamma \mathbb{T}^n(\bbeta_p)\cdot \n_p,(\U_p^{n+1} + \ddt \E_p^{n+1})\cdot \n_p\>_{\Gamma_{fp}} \nonumber\\
&\qquad   
  + \< \mathbb{S}^{n+1}(\mu_\tau) - \gamma \mathbb{T}^n(\bbeta_p)\cdot \btau_p,\ddt \E_p^{n+1}\cdot \btau_p\>_{\Gamma_{fp}}\nonumber\\
&\qquad  - \gamma \<\mathbb{T}^{n+1}(\bbeta_p)\cdot \n_p,(\U_p^{n+1} + \ddt \E_p^{n+1})\cdot \n_p\>_{\Gamma_{fp}}- \gamma \< \mathbb{T}^{n+1}(\bbeta_p)\cdot \btau_p, \ddt \E_p^{n+1} \cdot \btau_p\>_{\Gamma_{fp}} \nonumber\\
    &\qquad
    - s_0(\mathbb{T}^{n+1}(p_p),P_p^{n+1})_{\O_p} + \alpha b_p(\mathbb{T}^{n+1}(\bbeta_p),P_p^{n+1}). \label{eq:Bi_sub3}
\end{align}
Eq.~\eqref{eq:Bi_sub3} can be rewritten as follows:
\begin{alignat}{1}
  &
  a_p^e(\E_p^{n+1}, \ddt \E_p^{n+1})
  + s_0(\ddt P_p^{n+1}, P_p^{n+1})_{\O_p}
  + a_p^d (\U_p^{n+1}, \U_p^{n+1})
  \nonumber \\
&\quad =  \<M_n^n  - 2\gamma \U_f^{n+1}\cdot \n_f- \gamma (\U_p^{n+1} + \ddt \E_p^{n+1})\cdot \n_p, (\U_p^{n+1} + \ddt\E_p^{n+1})\cdot \n_p\>_{\Gamma_{fp}} \nonumber \\
&\qquad  + \<M_\tau^n - 2\gamma \U_f^{n+1}\cdot \btau_f- \gamma \ddt \E_p^{n+1} \cdot \btau_p, \ddt \E_p^{n+1} \cdot \btau_p\>_{\Gamma_{fp}} \nonumber \\
&\qquad  + \< A_n^n ,(\U_p^{n+1} + \ddt \E_p^{n+1})\cdot \n_p\>_{\Gamma_{fp}}+ \< A_\tau,\ddt \E_p^{n+1}\cdot \btau_f\>_{\Gamma_{fp}}+ I_2, \label{eq:Bi_sub4}
\end{alignat}
where
\begin{subequations}
\begin{alignat}{1}
A_n^n & := \mathbb{S}^{n+1}(\mu_n) - \gamma (\mathbb{T}^n(\bbeta_p) + \mathbb{T}^{n+1}(\bbeta_p))\cdot \n_p, \label{aN}\\
A_\tau^n & := \mathbb{S}^{n+1}(\mu_\tau) - \gamma (\mathbb{T}^n(\bbeta_p)+\mathbb{T}^{n+1}(\bbeta_p))\cdot \btau_p, \label{aT}\\
I_2 & := 
- s_0(\mathbb{T}^{n+1}(p_p),P_p^{n+1})_{\O_p} + \alpha b_p(\mathbb{T}^{n+1}(\bbeta_p),P_p^{n+1}). \label{H3}
\end{alignat}
\end{subequations}
Using \eqref{identity-A1}, we obtain from \eqref{eq:Bi_sub4}:
\begin{align}
  &
   a_p^e(\E_p^{n+1}, \ddt \E_p^{n+1})
  + s_0(\ddt P_p^{n+1}, P_p^{n+1})_{\O_p}
  + a_p^d (\U_p^{n+1}, \U_p^{n+1})
  \nonumber\\
&\quad = \frac{1}{4\gamma}\|M_n^n - 2\gamma \U_f^{n+1}\cdot \n_f\|_{L^2(\Gamma_{f_p})}^2 - \frac{1}{4\gamma}\|M_n^n - 2\gamma \U_f^{n+1} \cdot \n_f - 2\gamma (\U_p^{n+1} + \ddt \E_p^{n+1})\cdot \n_p \|_{L^2(\Gamma_{f_p})}^2 \nonumber\\
&\qquad + \frac{1}{4\gamma}\|M_\tau^n - 2\gamma \U_f^{n+1} \cdot \btau_f\|_{L^2(\Gamma_{f_p})}^2 - \frac{1}{4\gamma} \| M_\tau^n - 2\gamma \U_f^{n+1} \cdot \btau_f - 2 \gamma \ddt \E_p^{n+1}\cdot \btau_p \|_{L^2(\Gamma_{f_p})}^2\nonumber\\
&\qquad + \<A_n^n,(\U_p^{n+1} + \ddt \E_p^{n+1})\cdot \n_p\>{\Gamma_{fp}}+ \<A_\tau^n,\ddt \E_p^{n+1}\cdot \btau_p\>_{\Gamma_{fp}}+ I_2. \label{eq:Biot_error}
\end{align}
For the second term on the right-hand side above, we have
\begin{alignat}{1}
&M_n^n - 2\gamma \U_f^{n+1} \cdot \n_f - 2\gamma(\U_p^{n+1} + \ddt \E_p^{n+1})\cdot \n_p  \\
&\quad = \tilde{\mu}_n(t_n) - \mu_n^n - 2\gamma \u_f(t_{n+1})\cdot \n_f -2\gamma(\u_p(t_{n+1}) + \d_t \bbeta_p(t_{n+1}))\cdot\n_p \nonumber\\
&\quad \quad \ + 2 \gamma \mathbb{T}^{n+1}( \bbeta_p)\cdot \n_p  + 2\gamma \u_f^{n+1}\cdot \n_f + 2\gamma(\u_p^{n+1} + \ddt \bbeta_p^{n+1})\cdot\n_p\nonumber\\
&\quad = \tilde{\mu}_n(t_n) - \mu_n^n + 2 \gamma \mathbb{T}^{n+1}( \bbeta_p)\cdot \n_p - \mu_n^{n+1} + \mu_n^n \nonumber\\
&\quad = \tilde{\mu}_n(t_n) + 2\gamma \mathbb{T}^{n+1}( \bbeta_p)\cdot \n_p - \mu_n^{n+1} - \tilde{\mu}_n(t_{n+1}) + \tilde{\mu}_n(t_{n+1})  
  - \gamma \mathbb{T}^n(\bbeta_p)\cdot \n_p + \gamma \mathbb{T}^n(\bbeta_p)\cdot \n_p
  \nonumber\\
  &\quad = \tilde{\mu}_n(t_{n+1}) - \mu_n^{n+1}
  - (\tilde{\mu}_n(t_{n+1}) - \gamma \mathbb{T}^{n+1}( \bbeta_p)\cdot \n_p) + (\tilde{\mu}_n(t_{n}) - \gamma \mathbb{T}^{n}( \bbeta_p)\cdot \n_p)
  \nonumber\\
&\quad \quad
+ \gamma(\mathbb{T}^{n+1} (\bbeta_p) + \mathbb{T}^{n}(\bbeta_p))\cdot \n_p \nonumber\\
&\quad = M_n^{n+1} - \mathbb{S}^{n+1}(\mu_n) + \gamma(\mathbb{T}^{n+1} (\bbeta_p) + \mathbb{T}^{n}(\bbeta_p))\cdot \n_p \nonumber\\
&\quad = M_n^{n+1} - A_n^n, \label{nSide}
\end{alignat}
where we have applied \eqref{Mn}, \eqref{eq:mass-conservation}, \eqref{mu-n-strong-cont}, \eqref{muTn}, and \eqref{aN}. By an analogous argument, we have
\begin{alignat}{1}
  &M_\tau^n - 2\gamma \U_f^{n+1} \cdot \btau_f - 2 \gamma \ddt \E_p^{n+1}\cdot \btau_p  = M_\tau^{n+1} - A_\tau^n.
  \label{tauSide}
\end{alignat}
Applying \eqref{nSide} and \eqref{tauSide} to \eqref{eq:Biot_error}, we obtain
\begin{alignat}{1}
  &
   a_p^e(\E_p^{n+1}, \ddt \E_p^{n+1})
  + s_0(\ddt P_p^{n+1}, P_p^{n+1})_{\O_p}
  + a_p^d (\U_p^{n+1}, \U_p^{n+1})
  \nonumber\\
& \quad = \frac{1}{4\gamma}\|M_n^n - 2\gamma \U_f^{n+1}\cdot \n_f\|_{L^2(\Gamma_{f_p})}^2 - \frac{1}{4\gamma} \|M_n^{n+1}\|_{L^2(\Gamma_{f_p})}^2 \nonumber \\
&\quad \ + \frac{1}{4\gamma}\|M_\tau^n - 2\gamma \U_f^{n+1} \cdot \btau_f\|_{L^2(\Gamma_{f_p})}^2 - \frac{1}{4\gamma} \|M_\tau^{n+1}\|_{L^2(\Gamma_{f_p})}^2 \nonumber \\
&\quad \ + \frac{1}{2\gamma}\<A_n^n, M_n^{n+1} + 2\gamma (\U_p^{n+1} + \ddt \E_p^{n+1})\cdot \n_p\>+ \frac{1}{2\gamma}\<A_\tau^n,M_\tau^{n+1} + 2\gamma \ddt \E_p^{n+1}\cdot \btau_p\>\nonumber \\
 &\quad \ - \frac{1}{4\gamma}\|A_n^n\|_{L^2(\Gamma_{f_p})}^2   -\frac{1}{4\gamma} \|A_\tau^n\|_{L^2(\Gamma_{f_p})}^2+ I_2. \label{boundBIOT}
\end{alignat}

We note that, using \eqref{aN} and \eqref{aT}, $I_1$ may be rewritten as
\begin{alignat*}{1}
I_1 =& \frac{1}{2\gamma}\<A_n^n, 2\gamma \U_f^{n+1} \cdot \n_f\>_{\Gamma_{fp}}+ \frac{1}{2\gamma}\<A_\tau^n, 2\gamma \U_f^{n+1}\cdot \btau_f\>_{\Gamma_{fp}} \\
& + \<\gamma \mathbb{T}^{n+1}(\bbeta_p)\cdot\n_p, \U_f^{n+1}\cdot \n_f\>_{\Gamma_{fp}}+ \<\gamma \mathbb{T}^{n+1}(\bbeta_p)\cdot \btau_p,\U_f^{n+1}\cdot \btau_f\>_{\Gamma_{fp}}.
\end{alignat*}

Next, we combine \eqref{boundBIOT} and \eqref{boundSTOKES} and use the above expression to get
\begin{alignat*}{1}
  &
  a_p^e(\E_p^{n+1}, \ddt \E_p^{n+1})
  + s_0(\ddt P_p^{n+1}, P_p^{n+1})_{\O_p}
  + a_f(\U_f^{n+1},\U_f^{n+1})
  + a_p^d (\U_p^{n+1}, \U_p^{n+1})
  \\
& \quad
  = \frac{1}{4\gamma}\big(\|M_n^n\|_{L^2(\Gamma_{f_p})}^2 -\|M_n^{n+1}\|_{L^2(\Gamma_{f_p})}^2\big) 
  +\frac{1}{4\gamma}\big(\|M_\tau^n\|_{L^2(\Gamma_{f_p})}^2 -\|M_\tau^{n+1}\|_{L^2(\Gamma_{f_p})}^2\big) \\
& \qquad \
  + \frac{1}{2\gamma} \<M_n^{n+1} + 2\gamma \U_f^{n+1}\cdot \n_f + 2\gamma (\U_p^{n+1} + \ddt \E_p^{n+1})\cdot \n_p, A_n^n\>_{\Gamma_{fp}}\\
  &\qquad \ + \frac{1}{2\gamma}\<M_\tau^{n+1} + 2\gamma \U_f^{n+1} \cdot \btau_f + 2\gamma \ddt \E_p^{n+1} \cdot \btau_p, A_\tau^n\>_{\Gamma_{fp}} - \frac{1}{4\gamma}\|A_n^n\|_{L^2(\Gamma_{f_p})}^2   -\frac{1}{4\gamma} \|A_\tau^n\|_{L^2(\Gamma_{f_p})}^2 \\
& \qquad \  + I_2 + I_3,
\end{alignat*}
where 
\begin{alignat}{1}\label{H4}
I_3:= \<\gamma \mathbb{T}^{n+1}(\bbeta_p)\cdot\n_p, \U_f^{n+1}\cdot \n_f\>_{\Gamma_{fp}}+ \<\gamma \mathbb{T}^{n+1}(\bbeta_p)\cdot \btau_p,\U_f^{n+1}\cdot \btau_f\>_{\Gamma_{fp}}.
\end{alignat}

From \eqref{nSide} and \eqref{tauSide}, it follows that
\begin{alignat*}{1}
&M_n^{n+1} + 2\gamma \U_f^{n+1}\cdot \n_f + 2\gamma (\U_p^{n+1} + \ddt \E_p^{n+1})\cdot \n_p = M_n^n + A_n^n, \\
&M_\tau^{n+1} + 2\gamma \U_f^{n+1} \cdot \btau_f + 2\gamma \ddt \E_p^{n+1} \cdot \btau_p = M_\tau^n + A_\tau^n.
\end{alignat*}
Therefore, we have
\begin{alignat}{1}
  &
  a_p^e(\E_p^{n+1}, \ddt \E_p^{n+1})
  + s_0(\ddt P_p^{n+1}, P_p^{n+1})_{\O_p}
  + a_f(\U_f^{n+1},\U_f^{n+1})
  + a_p^d (\U_p^{n+1}, \U_p^{n+1})
  \nonumber\\  
&\quad = \frac{1}{4\gamma}\big(\|M_n^n\|_{L^2(\Gamma_{f_p})}^2 -\|M_n^{n+1}\|_{L^2(\Gamma_{f_p})}^2\big) +\frac{1}{4\gamma}\big(\|M_\tau^n\|_{L^2(\Gamma_{f_p})}^2 -\|M_\tau^{n+1}\|_{L^2(\Gamma_{f_p})}^2\big) \nonumber\\
  &\quad \quad   + \frac{1}{2\gamma} \<M_n^n + A_n^n, A_n^n\>_{\Gamma_{fp}}+ \frac{1}{2\gamma}\<M_\tau^{n} + A_\tau^n, A_\tau^n\>_{\Gamma_{fp}}
  - \frac{1}{4\gamma}\|A_n^n\|_{L^2(\Gamma_{f_p})}^2   -\frac{1}{4\gamma} \|A_\tau^n\|_{L^2(\Gamma_{f_p})}^2
  + I_2 + I_3 \nonumber\\
&\quad = \frac{1}{4\gamma}\big(\|M_n^n\|_{L^2(\Gamma_{f_p})}^2 -\|M_n^{n+1}\|_{L^2(\Gamma_{f_p})}^2\big) +\frac{1}{4\gamma}\big(\|M_\tau^n\|_{L^2(\Gamma_{f_p})}^2 -\|M_\tau^{n+1}\|_{L^2(\Gamma_{f_p})}^2\big) \nonumber\\
  &\quad \quad  + \frac{1}{2\gamma} \<M_n^n, A_n^n\>_{\Gamma_{fp}}
  + \frac{1}{2\gamma}\<M_\tau^{n} , A_\tau^n\>_{\Gamma_{fp}}
  + \frac{1}{4\gamma}\|A_n^n\|_{L^2(\Gamma_{f_p})}^2
  + \frac{1}{4\gamma} \|A_\tau^n\|_{L^2(\Gamma_{f_p})}^2
  + I_2 + I_3. \label{stop1}
\end{alignat}
For the mixed terms on the right hand side, let $\M^n = (M_n^n,M_\tau^n)$ and $\A^n = (A_n^n,A_\tau^n)$. Since $\Gamma_{fp}$ is assumed to be $C^1$, it holds that $\A^n \in (H^{1/2}(\Gamma_{fp}))^d$. We have
\begin{align}
& \frac{1}{2\gamma}\<M_n^n, A_n^n\>_{\Gamma_{fp}}
+ \frac{1}{2\gamma}\<M_\tau^{n} , A_\tau^n\>_{\Gamma_{fp}} = \frac{1}{2\gamma}\<\M^n,\A^n\>_{\Gamma_{fp}} \le \frac{1}{2\gamma}\|\M^n\|_{H^{-1/2}(\Gamma_{fp})}\|\A^n\|_{H^{1/2}(\Gamma_{fp})} \nonumber \\
& \qquad \le \frac{\epsilon}{4\gamma}\|\M^n\|_{H^{-1/2}(\Gamma_{fp})}^2 + \frac{1}{4\epsilon\gamma}\|\A^n\|_{H^{1/2}(\Gamma_{fp})}^2,\label{mixed-1}
\end{align}
where we have used the duality of $\|\cdot\|_{H^{-1/2}(\Gamma_{fp})}$ and $\|\cdot\|_{H^{1/2}(\Gamma_{fp})}$, as well as Young's inequality. Using the continuous version of the inf-sup condition \eqref{stokes-inf-sup} and \eqref{eq:St_sub1}, we obtain
\begin{equation}\label{Pf-bound}
  \|P_f^{n+1}\|_{L^2(\Omega_f)} \le \frac{\sqrt{C_f}}{\beta_f}\|\U_f^{n+1}\|_f,
\end{equation}
where $C_f$ is the continuity constant,
$a_f(\u_f,\bv_f) \le C_f \|\u_f\|_{H^1(\Omega_f)}\|\bv_f\|_{H^1(\Omega_f)}$.
Next, similarly to \eqref{mu-stab}, from \eqref{eq:St_sub1} we have
\begin{equation}\label{M-bound-1}
  \frac{1}{C_{ext}}  \|\M^n\|_{H^{-1/2}(\Gamma_{fp})} \le \sqrt{C_f}(1+\gamma C_{tr}^2)\|\U_f^{n+1}\|_f
  + \|P_f^{n+1}\|_{L^2(\Omega_f)}
    + C_{tr}\|\mathbb{S}^{n+1}(\bmu) - \gamma \mathbb{T}^n(\bbeta_p)\|_{H^{-1/2}(\Gamma_{fp})},
\end{equation}
where $C_{tr}$ is the trace inequality constant,
$\|\bv_f\|_{H^{1/2}(\Gamma_{fp})} \le C_{tr} \|\bv_f\|_{H^1(\O_f)}$.
Combining \eqref{Pf-bound} and \eqref{M-bound-1} implies that
\begin{equation}\label{M-bound-2}
\|\M^n\|_{H^{-1/2}(\Gamma_{fp})}^2 \le \widetilde C_f \left( \|\U_f^{n+1}\|_f^2 + \|\B^n\|_{H^{-1/2}(\Gamma_{fp})}^2\right),
\end{equation}
where $\B^n:= \mathbb{S}^{n+1}(\bmu) - \gamma \mathbb{T}^n(\bbeta_p)$. Taking $\epsilon = \frac{\gamma}{\widetilde C_f}$ in \eqref{mixed-1} and using \eqref{M-bound-2} results in 
\begin{equation}\label{mixed-2}
\frac{1}{2\gamma}\<\M^n,\A^n\>_{\Gamma_{fp}} \le \frac14\|\U_f^{n+1}\|_f^2 + \frac14\|\B^n\|_{H^{-1/2}(\Gamma_{fp})}^2 + \frac{\widetilde C_f}{4\gamma^2}\|\A^n\|_{H^{1/2}(\Gamma_{fp})}^2.
\end{equation}

It remains to bound $I_2$ and $I_3$. To this end, we first note that the continuous version of the inf-sup condition \eqref{darcy-inf-sup} and \eqref{eq:Bi_sub1} with $\bxi_p = 0$ imply
\begin{equation}\label{Pp-bound}
\|P_p^{n+1}\|_{L^2(\Omega_p)}^2 \le \widetilde C_p \|\U_p^{n+1}\|_d^2.
\end{equation}
Now, Using the Cauchy-Schwarz and Young's inequalities, we have
\begin{alignat}{1}
  |I_2| & \leq
  s_0^2\widetilde C_p \|\mathbb{T}^{n+1}(p_p)\|_{L^2(\Omega_p)}^2
  + \frac{1}{4\widetilde C_p}\|P_p^{n+1}\|_{L^2(\Omega_p)}^2
  + \alpha^2\widetilde C_p\|\nabla\cdot \mathbb{T}^{n+1}(\bbeta_p)\|_{L^2(\Omega_p)}^2
  + \frac{1}{4\widetilde C_p}\|P_p^{n+1}\|_{L^2(\Omega_p)}^2
 \nonumber \\
 & \quad
 \le s_0^2\widetilde C_p \|\mathbb{T}^{n+1}(p_p)\|_{L^2(\Omega_p)}^2
 + \alpha^2\widetilde C_p\|\nabla\cdot \mathbb{T}^{n+1}(\bbeta_p)\|_{L^2(\Omega_p)}^2
 + \frac12\|\U_p^{n+1}\|_d^2,\label{H3Bound}\\
|I_3| & \quad \leq \frac{\gamma^2}{2\epsilon}\|\mathbb{T}^{n+1}(\bbeta_p)\|_{H^{-1/2}(\Gamma_{fp})}^2
  + \frac{\epsilon}{2}\|\U_f^{n+1}\|_{H^{1/2}(\Gamma_{fp})}^2
\leq \frac{\gamma^2}{2\epsilon}\|\mathbb{T}^{n+1}(\bbeta_p)\|_{H^{-1/2}(\Gamma_{fp})}^2
+ \frac{\epsilon C_{tr}^2}{2c_f}\|\U_f^{n+1}\|_f^2 \nonumber\\
& \quad \leq  \frac{\gamma^2C_{tr}^2}{c_f}\|\mathbb{T}^{n+1}(\bbeta_p)\|_{H^{-1/2}(\Gamma_{fp})}^2 + \frac14\|\U_f^{n+1}\|_f^2,
\label{H4Bound}
\end{alignat}
where we used the trace inequality and the coercivity bound for $a_f$ in \eqref{coercivity} and chose $\epsilon = {c_f}/{(2C_{tr}^2)}$.

Applying bounds \eqref{mixed-2} and \eqref{H3Bound}--\eqref{H4Bound} in \eqref{stop1}, using that
$a(a-b) \ge \frac{1}{2}(a^2 - b^2)$ (cf. \eqref{identity-A2}) for the first four terms on the left-hand side, as well as the coercivity bounds \eqref{coercivity} yields
\begin{alignat}{1}
& \frac{1}{ 2\Delta t}
\bigg( \|\E_p^{n+1}\|_e^2 - \|\E_p^{n}\|_e^2\bigg)
+ \frac{s_0}{2\Delta t}\bigg( \|P_p^{n+1}\|_{L^2(\Omega_p)}^2 - \|P_p^n\|_{L^2(\Omega_p)}^2\bigg)
\nonumber\\
& \qquad
+ \frac{1}{2}  \|\U_f^{n+1} \|_f^2
+ \frac12\| \U_p^{n+1}\|_d^2
+ \frac{1}{4\gamma}\bigg(\|\M^{n+1}\|_{L^2(\Gamma_{f_p})}^2 - \|\M^n\|_{L^2(\Gamma_{f_p})}^2 \bigg)
\le J^n,
\label{stop2}
\end{alignat}
where 
\begin{align*}
& J^n :=
\frac14\|\B^n\|_{H^{-1/2}(\Gamma_{fp})}^2 + \frac{\widetilde C_f}{4\gamma^2}\|\A^n\|_{H^{1/2}(\Gamma_{fp})}^2 +
\frac{1}{4\gamma}\|\A^n\|_{L^2(\Gamma_{f_p})}^2
+ s_0^2\widetilde C_p \|\mathbb{T}^{n+1}(p_p)\|_{L^2(\Omega_p)}^2  \nonumber \\
& \qquad + \alpha^2\widetilde C_p \|\nabla \cdot \mathbb{T}^{n+1}(\bbeta_p)\|_{L^2(\Omega_p)}^2
+ \frac{\gamma^2C_{tr}^2}{c_f}\|\mathbb{T}^{n+1}(\bbeta_p)\|_{L^2(\Gamma_{fp})}^2.
\end{align*}
We multiply \eqref{stop2} by $\Delta t$ and sum from $n=0$ to $m-1$, for $1\leq m \leq N$, which yields
\begin{alignat}{1}
& \frac{1}{ 2}
\|\E_p^{m}\|_e^2
+ \frac{s_0}{2}\|P_p^{m}\|_{L^2(\Omega_p)}^2 
+ \frac{\Delta t}{2}  \sum_{n=0}^{m-1}\|\U_f^{n+1} \|_f^2
+ \frac{\Delta t}{2}  \sum_{n=0}^{m-1} \| \U_p^{n+1}\|_d^2
+ \frac{\Delta t}{4\gamma}\|\M^{m}\|_{L^2(\Gamma_{f_p})}^2
\nonumber\\
& \qquad
\le \Delta t \sum_{n=0}^{m-1} J^n + \|\E_p^{0}\|_e^2
+ \frac{s_0}{2}\|P_p^{0}\|_{L^2(\Omega_p)}^2 + \frac{\Delta t}{4\gamma}\|\M^{0}\|_{L^2(\Gamma_{f_p})}^2.
\label{stop3}
\end{alignat}
{All initial error terms above are zero. In particular, $\E_p^{0} = 0$ and $P_p^0 = 0$ from the choice of initial numerical values, and $\ddt \bbeta_p(t_0) - \ddt\bbeta_p^0 = \u_{s,0} - \u_{s,0} = 0$ implies that $\M^0=0$. For the term $J^n$, which collects all time splitting and time discretization errors, assuming that the solution is sufficiently smooth, it is straightforward to show that
  $J^n = \mathcal{O}(\Delta t^2)$, hence $\Delta t\sum_{n=0}^{m-1} J^n = \mathcal{O}(T\Delta t^2)$.
  Therefore, using that \eqref{stop3} holds for all $1\leq m \leq N$ and combining it with \eqref{Pf-bound}, \eqref{M-bound-2}, and \eqref{Pp-bound}, we obtain \eqref{error-estimate}.}

\end{proof}

\section{Iterative algorithm}\label{sec:iter}

The algorithm discussed in the previous sections solves one Stokes and one Biot problems per time step. This is a computationally efficient choice, but it introduces a splitting error. To avoid this error, one could use the iterative version of the algorithm, which at every time $t^{n+1}$ iterates over the Stokes and Biot sub-problems until convergence. These are Richardson (also called fixed point) iterations. Let $k$ be the index for these iterations. For ease of notation, in the description of the algorithm we will drop the time step index from the variables in the Richardson iterations, i.e., we will write $\phi^{k+1}$ instead of the more rigorous (and bulkier)  $\phi^{n+1, k+1}$. 
{Finally, let $\dt\varphi^{k+1} := (\varphi^{k+1} - \varphi^{n})/{\Delta t}$ and
$\dtt\bbeta_p^{k+1} := \dt \dt\bbeta_p^{k+1} = (\dt\bbeta_p^{k+1} - \dt\bbeta_p^n)/\Delta t$. Recalling that
$\dt \bbeta_p^{n} = (\bbeta_p^{n} - \bbeta_p^{n-1})/{\Delta t}$ for $n \ge 1$, we have
  $\dtt\bbeta_p^{k+1} = (\bbeta_p^{k+1} -2 \bbeta_p^{n}+\bbeta_p^{n-1})/{\Delta t^2}$, while for $n = 0$ we have $\dtt\bbeta_p^{k+1} = ((\bbeta_p^{k+1} - \bbeta_p^{n})/\Delta t - \u_{s,0})/\Delta t$.
  }

{For simplicity we present the method in the case $\gamma_{BJS} = 0$.
At every time $t^{n+1}$, assume that $\u_f^n$, $p_p^n$, $\bbeta_p^{n}$,
  $\mu_n^{n}$, and $\mu_\tau^n$ are known. Set $\mu_n^0 = \mu_n^n$ and $\mu_\tau^0 = \mu_\tau^n$. 
  The following steps are performed at iteration $k+1$, $k \ge 0$:}

\begin{enumerate}
\item Stokes problem: Find $(\u_f^{k+1}, p_f^{k+1})$ such that
\begin{align}
  & \left(\rho_f {\dt\u_{f}^{k+1}},\bv_{f}\right)_{\O_f}+ a_{f}(\u_{f}^{k+1},\bv_{f}) + b_f(\bv_{f},p_{f}^{k+1}) + \gamma_f\<\u_f^{k+1}\cdot\n_f,\bv_f\cdot\n_f\>_{\Gamma_{fp}}
  \nonumber \\
  & \qquad  +  \gamma_f \< \u_f^{k+1}\cdot\btau_{f},\bv_f\cdot\btau_{f}\>_{\Gamma_{fp}} = (\f_{f},\bv_{f})_{\O_f} + \<\mu_n^k,\bv_f\cdot\n_{f}\>_{\Gamma_{fp}}  + \< \mu_{\tau}^k,\bv_f\cdot\btau_{f}\>_{\Gamma_{fp}}  \label{stokes-weak-1_iter}\\
 & - b_f(\u^{k+1}_{f},w_{f}) = (q_{f},w_{f})_{\O_f}. \label{stokes-weak-2_iter}
\end{align}

\item Biot problem: Find $(\bbeta^{k+1}_p, \u_p^{k+1}, p_p^{k+1})$ such that
\begin{align}
 &\big(\rho_p\dtt \bbeta_{p}^{k+1},\bxi_{p}\big)_{\O_p}+ a^d_{p}(\u_{p}^{k+1},\bv_{p}) + a^e_{p}(\bbeta_{p}^{k+1},\bxi_{p}) + b_p(\bv_{p},p_{p}^{k+1}) +
  \alpha b_p(\bxi_{p},p_{p}^{k+1}) \nonumber \\
  & \qquad  + \gamma_p\<(\u_p^{k+1} + \dt\bbeta_p^{k+1})\cdot\n_p,
        (\bv_p + \bxi_p)\cdot\n_p\>_{\Gamma_{fp}}
  + \gamma_p\<\dt\bbeta_p^{k+1}\cdot\btau_p,\bxi_p\cdot\btau_p\>_{\Gamma_{fp}}
  \nonumber \\
  & \quad = (\f_{p},\bxi_{p})_{\O_p}
  + \<\mu_n^k - (\gamma_p + \gamma_f)\u_f^{k+1}\cdot\n_f,
  (\bv_p + \bxi_p)\cdot\n_p\>_{\Gamma_{fp}} \nonumber \\
  & \qquad
  + \left\<\mu_{\tau}^k
- (\gamma_p + \gamma_f) \u_f^{k+1}\cdot\btau_f,
\bxi_p\cdot\btau_p\right\>_{\Gamma_{fp}}, \label{biot-weak-1_iter} \\
& ( s_0 \dt p_{p}^{k+1},w_{p})_{\O_p} - \alpha b_p(\dt\bbeta_{p}^{k+1},w_{p}) - b_p(\u_{p}^{k+1},w_{p}) = (q_{p},w_{p})_{\O_p}. \label{biot-weak-2_iter}
\end{align}

\item Update: 
{
\begin{subequations}\label{mu-defn-iter}   
\begin{align}
  \<\mu_n^{k+1},\chi_n\>_{\Gamma_{fp}} & = \big\<\mu_n^k -(\gamma_f + \gamma_p)\big((\dt \bbeta_p^{k+1}
  + \u_p^{k+1})\cdot\n_p + \u_f^{k+1}\cdot\n_f\big),\chi_n\big\>_{\Gamma_{fp}},
  \label{mu-n-defn-iter} \\
  \<\mu_{\tau}^{k+1},\chi_\tau\>_{\Gamma_{fp}}  & = \big\<\mu_{\tau}^k -(\gamma_f + \gamma_p)\big(
  \dt \bbeta_p^{k+1} \cdot\btau_p + \u_f^{k+1}\cdot\btau_f  \big),\chi_\tau\big\>_{\Gamma_{fp}}.
  \label{mu-t-defn-iter} 
\end{align}
\end{subequations}
}
\item Check the stopping criterion, e.g.
\begin{equation}\label{eq:stopping}
\left\lVert \u_f^{k+1}\cdot\n_f - \u_f^{k}\cdot\n_f \right\rVert_{L^2(\Gamma_{fp})}< \epsilon
\end{equation}
where $\epsilon$ is a given stopping tolerance. If not satisfied, repeat steps 1--4. If satisfied,
set $\u_f^{n+1} = \u_f^{k+1}$, $p_f^{n+1} = p_f^{k+1}$, $\bbeta^{n+1}_p = \bbeta^{k+1}_p$, $\u_p^{n+1} = \u_p^{k+1}$, $p_p^{n+1} = p_p^{k+1}$, and $\boldsymbol{\mu}^{n+1} = \boldsymbol{\mu}^{k+1}$.
\end{enumerate}

We next show that the above algorithm converges.

\begin{theorem}\label{iter-semidiscrete-converge}
  For the iterative algorithm \eqref{stokes-weak-1_iter}--\eqref{eq:stopping} with $\gamma_f = \gamma_p = \gamma$ it holds that as $k \to \infty$, $\u_f^{k+1} \to \u_f^{n+1}$ in $\V_f$, $p_f^{k+1} \to p_f^{n+1}$ in $W_f$, $\bbeta_p^{k+1} \to \bbeta_p^{n+1}$ in $\X_p$, $\u_p^{k+1} \to \u_p^{n+1}$ in $\V_p$, $p_p^{k+1} \to p_p^{n+1}$ in $W_p$, and $\bmu^{k+1} \to \bmu^{n+1}$ in $(L^2(\Gamma_{fp}))^2$.
\end{theorem}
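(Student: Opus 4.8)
The plan is to replay, at the level of a single time step, the telescoping energy argument of Theorem~\ref{stability}, iterating over the Richardson index $k$ in place of the time index $n$. Since the quantities $\u_f^n$, $\bbeta_p^n$, $p_p^n$ inherited from the previous time step are frozen throughout the iteration, the consecutive differences $d_{\u_f}^k := \u_f^{k+1}-\u_f^k$, $d_{p_f}^k$, $d_{\bbeta_p}^k$, $d_{\u_p}^k$, $d_{p_p}^k$, $d_{\bmu}^k := \bmu^{k+1}-\bmu^k$ satisfy the \emph{homogeneous} versions of \eqref{stokes-weak-1_iter}--\eqref{stokes-weak-2_iter}, \eqref{biot-weak-1_iter}--\eqref{biot-weak-2_iter}, \eqref{mu-defn-iter}, with $d_{\bmu}^{k-1}$ supplying the Robin data in the Stokes step and $d_{\bmu}^{k-1}-2\gamma\,d_{\u_f}^k\cdot(\n_f,\btau_f)$ in the Biot step (recall $\gamma_f=\gamma_p=\gamma$). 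Moreover, in the difference equations the time-step data cancels, so the inertial contributions reduce to $\tfrac{\rho_f}{\Delta t}\|d_{\u_f}^k\|_{L^2(\O_f)}^2$ and $\tfrac{\rho_p}{\Delta t^3}\|d_{\bbeta_p}^k\|_{L^2(\O_p)}^2$ with no need for \eqref{identity-A2}. The essential structural observation is that $d_{\bmu}^k$ depends on $d_{\bmu}^{k-1}$ through a fixed linear operator $\mathcal G$ on the finite-dimensional space $\bL_h$, independent of $k$.

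I would then carry out the computation of the proof of Theorem~\ref{stability} on these homogeneous equations: test the Stokes system with $(d_{\u_f}^k,d_{p_f}^k)$ and the Biot system with $(d_{\u_p}^k,\tfrac{1}{\Delta t}d_{\bbeta_p}^k,d_{p_p}^k)$, and use \eqref{L2proj-lambda}, \eqref{mu-strong}, and \eqref{identity-A1} exactly as there. The $\alpha b_p$ and $b_p$ couplings cancel, and the intermediate square $\|d_{\bmu}^{k-1}-2\gamma\,d_{\u_f}^k\cdot(\n_f,\btau_f)\|_{L^2(\Gamma_{fp})}^2$ cancels between the two subproblems, yielding the \emph{exact} identity
\begin{equation*}
\tfrac{1}{4\gamma}\|d_{\bmu}^{k}\|_{L^2(\Gamma_{fp})}^2 \;=\; \tfrac{1}{4\gamma}\|d_{\bmu}^{k-1}\|_{L^2(\Gamma_{fp})}^2 - Q(d_{\bmu}^{k-1}),
\end{equation*}
where $Q(\bchi)\ge 0$ is the sum of $\tfrac{\rho_f}{\Delta t}\|d_{\u_f}\|_{L^2(\O_f)}^2+\|d_{\u_f}\|_f^2+\tfrac{\rho_p}{\Delta t^3}\|d_{\bbeta_p}\|_{L^2(\O_p)}^2+\tfrac{1}{\Delta t}\|d_{\bbeta_p}\|_e^2+\tfrac{s_0}{\Delta t}\|d_{p_p}\|_{L^2(\O_p)}^2+\|d_{\u_p}\|_d^2$ and the nonnegative $L^2(\Gamma_{fp})$-projection defect of $\big((d_{\u_p}+\tfrac{1}{\Delta t}d_{\bbeta_p})\cdot\n_p,\tfrac{1}{\Delta t}d_{\bbeta_p}\cdot\btau_p\big)$, each term being a linear function of $\bchi=d_{\bmu}^{k-1}$ through the Stokes and Biot solves; thus $Q$ is a nonnegative quadratic form on $\bL_h$ and $\|\mathcal G\bchi\|_{L^2(\Gamma_{fp})}^2=\|\bchi\|_{L^2(\Gamma_{fp})}^2-4\gamma Q(\bchi)$.

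The key step, and the one I expect to be the main obstacle, is to show that $Q$ is positive definite on $\bL_h$, turning this into a strict contraction. If $Q(\bchi)=0$, then by coercivity \eqref{coercivity} the Stokes velocity response vanishes, $d_{\u_f}=0$; substituting $d_{\u_f}=0$ into the homogeneous Stokes equations and applying \eqref{stokes-inf-sup} to test functions with vanishing trace on $\Gamma_{fp}$ forces $d_{p_f}=0$, whence the remaining relation $\langle\bchi,\bv_f\cdot(\n_f,\btau_f)\rangle_{\Gamma_{fp}}=0$ for all $\bv_f\in\V_{f,h}$, together with $\bL_h=\V_{f,h}|_{\Gamma_{fp}}$ (cf. \eqref{Lambda-h-defn}) and $\bchi\in\bL_h$, gives $\bchi=0$. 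As $\bL_h$ is finite-dimensional, $Q(\bchi)\ge q_0\|\bchi\|_{L^2(\Gamma_{fp})}^2$ for some $q_0\in(0,\tfrac{1}{4\gamma}]$, hence $\|d_{\bmu}^k\|_{L^2(\Gamma_{fp})}\le(1-4\gamma q_0)^{k/2}\|d_{\bmu}^0\|_{L^2(\Gamma_{fp})}$ and $\{\bmu^{k}\}$ is Cauchy, so $\bmu^{k}\to\bmu^{n+1}$ in $(L^2(\Gamma_{fp}))^2$; equivalently $\mathcal G$ has spectral radius strictly less than one. (Passing to the limit in the Stokes, Biot, and update equations then identifies the limit with the monolithic scheme; alternatively, if that scheme is already known to be well posed, the same computation applied to the errors $\phi^k-\phi^{n+1}$ gives convergence to its solution directly.)

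It remains to upgrade to convergence in the stated norms. For the difference quantities I would use: coercivity \eqref{coercivity} for $\V_f$ (from $\|d_{\u_f}^k\|_f\to0$), for $\X_p$ (from $\|d_{\bbeta_p}^k\|_e\to0$), and for the $L^2(\O_p)$-part of $\V_p$ (from $\|d_{\u_p}^k\|_d\to0$); the Stokes inf-sup \eqref{stokes-inf-sup}, as in \eqref{pf-stab}, for $d_{p_f}^k\to0$ in $W_f$; positivity of $s_0$ (or, $s_0$-uniformly, the Darcy inf-sup \eqref{darcy-inf-sup}) for $d_{p_p}^k\to0$ in $W_p$; the homogeneous Biot mass equation for $\|\div d_{\u_p}^k\|_{L^2(\O_p)}\to0$; and the projection-defect term of $Q$ together with the update \eqref{mu-strong} and $H^1$-trace estimates for $d_{\u_f}^k$ and $d_{\bbeta_p}^k$ for $\|d_{\u_p}^k\cdot\n_p\|_{L^2(\Gamma_{fp})}\to0$, which completes the convergence in $\V_p$ and hence the proof.
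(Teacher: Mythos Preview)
Your argument is correct and in fact sharper than the paper's. Both proofs start identically: form consecutive differences, observe they satisfy the homogeneous system, and run the energy computation of Theorem~\ref{stability} with the simplification you note (the frozen time-step data makes the inertial contributions pure squares rather than telescoping terms). The paper then sums the resulting identity over $k$ to obtain a convergent series $\sum_k Q(d_{\bmu}^{k-1})<\infty$, concludes that the individual differences tend to zero, and recovers $\ol\bmu^k\to0$ and $\ol\u_p^k\cdot\n_p\to0$ via discrete inverse/trace inequalities; it does not explicitly pass from ``differences $\to0$'' to ``sequences converge,'' relying implicitly on the fact that the energy argument holds for every initial difference in the finite-dimensional space $\bL_h$ (hence the spectral radius of the fixed iteration map is $<1$). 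Your route instead uses finite-dimensionality up front: you identify the one-step identity $\|d_{\bmu}^k\|^2=\|d_{\bmu}^{k-1}\|^2-4\gamma Q(d_{\bmu}^{k-1})$, prove the quadratic form $Q$ is positive definite on $\bL_h$ by the clean Stokes argument ($d_{\u_f}=0\Rightarrow d_{p_f}=0$ via \eqref{stokes-inf-sup} $\Rightarrow$ $\langle\bchi,\bv_f\rangle_{\Gamma_{fp}}=0$ for all $\bv_f\in\V_{f,h}$ $\Rightarrow$ $\bchi=0$ since $\bL_h=\V_{f,h}|_{\Gamma_{fp}}$), and obtain a genuine contraction with geometric rate. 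This directly yields Cauchy sequences and makes the convergence proof self-contained; it also gives a convergence rate the paper does not. Your recovery of $d_{\u_p}^k\cdot\n_p\to0$ via the projection-defect term and the update relation is likewise cleaner than the paper's discrete trace-inverse inequality, and your identity is exact precisely because you retain that defect term. One small point to make explicit in the final write-up: once $\{\bmu^k\}$ is Cauchy in $L^2(\Gamma_{fp})$, the geometric decay of $\|d_{\bmu}^{k-1}\|$ propagates through the bound $Q(d_{\bmu}^{k-1})\le\tfrac{1}{4\gamma}\|d_{\bmu}^{k-1}\|^2$ to give geometric decay of all the other differences in their respective norms, so every sequence is Cauchy in the stated norm, not merely has vanishing increments.
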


\begin{proof}
Let $\overline\u_f^{k+1} := \u_f^{k+1} - \u_f^k$ for $k \ge 1$ with a similar notation for the rest of the variables. Subtracting \eqref{stokes-weak-1_iter}--\eqref{mu-t-defn-iter} for $k+1$ and $k$ results in the equations in the Stokes region
\begin{align}
  & \frac{1}{\Delta t}\left(\rho_f {\overline\u_{f}^{k+1}},\bv_{f}\right)_{\O_f}+ a_{f}(\ol\u_{f}^{k+1},\bv_{f}) + b_f(\bv_{f},\ol p_{f}^{k+1}) + \gamma\<\ol\u_f^{k+1}\cdot\n_f,\bv_f\cdot\n_f\>_{\Gamma_{fp}}
  \nonumber \\
  & \qquad  + \gamma \< \ol\u_f^{k+1}\cdot\btau_{f},\bv_f\cdot\btau_{f}\>_{\Gamma_{fp}} =  \<\ol\mu_n^k,\bv_f\cdot\n_{f}\>_{\Gamma_{fp}}  + \< \ol\mu_{\tau}^k,\bv_f\cdot\btau_{f}\>_{\Gamma_{fp}}  \label{stokes-diff-1}\\
 & - b_f(\ol\u^{k+1}_{f},w_{f}) = 0, \label{stokes-diff-2}
\end{align} 
and in the Biot region
\begin{align}
  &\frac{1}{\Delta t^2}\big(\rho_p\ol\bbeta_{p}^{k+1},\bxi_{p}\big)_{\O_p} + a^d_{p}(\ol\u_{p}^{k+1},\bv_{p})
  + a^e_{p}(\ol\bbeta_{p}^{k+1},\bxi_{p}) + b_p(\bv_{p},\ol p_{p}^{k+1})
  + \alpha b_p(\bxi_{p},\ol p_{p}^{k+1}) \nonumber \\
  & \qquad  + \gamma\Big\<\big(\ol\u_p^{k+1} + \frac{1}{\Delta t}\ol\bbeta_p^{k+1}\big)\cdot\n_p,
        (\bv_p + \bxi_p)\cdot\n_p\Big\>_{\Gamma_{fp}}
  + \gamma\Big\<\frac{1}{\Delta t}\ol\bbeta_p^{k+1}\cdot\btau_p,\bxi_p\cdot\btau_p\Big\>_{\Gamma_{fp}}
  \nonumber \\
  & \quad = \<\ol\mu_n^k - 2\gamma\ol\u_f^{k+1}\cdot\n_f,
  (\bv_p + \bxi_p)\cdot\n_p\>_{\Gamma_{fp}}
  + \left\<\ol\mu_{\tau}^k - 2\gamma \ol\u_f^{k+1}\cdot\btau_f,
\bxi_p\cdot\btau_p\right\>_{\Gamma_{fp}}, \label{biot-diff-1} \\
& \frac{1}{\Delta t}( s_0 \ol p_{p}^{k+1},w_{p})_{\O_p}
- \alpha b_p\Big(\frac{1}{\Delta t}\ol\bbeta_{p}^{k+1},w_{p}\Big) - b_p(\ol\u_{p}^{k+1},w_{p}) = 0, \label{biot-diff-2}
\end{align}
as well as the updates
\begin{subequations}\label{mu-diff}
\begin{align}
  \<\ol\mu_n^{k+1},\chi_n\>_{\Gamma_{fp}} & = \Big\<\ol\mu_n^k -2\gamma\Big(\Big(\frac{1}{\Delta t}\ol\bbeta_p^{k+1}
  + \ol\u_p^{k+1}\Big)\cdot\n_p + \ol\u_f^{k+1}\cdot\n_f\Big),\chi_n\Big\>_{\Gamma_{fp}}, \label{mu-n-diff} \\
  \<\ol\mu_{\tau}^{k+1},\chi_\tau\>_{\Gamma_{fp}} & = \Big\<\ol\mu_\tau^k - 2\gamma\Big(
  \frac{1}{\Delta t}\ol\bbeta_p^{k+1} \cdot\btau_p + \ol\u_f^{k+1}\cdot\btau_f\Big),\chi_\tau\Big\>_{\Gamma_{fp}}.  \label{mu-t-diff}
\end{align}
\end{subequations}
Taking $\bv_f = \ol\u_f^{k+1}$, $w_f = \ol p_f^{k+1}$, $\bv_p = \ol\u_p^{k+1}$, $w_p = \ol p_p^{k+1}$, and $\bxi_p = \frac{1}{\Delta t}\ol\bbeta_p^{k+1}$  and following the argument in the proof of Theorem~\ref{stability}, we obtain
\begin{align*}
  & \sum_{k=1}^M\left(
  \frac{\rho_f}{\Delta t}(\ol\u_f^{k+1},\ol\u_f^{k+1})_{\O_f}
  + \frac{\rho_p}{\Delta t^2}(\ol\bbeta_p^{k+1},\ol\bbeta_p^{k+1})_{\Omega_p}
  + \frac{1}{\Delta t}a^e_{p}(\ol\bbeta_{p}^{k+1},\ol\bbeta_{p}^{k+1})
  + \frac{s_0}{\Delta t}(\ol p_{p}^{k+1},\ol p_p^{k+1})_{\O_p})
  \right.\\  
  & \qquad
  \left. +  a_{f}(\ol\u_{f}^{k+1},\ol\u_f^{k+1}) + a^d_{p}(\ol\u_{p}^{k+1},\ol\u_{p}^{k+1}) \right)
  + \frac{1}{4\gamma}\int_{\Gamma_{fp}} (\ol\mu_n^{M+1})^2
  + \frac{1}{4\gamma}\int_{\Gamma_{fp}} (\ol\mu_{\tau}^{M+1})^2 \\
& \quad  \le \frac{1}{4\gamma}\int_{\Gamma_{fp}} (\ol\mu_n^{1})^2
  + \frac{1}{4\gamma}\int_{\Gamma_{fp}} (\ol\mu_{\tau}^{1})^2,
\end{align*}
which implies that the series
\begin{align*}
  & \sum_{k=1}^\infty\left(
  \frac{\rho_f}{\Delta t}(\ol\u_f^{k+1},\ol\u_f^{k+1})_{\O_f}
  + \frac{\rho_p}{\Delta t^2}(\ol\bbeta_p^{k+1},\ol\bbeta_p^{k+1})_{\Omega_p}
  + \frac{1}{\Delta t}a^e_{p}(\ol\bbeta_{p}^{k+1},\ol\bbeta_{p}^{k+1})
  + \frac{s_0}{\Delta t}(\ol p_{p}^{k+1},\ol p_p^{k+1})_{\O_p})
  \right.\\  
  & \qquad
  \left. +  a_{f}(\ol\u_{f}^{k+1},\ol\u_f^{k+1}) + a^d_{p}(\ol\u_{p}^{k+1},\ol\u_{p}^{k+1}) \right)
\end{align*}  
is convergent. Therefore $\ol\u_{f}^{k} \to 0$ in $\V_f$, $\ol\bbeta_{p}^{k} \to 0$ in $\X_p$,
$\ol\u_{p}^{k} \to 0$ in $(L^2(\Omega_p))^d$, and $\ol p_p^k \to 0$ in $W_p$. Using the inf-sup condition \eqref{stokes-inf-sup}, we conclude from \eqref{stokes-diff-1} that $\ol p_f^k \to 0$ in $W_f$.

Next, taking $w_p = \nabla\cdot\ol\u_p^{k+1}$ in \eqref{biot-diff-2} implies that $\nabla\cdot\ol\u_p^{k} \to 0$ in $L^2(\Omega_p)$. Also, the convergence $\ol\u_p^{k}\cdot\n_p \to 0$ in $L^2(\Gamma_{fp})$
follows from $\ol\u_{p}^{k} \to 0$ in $(L^2(\Omega_p))^d$ and the discrete trace-inverse inequality $\|\bv_p\cdot\n_p\|_{L^2(\Gamma_{fp})} \le C h^{-1/2}\|\bv_p\|_{L^2(\Omega_p)}$ for $\bv_p \in \V_p$. Therefore $\ol\u_p^{k} \to 0$ in $\V_p$.

Finally, since $\bL_{h} = \V_{f,h}|_{\Gamma_{fp}}$,
we can take $\bv_f = {\bf E}_{f,h}\ol\bmu^k$ in \eqref{stokes-diff-1}, where ${\bf E}_{f,h}$ is the continuous discrete Stokes extension utilized in \eqref{mu-stab}, which implies
$$
\|\ol\bmu^k\|_{L^2(\Gamma_{fp})}^2 \le C(\|\ol\u_f^{k+1}\|_{H^1(\Omega_f)} + \|\ol p_f^{k+1}\|_{L^2(\Omega_f)})\|\ol\bmu^k\|_{H^{1/2}(\Gamma_{fp})}.
$$
Thus, using that $\|\ol\bmu^k\|_{H^{1/2}(\Gamma_{fp})} \le C h^{-1/2}\|\ol\bmu^k\|_{L^2(\Gamma_{fp})}$, we conclude that
$\ol\bmu^k \to 0$ in $(L^2(\Gamma_{fp}))^2$.
\end{proof}

\subsection{Monolithic scheme}

Using the convergence established in Theorem~\ref{iter-semidiscrete-converge}, we can take $k \to \infty$ in \eqref{stokes-weak-1_iter}--\eqref{mu-t-defn-iter} to conclude that the limit functions satisfy the following fully coupled fully implicit scheme: find $(\u_{f}^{n+1}, p_{f}^{n+1}) \in \V_{f,h}\times W_{f,h}$, $(\bbeta_{p}^{n+1},\u_p^{n+1},p_{p}^{n+1}) \in \X_{p,h}\times\V_{p,h}\times W_{p,h}$, and $\bmu_n^{n+1} \in \bL_{h}$ such that for all $(\bv_f,w_f) \in \V_{f,h}\times W_{f,h}$, $(\bxi_p,\bv_p,w_p) \in \X_{p,h}\times\V_{p,h}\times W_{p,h}$, and $\bchi \in \bL_h$,
%
\begin{align}
  & \left(\rho_f \dt \u_{f}^{n+1},\bv_{f}\right)_{\O_f} + a_{f}(\u_{f}^{n+1},\bv_{f}) + b_f(\bv_{f},p_{f}^{n+1}) + \gamma\<\u_f^{n+1}\cdot\n_f,\bv_f\cdot\n_f\>_{\Gamma_{fp}}
  \nonumber \\
  & \qquad  +  \gamma \<\u_f^{n+1}\cdot\btau_{f},\bv_f\cdot\btau_{f}\>_{\Gamma_{fp}} = (\f_{f},\bv_{f})_{\O_f} + \<\mu_n^{n+1},\bv_f\cdot\n_{f}\>_{\Gamma_{fp}} 
   + \<\mu_{\tau}^{n+1},\bv_f\cdot\btau_{f}\>_{\Gamma_{fp}}, \label{stokes-coupled-1}\\
 & - b_f(\u^{n+1}_{f},w_{f}) = (q_{f},w_{f})_{\O_f}, \label{stokes-coupled-2}\\
  &\big(\rho_p\dtt\bbeta_{p}^{n+1},\bxi_{p}\big)_{\O_p}
  + a^e_{p}(\bbeta_{p}^{n+1},\bxi_{p})
  + a^d_{p}(\u_{p}^{n+1},\bv_{p})
  + \alpha b_p(\bxi_{p},p_{p}^{n+1})
  + b_p(\bv_{p},p_{p}^{n+1})
  \nonumber \\
  & \qquad\qquad  + \gamma\<(\u_p^{n+1} + \dt\bbeta_p^{n+1})\cdot\n_p,
        (\bv_p + \bxi_p)\cdot\n_p\>_{\Gamma_{fp}}
  + \gamma\<\dt\bbeta_p^{n+1}\cdot\btau_p,\bxi_p\cdot\btau_p\>_{\Gamma_{fp}}
  \nonumber \\
  & \qquad = (\f_{p},\bxi_{p})_{\O_p}
  + \<\mu_n^{n+1} - 2\gamma\u_f^{n+1}\cdot\n_f,
  (\bv_p + \bxi_p)\cdot\n_p\>_{\Gamma_{fp}} \nonumber \\
  & \qquad\qquad + \left\<\mu_{\tau}^{n+1}
  - 2\gamma\u_f^{n+1}\cdot\btau_f,\bxi_p\cdot\btau_p\right\>_{\Gamma_{fp}},
  \label{biot-coupled-1} \\
& s_0(\dt p_{p}^{n+1},w_{p})_{\O_p} - \alpha b_p(\dt\bbeta_{p}^{n+1},w_{p}) - b_p(\u_{p}^{n+1},w_{p}) = (q_{p},w_{p})_{\O_p}, \label{biot-coupled-2}\\
  & \<(\dt \bbeta_p^{n+1} + \u_p^{n+1})\cdot\n_p + \u_f^{n+1}\cdot\n_f,\chi_n\>_{\Gamma_{fp}} = 0,
  \label{mu-n-coupled} \\
  & \<\dt \bbeta_p^{n+1} \cdot\btau_p + \u_f^{n+1}\cdot\btau_f,\chi_\tau\>_{\Gamma_{fp}} = 0.
  \label{mu-t-coupled} 
\end{align}
%
We note that the Robin data variables $\mu_n^{n+1}$ and $\mu_\tau^{n+1}$ play the role of Lagrange multipliers to impose weakly the velocity continuity conditions \eqref{eq:mass-conservation} and \eqref{Gamma-fp-1} in \eqref{mu-n-coupled}--\eqref{mu-t-coupled}.
In addition, since the Stokes solution satisfies weakly the Robin boundary conditions
$$
\gamma\u_f^{n+1}\cdot\n_f + (\bs_f^{n+1} \n_f)\cdot\n_f = \mu_n^{n+1} \quad \text{and} \quad \gamma\u_f^{n+1}\cdot\btau_{f} +  (\bs_f^{n+1}\n_f)\cdot\btau_{f} = \mu_{\tau}^{n+1}
$$
and the Biot solution satisfies weakly the Robin boundary conditions
\begin{align*}
& \gamma(\u_p^{n+1} + \dt\bbeta_p^{n+1})\cdot\n_p + (\bs_p^{n+1} \n_p)\cdot\n_p
= \mu_n^{n+1} - 2\gamma\u_f^{n+1}\cdot\n_f, \\
&
\gamma \dt \bbeta_p^{n+1}\cdot\btau_{p} + (\bs_p^{n+1}\n_p)\cdot\btau_{p}
= \mu_{\tau}^{n+1} - 2\gamma\u_f^{n+1}\cdot\btau_f, \\
&\gamma(\u_p^{n+1} + \dt\bbeta_p^{n+1})\cdot\n_p - p_p^{n+1}
 = \mu_n^{n+1} - 2\gamma\u_f^{n+1}\cdot\n_f,
\end{align*}
it follows that conditions
$$
\bs_f^{n+1} \n_f + \bs_p^{n+1} \n_p = \boldsymbol{0} \quad \text{and} \quad -(\bs_f^{n+1} \n_f)\cdot\n_f = p_p^{n+1}
$$
are satisfied weakly, i.e., the balance of stress conditions \eqref{balance-stress} hold weakly for the solution of the method.

To the best of our knowledge, the fully implicit scheme \eqref{stokes-coupled-1}--\eqref{mu-t-coupled} has not been studied in the literature. The argument above proves existence of a solution. We next establish uniqueness and stability. To this end, we define a modified energy term
$$
\widehat{\mathcal{E}}^n = \frac{\rho_f}{2}\|\u_f^n\|_{L^2(\O_f)}^2 + \frac{\rho_p}{2}\|\dt\bbeta_{p}^n\|_{L^2(\O_p)}^2 + \frac{1}{2}\|\bbeta_p^n\|_e^2
  + \frac{s_0}{2}\|p_{p}^n\|_{L^2(\O_p)}^2,
$$
  which is the energy term $\mathcal{E}^n$ from \eqref{En-energy} without the terms involving $\mu_n^n$ and $\mu_\tau^n$. Again for simplicity we let $\f_f = \f_p = q_f = q_p = 0$.

\begin{theorem}
The method \eqref{stokes-coupled-1}--\eqref{mu-t-coupled} has a unique solution satisfying the energy equality
\begin{equation}\label{energy-coupled}
\widehat{\mathcal{E}}^N + \Delta t \sum_{n=1}^N \mathcal{D}^n + \sum_{n=1}^N \mathcal{S}^n \le \widehat{\mathcal{E}}^0.
\end{equation}
\end{theorem}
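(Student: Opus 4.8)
The plan is to handle the three assertions of the statement separately. Existence is already in hand: by Theorem~\ref{iter-semidiscrete-converge} the Richardson iterates converge, and the discussion preceding the statement records that their limit solves \eqref{stokes-coupled-1}--\eqref{mu-t-coupled}. So what remains is to derive \eqref{energy-coupled} and to prove uniqueness, and both will follow from a single energy identity obtained along the lines of the proof of Theorem~\ref{stability}. The only genuinely new point is the treatment of the interface terms, which is in fact cleaner here because there is no $\bmu^n \to \bmu^{n+1}$ update to carry along.

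For the energy identity I would test \eqref{stokes-coupled-1}--\eqref{stokes-coupled-2} with $(\bv_f,w_f)=(\u_f^{n+1},p_f^{n+1})$, test \eqref{biot-coupled-1}--\eqref{biot-coupled-2} with $(\bxi_p,\bv_p,w_p)=(\dt\bbeta_p^{n+1},\u_p^{n+1},p_p^{n+1})$, test \eqref{mu-n-coupled}--\eqref{mu-t-coupled} with $\bchi=\bmu^{n+1}$ (admissible since $\bmu^{n+1}\in\bL_h$), and add the four relations. In the interior this is routine: $b_f(\u_f^{n+1},p_f^{n+1})$, $b_p(\u_p^{n+1},p_p^{n+1})$, and $\alpha b_p(\dt\bbeta_p^{n+1},p_p^{n+1})$ cancel against the divergence and pressure equations; the discrete time-derivative terms are rewritten with \eqref{identity-A2}, producing telescoping energy differences plus the nonnegative square terms collected in $\mathcal{S}^{n+1}$; and $a_f$, $a_p^d$ give $\|\u_f^{n+1}\|_f^2$ and $\|\u_p^{n+1}\|_d^2$. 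The interface terms require bookkeeping: set $\sigma:=(\dt\bbeta_p^{n+1}+\u_p^{n+1})\cdot\n_p+\u_f^{n+1}\cdot\n_f$ and $\delta:=\dt\bbeta_p^{n+1}\cdot\btau_p+\u_f^{n+1}\cdot\btau_f$. After moving all interface contributions to the left, expanding the squares, and absorbing the $2\gamma$ cross terms coming from the $-2\gamma\u_f^{n+1}\cdot\n_f$ and $-2\gamma\u_f^{n+1}\cdot\btau_f$ factors in \eqref{biot-coupled-1}, the Stokes and Biot interface terms combine into $\gamma\|\sigma\|_{L^2(\Gamma_{fp})}^2+\gamma\|\delta\|_{L^2(\Gamma_{fp})}^2-\<\mu_n^{n+1},\sigma\>_{\Gamma_{fp}}-\<\mu_\tau^{n+1},\delta\>_{\Gamma_{fp}}$. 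But the last two pairings are precisely the left-hand sides of \eqref{mu-n-coupled}--\eqref{mu-t-coupled} evaluated at $\bchi=\bmu^{n+1}$, hence they vanish, leaving the nonnegative quantity $\gamma\|\sigma\|_{L^2(\Gamma_{fp})}^2+\gamma\|\delta\|_{L^2(\Gamma_{fp})}^2$. Multiplying by $\Delta t$ and summing over $n=0,\dots,N-1$ then yields \eqref{energy-coupled}, in fact as an identity with these two interface dissipation sums retained on the left (dropping them gives the stated inequality).

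For uniqueness, note that for fixed data at $t_n$ the scheme \eqref{stokes-coupled-1}--\eqref{mu-t-coupled} is linear, so it suffices to show that the homogeneous problem (vanishing forcing and $\u_f^n=\bbeta_p^n=\dt\bbeta_p^n=p_p^n=0$) has only the zero solution. Applying the identity above to this case, every term on the left is nonnegative and their sum is zero; hence $\|\u_f^{n+1}\|_f^2=\|\u_p^{n+1}\|_d^2=\|\bbeta_p^{n+1}\|_e^2=s_0\|p_p^{n+1}\|_{L^2(\Omega_p)}^2=0$, so by the coercivity bounds \eqref{coercivity} and $s_0>0$ we obtain $\u_f^{n+1}=\boldsymbol{0}$, $\u_p^{n+1}=\boldsymbol{0}$, $\bbeta_p^{n+1}=\boldsymbol{0}$, $p_p^{n+1}=0$. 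Then $p_f^{n+1}=0$ follows from the Stokes inf-sup condition \eqref{stokes-inf-sup} applied to \eqref{stokes-coupled-1} with test functions vanishing on $\Gamma_{fp}$, and finally $\bmu^{n+1}=\boldsymbol{0}$ by taking $\bv_f={\bf E}_{f,h}\bmu^{n+1}$ in \eqref{stokes-coupled-1}, exactly as in the closing step of the proof of Theorem~\ref{iter-semidiscrete-converge}. I do not anticipate a real obstacle; the only delicate point is the sign accounting in the interface integrals and the observation that the weak velocity-continuity equations \eqref{mu-n-coupled}--\eqref{mu-t-coupled} are precisely what annihilates the Lagrange-multiplier pairings when the multiplier itself is used as the test function.
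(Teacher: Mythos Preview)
Your proposal is correct and follows essentially the same approach as the paper: the same test functions, the same use of \eqref{identity-A2} for the time-derivative terms, and the same inf-sup/extension arguments (borrowed from Theorem~\ref{iter-semidiscrete-converge}) for uniqueness of $p_f^{n+1}$ and $\bmu^{n+1}$. Your explicit combination of the interface terms into $\gamma\|\sigma\|^2+\gamma\|\delta\|^2-\<\mu_n^{n+1},\sigma\>-\<\mu_\tau^{n+1},\delta\>$, with the last two pairings killed by testing the constraint equations with $\bchi=\bmu^{n+1}$, is a slightly more direct route than the paper's appeal to ``in a way similar to \eqref{stokes-energy} and \eqref{biot-energy-1},'' but the two computations are equivalent (indeed $\sigma=(I-P_{\bL_h})[(\u_p^{n+1}+\dt\bbeta_p^{n+1})\cdot\n_p]$ via the constraint, recovering the projection residual that the paper's route would drop).
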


\begin{proof}
  We take $\bv_f = \u_f^{n+1}$, $w_f = p_f^{n+1}$, $\bv_p = \u_p^{n+1}$, $w_p = p_p^{n+1}$, $\bxi_p = \dt\bbeta_p^{n+1}$, $\chi_n = \mu_n^{n+1}$, and $\chi_\tau = \mu_\tau^{n+1}$ in \eqref{stokes-coupled-1}--\eqref{mu-t-coupled} and sum the equations. In a way similar to \eqref{stokes-energy} and \eqref{biot-energy-1}, we obtain
\begin{align*}
&  \big(\rho_f \dt \u_{f}^{n+1},\u_{f}^{n+1}\big)_{\O_f} + a_{f}(\u_{f}^{n+1},\u_{f}^{n+1})
  + \big(\rho_p\dtt\bbeta_{p}^{n+1},\dt\bbeta_{p}^{n+1}\big)_{\O_p}
  + a^e_{p}(\bbeta_{p}^{n+1},\dt\bbeta_{p}^{n+1}) \\
& \qquad  + a^d_{p}(\u_{p}^{n+1},\u_{p}^{n+1}) + s_0(\dt p_{p}^{n+1},p_{p}^{n+1})_{\O_p} \le 0,
\end{align*}
We remark that all terms involving $\mu_n^{n+1}$ and $\mu_\tau^{n+1}$ cancel out. Then \eqref{energy-coupled} follows by using \eqref{identity-A2}, multiplying by $\Delta t$, and summing over $n$.

The energy balance \eqref{energy-coupled} implies uniqueness for $\u_{f}^n$, $\bbeta_{p}^{n}$, $\u_p^n$, and $p_p^n$. Uniqueness for $p_f^n$ and $\bmu^n$ follows from the argument used to establish that $\ol p_f^k \to 0$ and $\ol\bmu^k \to 0$ in the proof of Theorem~\ref{iter-semidiscrete-converge}.
\end{proof}

\section{Numerical results}\label{sec:num_res}
This section presents results from two numerical tests in two dimensions. We start with checking the convergence rates in time for the Robin-Robin algorithm \eqref{stokes-weak-1}--\eqref{mu-defn},
its iterative version \eqref{stokes-weak-1_iter}--\eqref{eq:stopping}, and
the monolithic scheme \eqref{stokes-coupled-1}--\eqref{mu-t-coupled}.
The convergence test is also used to assess the robustness of the Robin-Robin algorithm in both the 
non-iterative and iterative versions to changes in the value of the Robin parameter. Next, we consider a simplified blood flow problem to illustrate the behavior of the methods for a computationally challenging choice of physical parameters.

All results have been obtained with FreeFem++ \cite{Hecht}, using triangular grids. For spatial discretization we use the following finite element spaces: the Taylor-Hood continuous $\mathcal{P}_2-\mathcal{P}_1$ elements for the velocity--pressure pair in the Stokes problem, the Raviart-Thomas $\mathcal{RT}_1-\mathcal{P}_1^{dc}$ elements for the Darcy velocity and pressure, and continuous $\mathcal{P}_2$ elements for the structure displacement and the trace function $\mu$. 

\subsection{Example 1: convergence test}

In order to check the convergence rates in time, we 
consider an analytical solution in domains 
$\Omega_f = (0, 1)\times
(0, 1)$ and $\Omega_p=(0,1)\times (-1,0)$ with interface
$\Gamma_{fp}= (0,1)\times \{0\}$ over 
time interval $(0,1]$. 
We take $\Gamma_f^D = (0,1)\times \{1\}$, $\Gamma_f^N = \{0\}\times(0,1) \cup \{1\} \times (0,1)$, $\Gamma_p^D = \tilde\Gamma_p^D = (0,1)\times \{-1\}$, and $\Gamma_p^N = \tilde\Gamma_p^N = \{0\}\times(-1,0) \cup \{1\} \times (-1,0) $.
See the computational domain in Figure~\ref{fig:example1} (left) and the analytical solution in Figure~\ref{fig:example1} (right).
We note that the analytical solution satisfies the appropriate interface conditions on $\Gamma_{fp}$.

\begin{figure}[ht]
\begin{minipage}{.49\textwidth}
\centering
\begin{overpic}[width=.40\textwidth, grid=false]{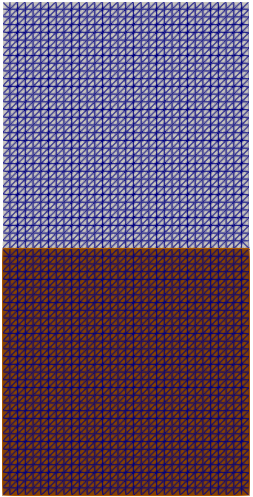}
\put(-9,72){$\Gamma_{f}^N$}
\put(52,72){$\Gamma_{f}^N$}
\put(25,103){$\Gamma_{f}^D$}
\put(52,48){$\Gamma_{fp}$}
\put(-9,23){$\Gamma_{p}^N$}
\put(52,23){$\Gamma_{p}^N$}
\put(25,-6){$\Gamma_{p}^D$}
\put(23,70){\textcolor{white}{$\Omega_f$}}
\put(23,20){\textcolor{white}{$\Omega_p$}}
\end{overpic}
\end{minipage}
\hfill
\begin{minipage}{.5\textwidth}
\bigskip
$ \ds \u_f = \pi\cos(\pi t)
\begin{pmatrix}
\ds -3 x + \cos( \, y) \\[1ex] \ds y+1
\end{pmatrix}$,\\
\bigskip
$ \ds p_f = \exp(t)\,\sin(\pi x)\cos\Big(\frac{\pi y}{2}\Big) + 2\pi \cos(\pi t)$,

$\ds p_p = \exp(t)\,\sin(\pi x)\cos\Big(\frac{\pi y}{2}\Big)$,

$\ds \u_p = -\frac{1}{\mu_{f}} K \grad p_p $,

$ \ds \bbeta_p = \sin(\pi t) \begin{pmatrix} \ds -3x+\cos(y) \\[1ex] \ds y+1 \end{pmatrix}$.
\end{minipage}
\vspace{0.2cm}
\caption{Example 1, left: computational domain and mesh; right: analytical solution.}
\label{fig:example1}
\end{figure}

The model parameters are set as follows:
$\mu_f = 1$, $\rho_f = 1$, $\rho_p = 1$,
$\mu_p = 1$,  $\lambda_p = 1$, $s_0 = 1$, $K = \I_{2\times2}$, $\alpha = 1$,  $\gamma_{BJS} = 0$, $\gamma_f = \gamma_p = \gamma = 1$. The forcing terms $\f_f,  q_f,  \f_p$ and $q_p$ are found by plugging the analytical solution in \eqref{stokes2}--\eqref{eq:biot2}.
Similarly, appropriate data for the Dirichlet and Neumann boundary conditions and initial conditions are derived from the exact solution. 

The structured mesh used in the convergence study, obtained by setting the mesh size $h$ to $1/32$, is shown in Figure~\ref{fig:example1} (left).
The choice of the mesh size is so that the spatial discretization error does not affect the convergence rates in time. 

For the time convergence study, we consider the time interval $[0,1]$ and a sequence of progressively smaller time steps: $\Delta t = 0.2, 0.1, 0.05, 0.025, 0.0125$. 
Table~\ref{tab:Errors-Convergence rates-1} reports numerical errors for the Stokes, Biot, and auxiliary interface variables in the space-time norms bounded in the analysis, as well as
the corresponding convergence rates in time for the non-iterative Robin-Robin algorithm. We note that the error for the interface variable $\bmu$ is reported in the $L^2(\Gamma_{fp})$ norm, which is stronger, but easier to compute than the $H^{-1/2}(\Gamma_{fp})$ norm that appears in the analysis.
As the time step gets smaller, we observe that the rate of convergence
approaches one (i.e., the expected rate)
for all variables. Furthermore, 
Tables~\ref{tab:Errors-Convergence rates-2} and 
\ref{tab:Errors-Convergence rates-3} report the 
errors and rates for the iterative Robin-Robin algorithm and the monolithic scheme, respectively.
We also present the results from the iterative Robin-Robin algorithm with a fixed number of 10 iterations at each time step in Table~\ref{tab:maxiter10}.
First order convergence in time is observed for all variables for all methods.
To ease the comparison for selected variables, 
namely $\u_f$, $\u_p$, $\bbeta_p$, Figure~\ref{fig:conv_plots} (first row)
shows the convergence plots. 
We see that, as one would expect, the errors in the iterative Robin-Robin algorithm are indistinguishable from the errors in the monolithic scheme (identical for the number of digits reported in the tables), while the errors in the non-iterative Robin-Robin method are slightly larger. We recall that, in the case of the iterative algorithm, the increased accuracy
comes with an increased computational cost, since every time step requires the solution of multiple Stokes and Biot problems; see the last column in Table~\ref{tab:Errors-Convergence rates-2}
for the average number of iterations required to satisfy the stopping criterion \eqref{eq:stopping}, with the maximum number of iterations set to 100. However, we also observe from Table~\ref{tab:maxiter10} and Figure~\ref{fig:conv_plots} (first row) that even not running the iterative scheme to convergence, but only taking a small number of iterations (10), also gives results very close to the monolithic scheme. 

\begin{table}[htb!]
\centering{}%
\begin{tabular}{|c|c|c|c|c|c|c|c|c|}
\hline
{$\Delta t$} & \multicolumn{2}{c|}{{$\|e_{\u_{f}}\|_{L^{\infty}(H^1(\Omega_f))}$}}
&
\multicolumn{2}{c|}
{{$\|e_{p_{f}}\|_{L^{2}(L^2(\Omega_f))}$}}
& \multicolumn{2}{c|}{{$\|e_{\u_{p}}\|_{L^{2}(H(div;\Omega_p))}$}}
& \multicolumn{2}{c|}{{$\|e_{p_{p}}\|_{L^{\infty}(L^2(\Omega_p))}$}}
\tabularnewline
		\hline
		{\small{}0.2} & {\small{}1.663e+00   } & {\small{}Rate} & {\small{} 1.706e+00  } & {\small{}Rate} & {\small{} 1.800e+00   } & {\small{}Rate} & {\small{}3.112e-01  } & {\small{}Rate}\tabularnewline
		
		{\small{}0.1} & {\small{} 9.071e-01  } & {\small{}0.87 } & {\small{}  8.999e-01    } & {\small{}0.92 } & {\small{} 1.046e+00  } & {\small{}0.78 } & {\small{}  1.827e-01  } & {\small{} 0.72}\tabularnewline
		
		{\small{}0.05} & {\small{}4.768e-01  } & {\small{}0.92 } & {\small{}4.640e-01  } & {\small{} 0.95} & {\small{}5.825e-01   } & {\small{}0.84 } & {\small{} 1.023e-01    } & {\small{}0.83 } \tabularnewline
		
		{\small{}0.025} & {\small{}2.449e-01     } & {\small{}0.96 } & {\small{} 2.360e-01  } & {\small{}0.97 } & {\small{} 3.113e-01   } & {\small{} 0.90} & {\small{} 5.497e-02    } & {\small{}0.89 } \tabularnewline
		
		{\small{}0.0125} & {\small{} 1.247e-01    } & {\small{}0.97  } & {\small{} 1.191e-01     } & {\small{}0.98 } & {\small{}1.617e-01   } & {\small{}0.94  } & {\small{}2.855e-02     } & {\small{} 0.94 }\tabularnewline
		\hline
\end{tabular}
%
\centering{}%
\begin{tabular}{|c|c|c|c|c|c|c|c|}
\hline
{\small{}$\Delta t$} & \multicolumn{2}{c|}{{\small{}$\|e_{\bbeta_p}\|_{L^{\infty}(H^1(\Omega_p))}$}}
&\multicolumn{2}{c|}
{{\small{}$\|e_{\partial_{t}\bbeta_p}\|_{L^{\infty}(L^2(\Omega_p))}$}}
& \multicolumn{2}{c|}{{\small{}$\|e_{\mu}\|_{L^{\infty}(L^2(\Gamma_{fp}))}$}}
\tabularnewline
		\hline
		{\small{}0.2} & {\small{}1.966e+00  } & {\small{}Rate} & {\small{}1.578e+00   } & {\small{}Rate} & {\small{} 2.369e+00    } & {\small{}Rate} \tabularnewline
		
		{\small{}0.1} & {\small{} 1.183e+00  } & {\small{}0.73 } & {\small{} 8.996e-01    } & {\small{}0.81 } & {\small{} 1.311e+00    } & {\small{}0.85 } \tabularnewline
		
		{\small{}0.05} & {\small{}   6.675e-01  } & {\small{}0.82 } & {\small{}4.808e-01    } & {\small{} 0.90 } & {\small{} 6.857e-01       } & {\small{}0.93 }  \tabularnewline
		
		{\small{}0.025} & {\small{}3.589e-01    } & {\small{}0.89 } & {\small{}  2.491e-01  } & {\small{}0.94 } & {\small{} 3.479e-01  } & {\small{}0.97 }  \tabularnewline
		
		{\small{}0.0125} & {\small{}  1.868e-01     } & {\small{}0.94 } & {\small{} 1.270e-01    } & {\small{}0.97  } & {\small{}1.745e-01   } & {\small{} 0.99 } \tabularnewline
		\hline
	\end{tabular}
\caption{Example 1, numerical errors and convergence rates in time 
  for the non-iterative Robin-Robin algorithm.} \label{tab:Errors-Convergence rates-1}
\end{table}

\begin{table}[htb!]
\centering{}%
\begin{tabular}{|c|c|c|c|c|c|c|c|c|}
\hline
{$\Delta t$} & \multicolumn{2}{c|}{{$\|e_{\u_{f}}\|_{L^{\infty}(H^1(\Omega_f))}$}}
&
\multicolumn{2}{c|}
{{$\|e_{p_{f}}\|_{L^{2}(L^2(\Omega_f))}$}}
& \multicolumn{2}{c|}{{$\|e_{\u_{p}}\|_{L^{2}(H(div;\Omega_p))}$}}
& \multicolumn{2}{c|}{{$\|e_{p_{p}}\|_{L^{\infty}(L^2(\Omega_p))}$}}
\tabularnewline
		\hline
		{\small{}0.2} & {\small{} 1.233e+00    } & {\small{}Rate} & {\small{} 1.537e+00   } & {\small{}Rate} & {\small1.730e+00    } & {\small{}Rate} & {\small{}  2.855e-01  } & {\small{}Rate}\tabularnewline
		
		{\small{}0.1} & {\small{}  6.481e-01   } & {\small{}0.92 } & {\small{}  7.809e-01   } & {\small{}0.97 } & {\small{}1.005e+00   } & {\small{} 0.78} & {\small{}1.700e-01    } & {\small{}0.74 }\tabularnewline
		
		{\small{}0.05} & {\small{} 3.331e-01    } & {\small{}0.96 } & {\small{} 3.936e-01    } & {\small{}0.98 } & {\small{}  5.602e-01  } & {\small{} 0.84} & {\small{}   9.646e-02  } & {\small{}0.81 } \tabularnewline
		
		{\small{}0.025} & {\small{} 1.686e-01   } & {\small{}0.98 } & {\small{} 1.977e-01    } & {\small{} 0.99 } & {\small{} 2.998e-01   } & {\small{}0.90 } & {\small{} 5.169e-02   } & {\small{}0.90 } \tabularnewline
		
		{\small{}0.0125} & {\small{} 8.473e-02  } & {\small{} 0.99  } & {\small{}  9.911e-02    } & {\small{}0.99 } & {\small{}1.559e-01    } & {\small{} 0.94 } & {\small{} 2.686e-02 } & {\small{} 0.94} \tabularnewline
		\hline
\end{tabular}
%
\centering{}%
\begin{tabular}{|c|c|c|c|c|c|c|c|c|}
\hline
{\small{}$\Delta t$} & \multicolumn{2}{c|}{{\small{}$\|e_{\bbeta_p}\|_{L^{\infty}(H^1(\Omega_p))}$}}
&\multicolumn{2}{c|}
{{\small{}$\|e_{\partial_{t}\bbeta_p}\|_{L^{\infty}(L^2(\Omega_p))}$}}
& \multicolumn{2}{c|}{{\small{}$\|e_{\mu}\|_{L^{\infty}(L^2(\Gamma_{fp}))}$}}&{\small{}\# iter}
\tabularnewline
		\hline
		{\small{}0.2} & {\small{}1.520e+00  } & {\small{}Rate} & {\small{} 1.553e+00   } & {\small{}Rate} & {\small{}  1.853e+00       } & {\small{}Rate} & {\small{}96.60 }\tabularnewline
		
		{\small{}0.1} & {\small{}8.827e-01    } & {\small{}0.78 } & {\small{}   8.933e-01     } & {\small{} 0.79} & {\small{}9.848e-01     } & {\small{}0.91 }  &{\small{}89.20 }\tabularnewline
		
		{\small{}0.05} & {\small{} 4.938e-01     } & {\small{}0.83 } & {\small{} 4.803e-01     } & {\small{}0.89 } & {\small{}5.123e-01       } & {\small{}0.94} &{\small{}76.50 }\tabularnewline
		
		{\small{}0.025} & {\small{} 2.659e-01   } & {\small{}0.89 } & {\small{} 2.497e-01   } & {\small{}0.94 } & {\small{}2.625e-01       } & {\small{} 0.96}  &{\small{} 65.45}\tabularnewline
		
		{\small{}0.0125} & {\small{}  1.388e-01      } & {\small{}0.93  } & {\small{} 1.276e-01      } & {\small{}0.96 } & {\small{}   1.337e-01   } & {\small{} 0.97 }  &{\small{}55.10  }\tabularnewline
		\hline
	\end{tabular}
\caption{Example 1, numerical errors and convergence rates in time 
  for the iterative Robin-Robin algorithm. The last column reports the average number of iterations required to satisfy the stopping criterion \eqref{eq:stopping}.}
\label{tab:Errors-Convergence rates-2}
\end{table}

\begin{table}[htb!]
\centering{}%
\begin{tabular}{|c|c|c|c|c|c|c|c|c|}
\hline
{$\Delta t$} & \multicolumn{2}{c|}{{$\|e_{\u_{f}}\|_{L^{\infty}(H^1(\Omega_f))}$}}
&
\multicolumn{2}{c|}
{{$\|e_{p_{f}}\|_{L^{2}(L^2(\Omega_f))}$}}
& \multicolumn{2}{c|}{{$\|e_{\u_{p}}\|_{L^{2}(H(div;\Omega_p))}$}}
& \multicolumn{2}{c|}{{$\|e_{p_{p}}\|_{L^{\infty}(L^2(\Omega_p))}$}}
\tabularnewline
		\hline
		{\small{}0.2} & {\small{} 1.233e+00    } & {\small{}Rate} & {\small{} 1.537e+00   } & {\small{}Rate} & {\small1.730e+00    } & {\small{}Rate} & {\small{}  2.855e-01  } & {\small{}Rate}\tabularnewline
		
		{\small{}0.1} & {\small{}  6.481e-01   } & {\small{}0.92 } & {\small{}  7.809e-01   } & {\small{}0.97 } & {\small{}1.005e+00   } & {\small{} 0.78} & {\small{}1.700e-01    } & {\small{}0.74 }\tabularnewline
		
		{\small{}0.05} & {\small{} 3.331e-01    } & {\small{}0.96 } & {\small{} 3.936e-01    } & {\small{}0.98 } & {\small{}  5.602e-01  } & {\small{} 0.84} & {\small{}   9.646e-02  } & {\small{}0.81 } \tabularnewline
		
		{\small{}0.025} & {\small{} 1.686e-01   } & {\small{}0.98 } & {\small{} 1.977e-01    } & {\small{} 0.99 } & {\small{} 2.998e-01   } & {\small{}0.90 } & {\small{} 5.169e-02   } & {\small{}0.90 } \tabularnewline
		
		{\small{}0.0125} & {\small{} 8.473e-02  } & {\small{} 0.99  } & {\small{}  9.911e-02    } & {\small{}0.99 } & {\small{}1.559e-01    } & {\small{} 0.94 } & {\small{} 2.686e-02 } & {\small{} 0.94} \tabularnewline
		\hline
\end{tabular}

%
\centering{}%
\begin{tabular}{|c|c|c|c|c|c|c|c|}
\hline
{\small{}$\Delta t$} & \multicolumn{2}{c|}{{\small{}$\|e_{\bbeta_p}\|_{L^{\infty}(H^1(\Omega_p))}$}}
&\multicolumn{2}{c|}
{{\small{}$\|e_{\partial_{t}\bbeta_p}\|_{L^{\infty}(L^2(\Omega_p))}$}}
& \multicolumn{2}{c|}{{\small{}$\|e_{\mu}\|_{L^{\infty}(L^2(\Gamma_{fp}))}$}}
\tabularnewline
		\hline
		{\small{}0.2} & {\small{}1.520e+00  } & {\small{}Rate} & {\small{} 1.553e+00   } & {\small{}Rate} & {\small{} 1.865e+00 } & {\small{}Rate} \tabularnewline
		
		{\small{}0.1} & {\small{}8.827e-01    } & {\small{}0.78 } & {\small{}   8.933e-01     } & {\small{} 0.79} & {\small{} 9.887e-01} & {\small{}0.91 }  \tabularnewline
		
		{\small{}0.05} & {\small{} 4.938e-01     } & {\small{}0.83 } & {\small{} 4.803e-01     } & {\small{}0.89 } & {\small{}5.130e-01} & {\small{}0.94} \tabularnewline
		
		{\small{}0.025} & {\small{} 2.659e-01   } & {\small{}0.89 } & {\small{} 2.497e-01   } & {\small{}0.94 } & {\small{}2.626e-01} & {\small{} 0.96}  \tabularnewline
		
		{\small{}0.0125} & {\small{}  1.388e-01      } & {\small{}0.93  } & {\small{} 1.276e-01      } & {\small{}0.96 } & {\small{}1.337e-01} & {\small{} 0.97 } \tabularnewline
		\hline
	\end{tabular}
\caption{Example 1, numerical errors and convergence rates in time 
  for the monolithic scheme.} \label{tab:Errors-Convergence rates-3}
\end{table}

\begin{table}[htb!]
\centering{}%
\begin{tabular}{|c|c|c|c|c|c|c|c|c|}
\hline
{$\Delta t$} & \multicolumn{2}{c|}{{$\|e_{\u_{f}}\|_{L^{\infty}(H^1(\Omega_f))}$}}
&
\multicolumn{2}{c|}
{{$\|e_{p_{f}}\|_{L^{2}(L^2(\Omega_f))}$}}
& \multicolumn{2}{c|}{{$\|e_{\u_{p}}\|_{L^{2}(H(div;\Omega_p))}$}}
& \multicolumn{2}{c|}{{$\|e_{p_{p}}\|_{L^{\infty}(L^2(\Omega_p))}$}}
\tabularnewline
		\hline
		{\small{}0.2} & {\small{} 1.240e+00    } & {\small{}Rate} & {\small{} 1.537e+00   } & {\small{}Rate} & {\small1.731e+00    } & {\small{}Rate} & {\small{}  2.862e-01  } & {\small{}Rate}\tabularnewline
		
		{\small{}0.1} & {\small{}  6.491e-01   } & {\small{}0.93 } & {\small{}  7.807e-01   } & {\small{}0.97 } & {\small{}1.006e+00   } & {\small{} 0.78} & {\small{}1.704e-01    } & {\small{}0.74 }\tabularnewline
		
		{\small{}0.05} & {\small{} 3.325e-01    } & {\small{}0.96 } & {\small{} 3.934e-01    } & {\small{}0.98 } & {\small{}  5.603e-01  } & {\small{} 0.84} & {\small{}   9.659e-02  } & {\small{}0.81 } \tabularnewline
		
		{\small{}0.025} & {\small{} 1.676e-01   } & {\small{}0.98 } & {\small{} 1.974e-01    } & {\small{} 0.99 } & {\small{} 2.995e-01   } & {\small{}0.90 } & {\small{} 5.162e-02   } & {\small{}0.90 } \tabularnewline
		
		{\small{}0.0125} & {\small{} 8.365e-02  } & {\small{} 1.00  } & {\small{}  9.866e-02    } & {\small{}1.00 } & {\small{}1.554e-01    } & {\small{} 0.94 } & {\small{} 2.662e-02 } & {\small{} 0.94} \tabularnewline
		\hline
\end{tabular}
%
\centering{}%
\begin{tabular}{|c|c|c|c|c|c|c|c|c|}
\hline
{\small{}$\Delta t$} & \multicolumn{2}{c|}{{\small{}$\|e_{\bbeta_p}\|_{L^{\infty}(H^1(\Omega_p))}$}}
&\multicolumn{2}{c|}
{{\small{}$\|e_{\partial_{t}\bbeta_p}\|_{L^{\infty}(L^2(\Omega_p))}$}}
& \multicolumn{2}{c|}{{\small{}$\|e_{\mu}\|_{L^{\infty}(L^2(\Gamma_{fp}))}$}}&{\small{}\# iter}
\tabularnewline
		\hline
		{\small{}0.2} & {\small{}1.512e+00  } & {\small{}Rate} & {\small{} 1.553e+00   } & {\small{}Rate} & {\small{}  1.828e+00       } & {\small{}Rate} & {\small{}10.00 }\tabularnewline
		
		{\small{}0.1} & {\small{}8.877e-01    } & {\small{}0.78 } & {\small{}   8.933e-01     } & {\small{} 0.79} & {\small{}9.658e-01     } & {\small{}0.92 }  &{\small{}10.00 }\tabularnewline
		
		{\small{}0.05} & {\small{} 4.903e-01     } & {\small{}0.83 } & {\small{} 4.803e-01     } & {\small{}0.89 } & {\small{}5.020e-01       } & {\small{}0.94} &{\small{}10.00 }\tabularnewline
		
		{\small{}0.025} & {\small{} 2.637e-01   } & {\small{}0.89 } & {\small{} 2.497e-01   } & {\small{}0.94 } & {\small{}2.583e-01       } & {\small{} 0.95}  &{\small{} 10.00}\tabularnewline
		
		{\small{}0.0125} & {\small{}  1.373e-01      } & {\small{}0.94  } & {\small{} 1.276e-01      } & {\small{}0.96 } & {\small{}   1.324e-01   } & {\small{} 0.96 }  &{\small{}10.00  }\tabularnewline
		\hline
	\end{tabular}
\caption{Example 1, numerical errors and convergence rates in time 
  for the iterative Robin-Robin algorithm with 10 iterations per time step.}
 \label{tab:maxiter10}
\end{table}

\begin{figure}[ht!]
\begin{center}
\includegraphics[width=0.325\textwidth]{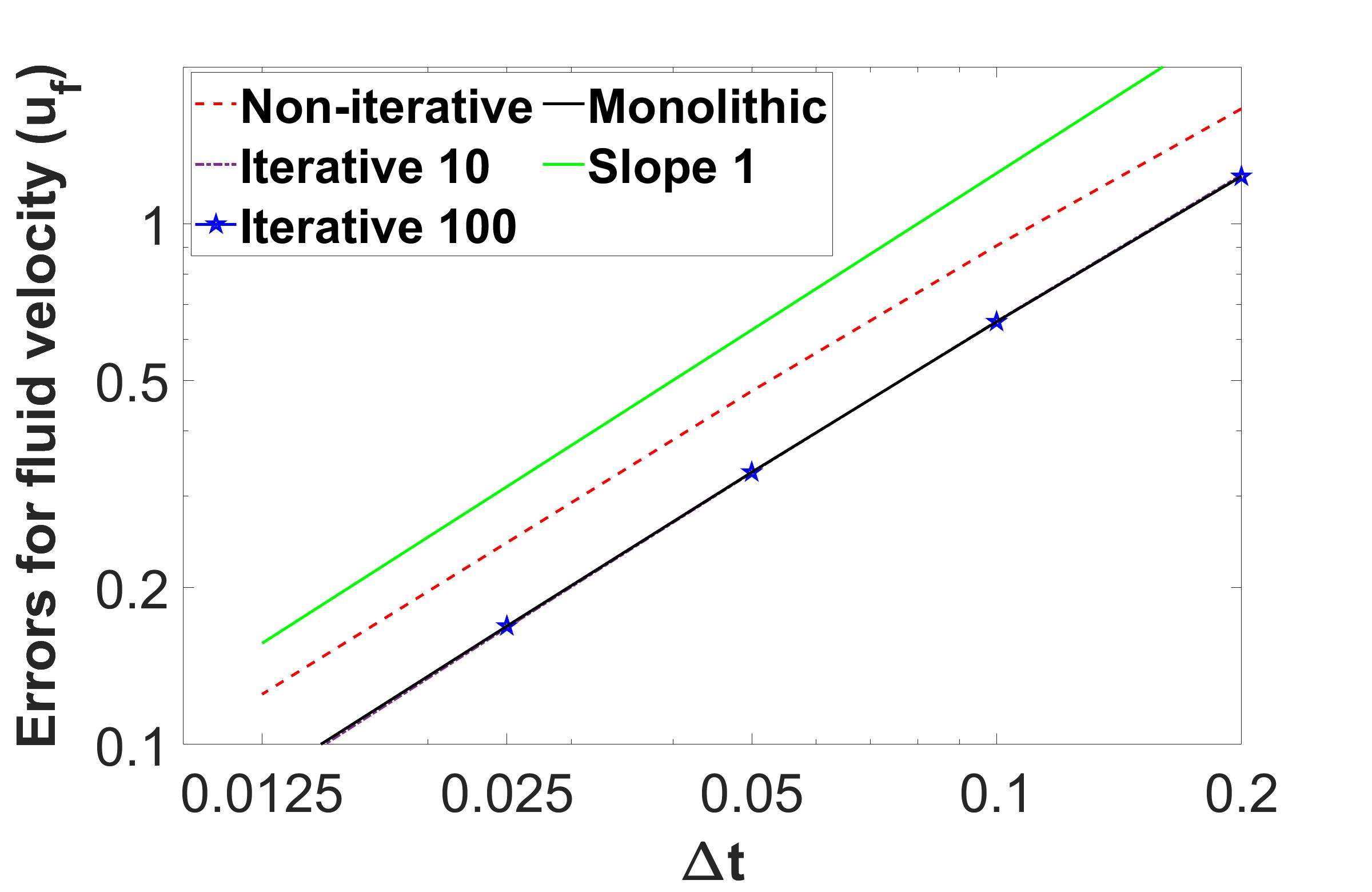}
\includegraphics[width=0.325\textwidth]{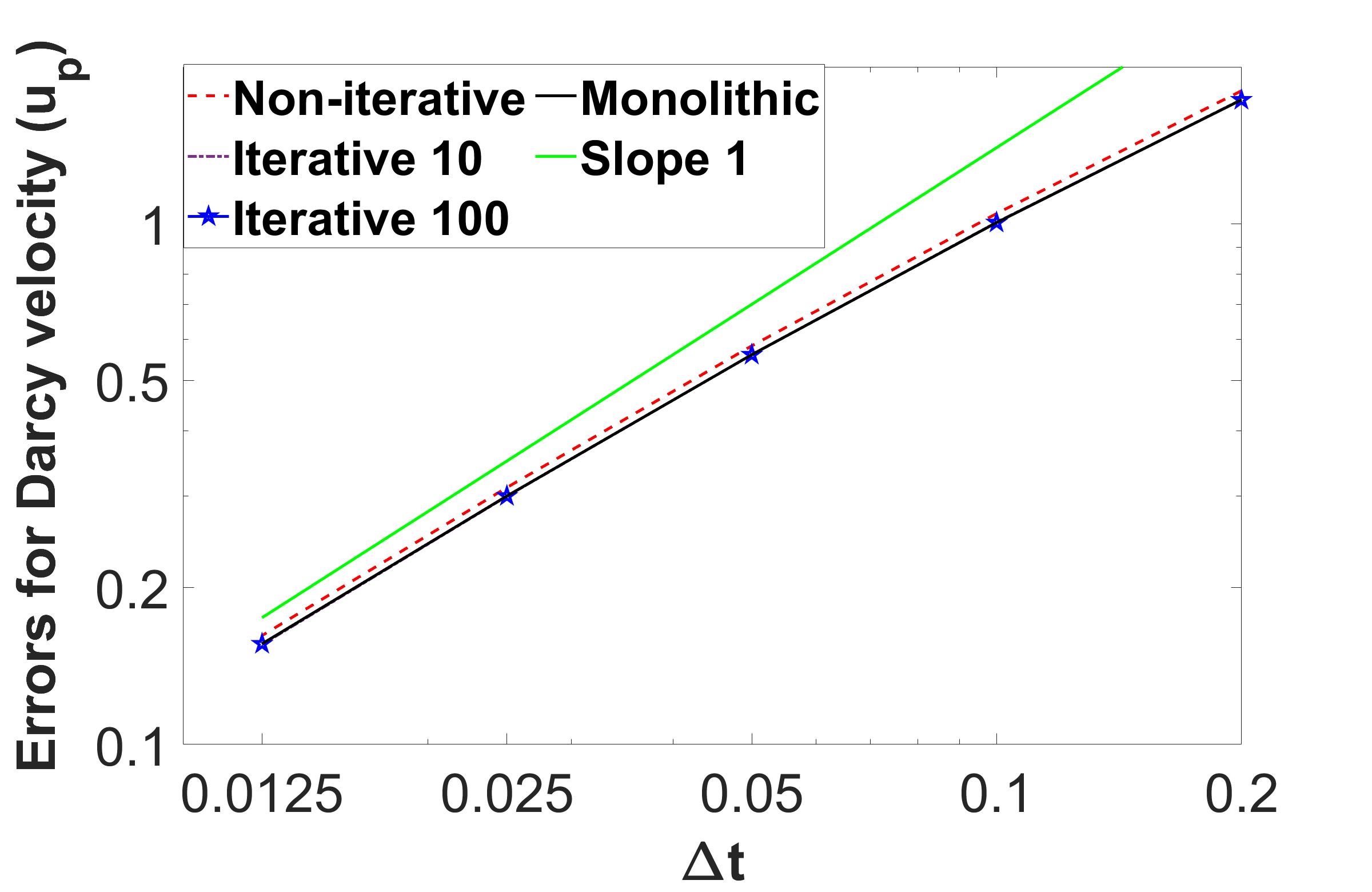}
\includegraphics[width=0.325\textwidth]{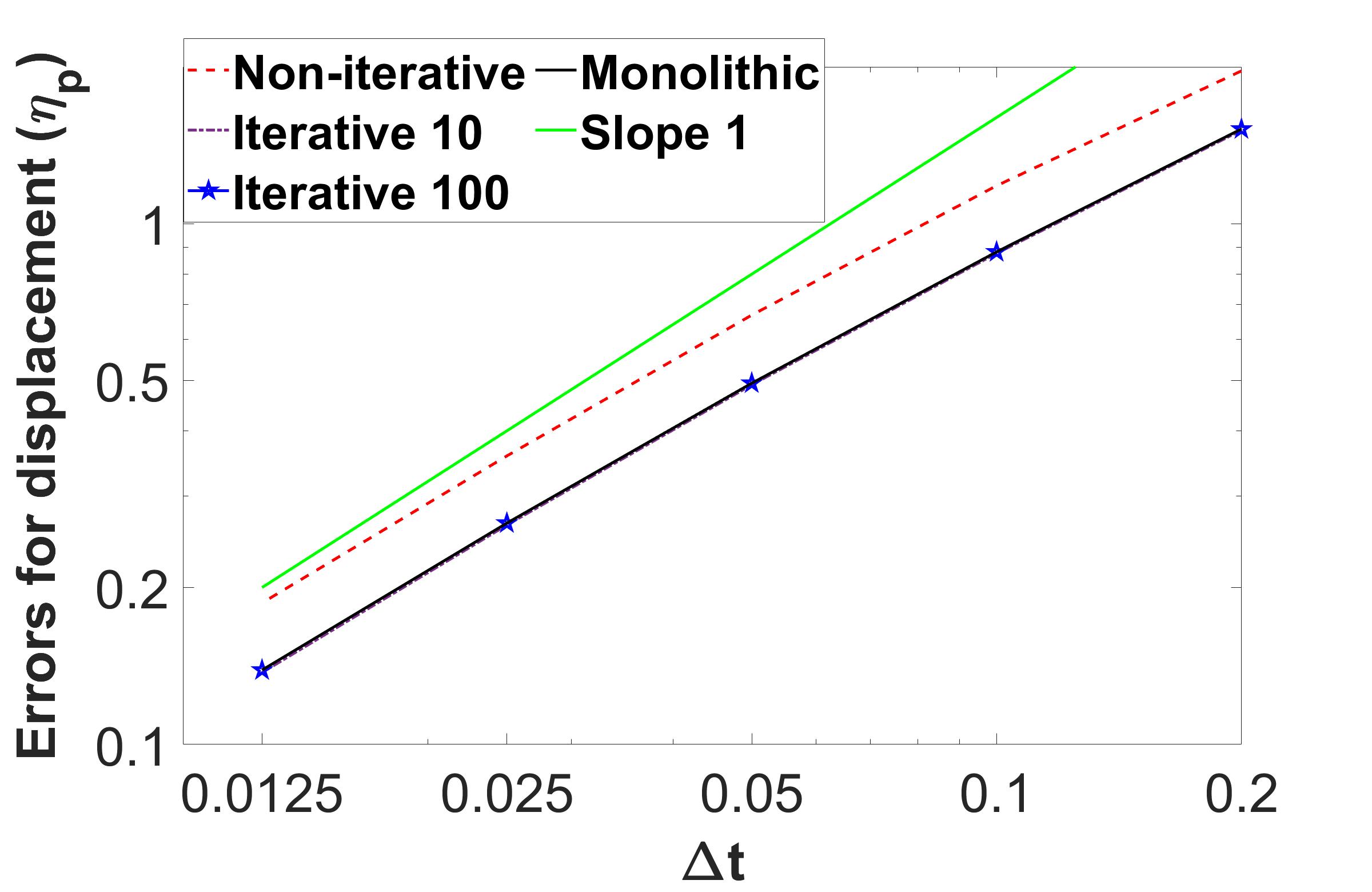}\\
\includegraphics[width=0.325\textwidth]{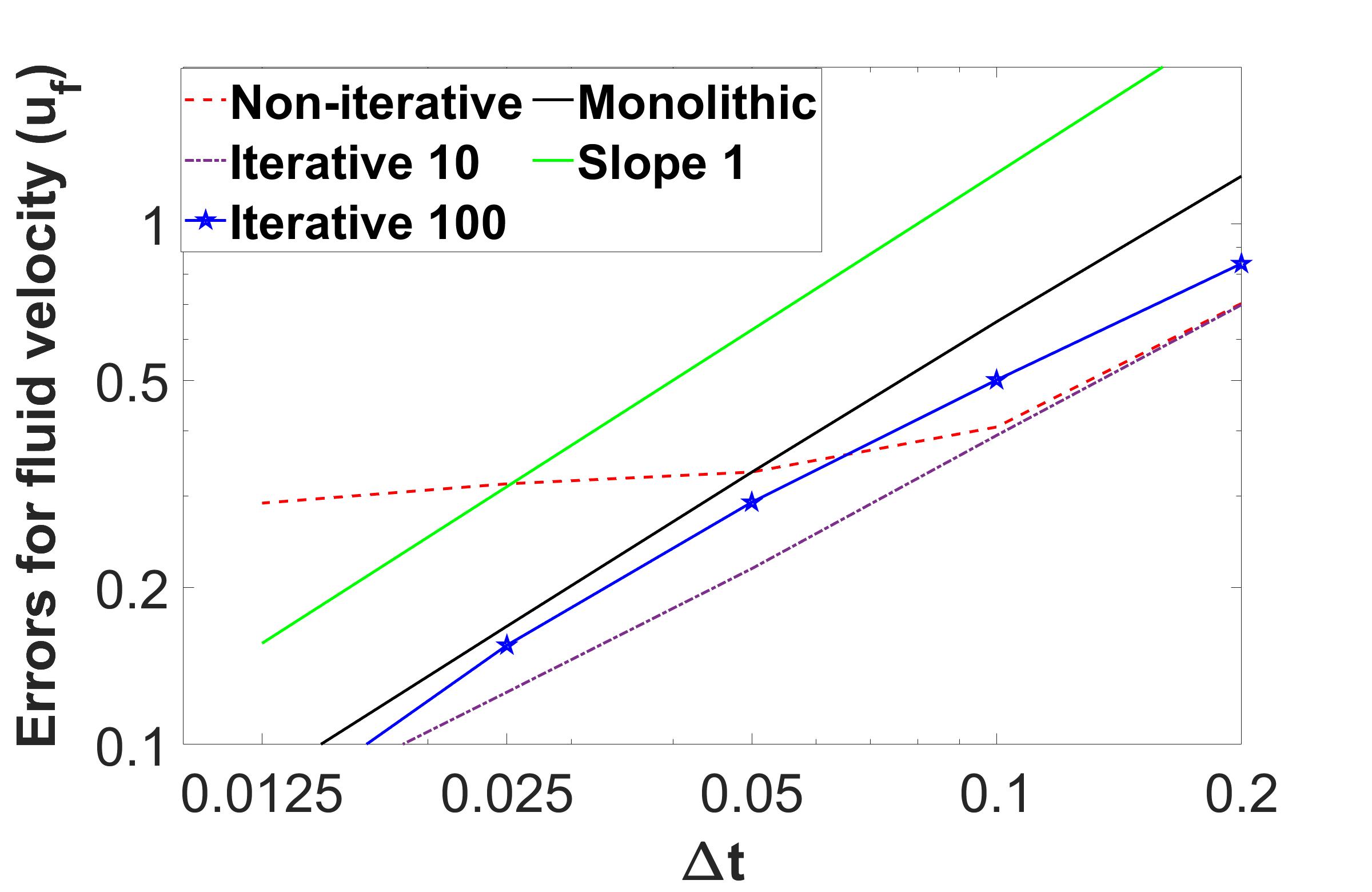}
\includegraphics[width=0.325\textwidth]{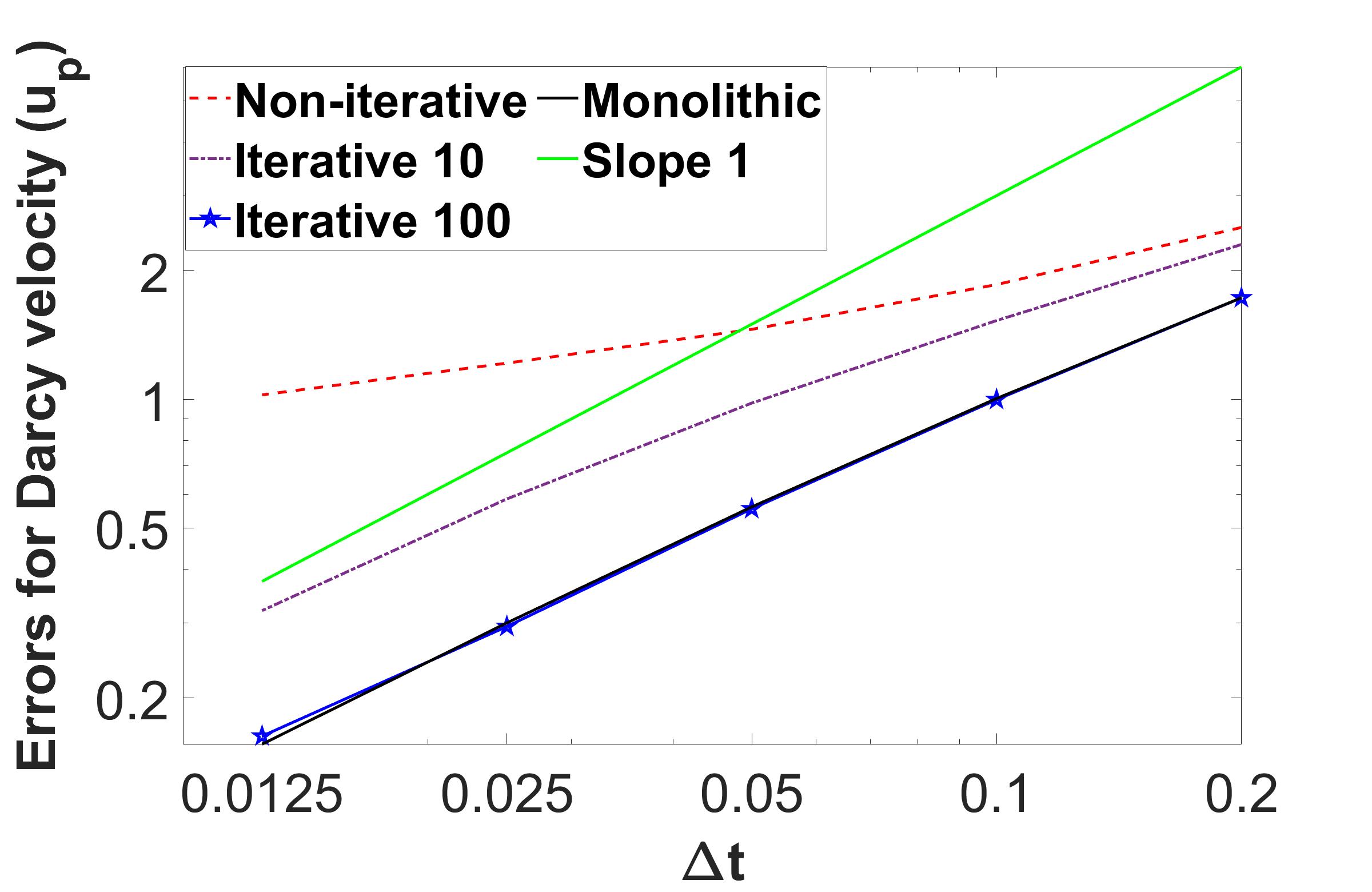}
\includegraphics[width=0.325\textwidth]{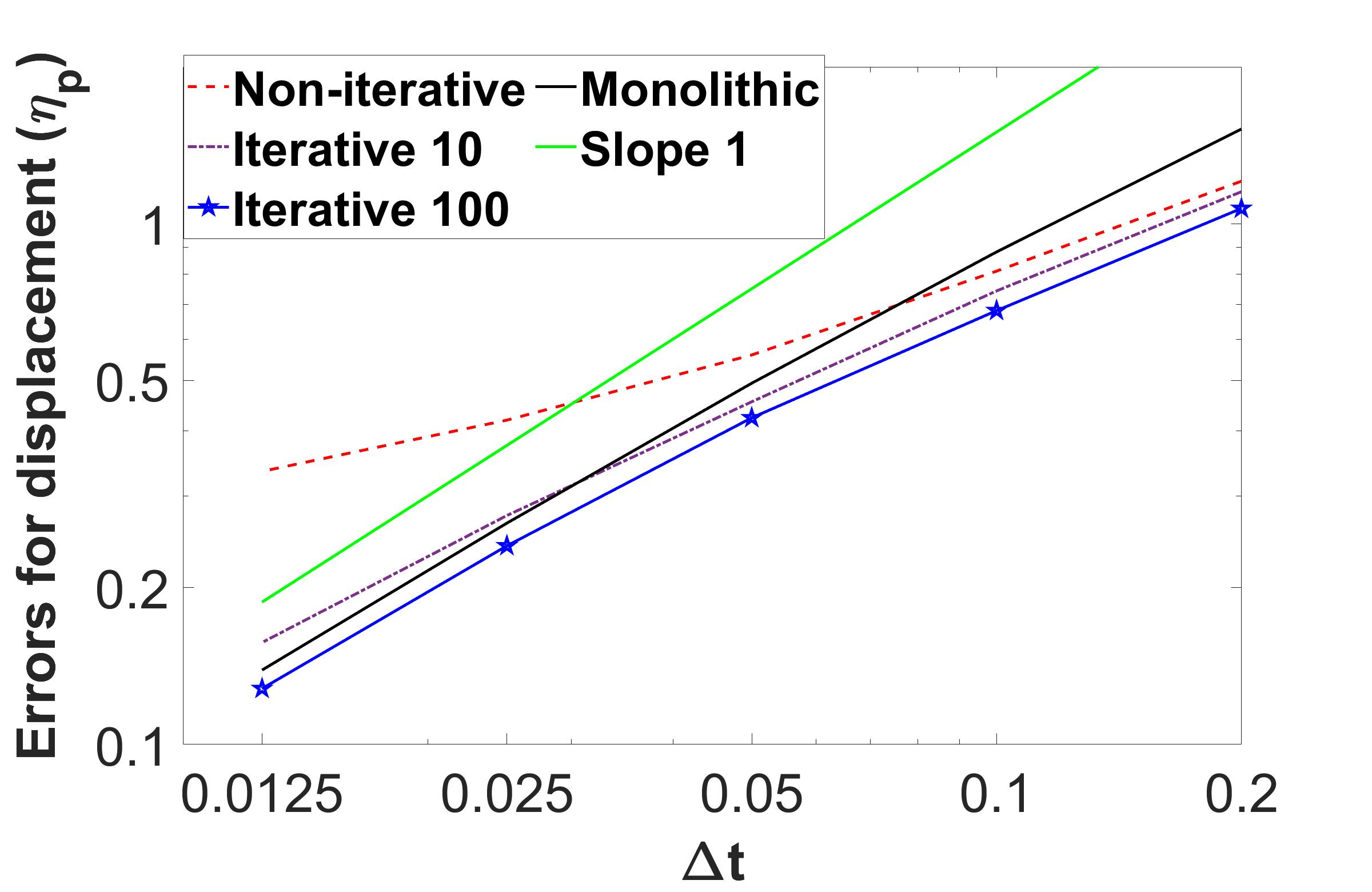}\\
\includegraphics[width=0.325\textwidth]{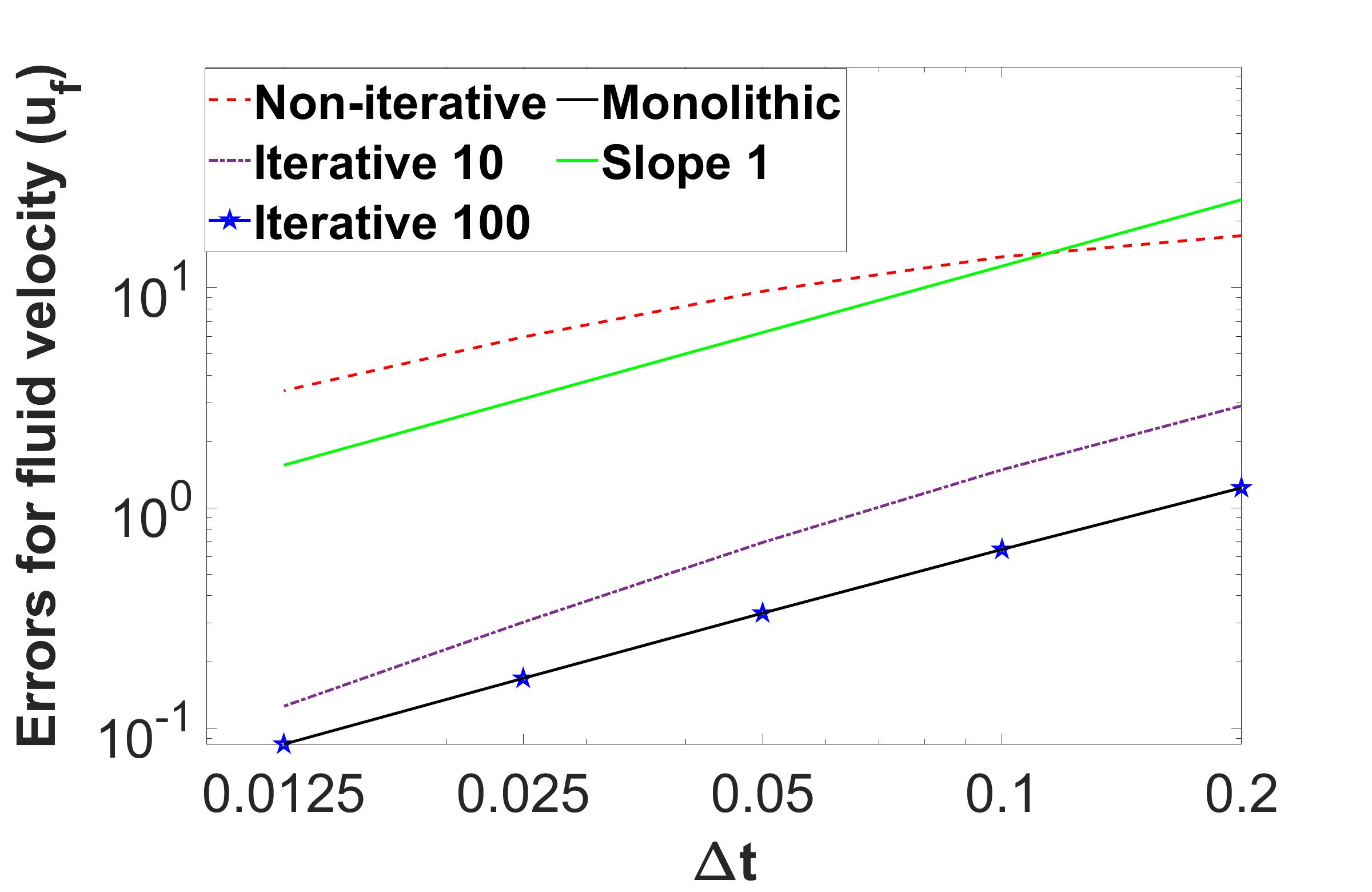}
\includegraphics[width=0.325\textwidth]{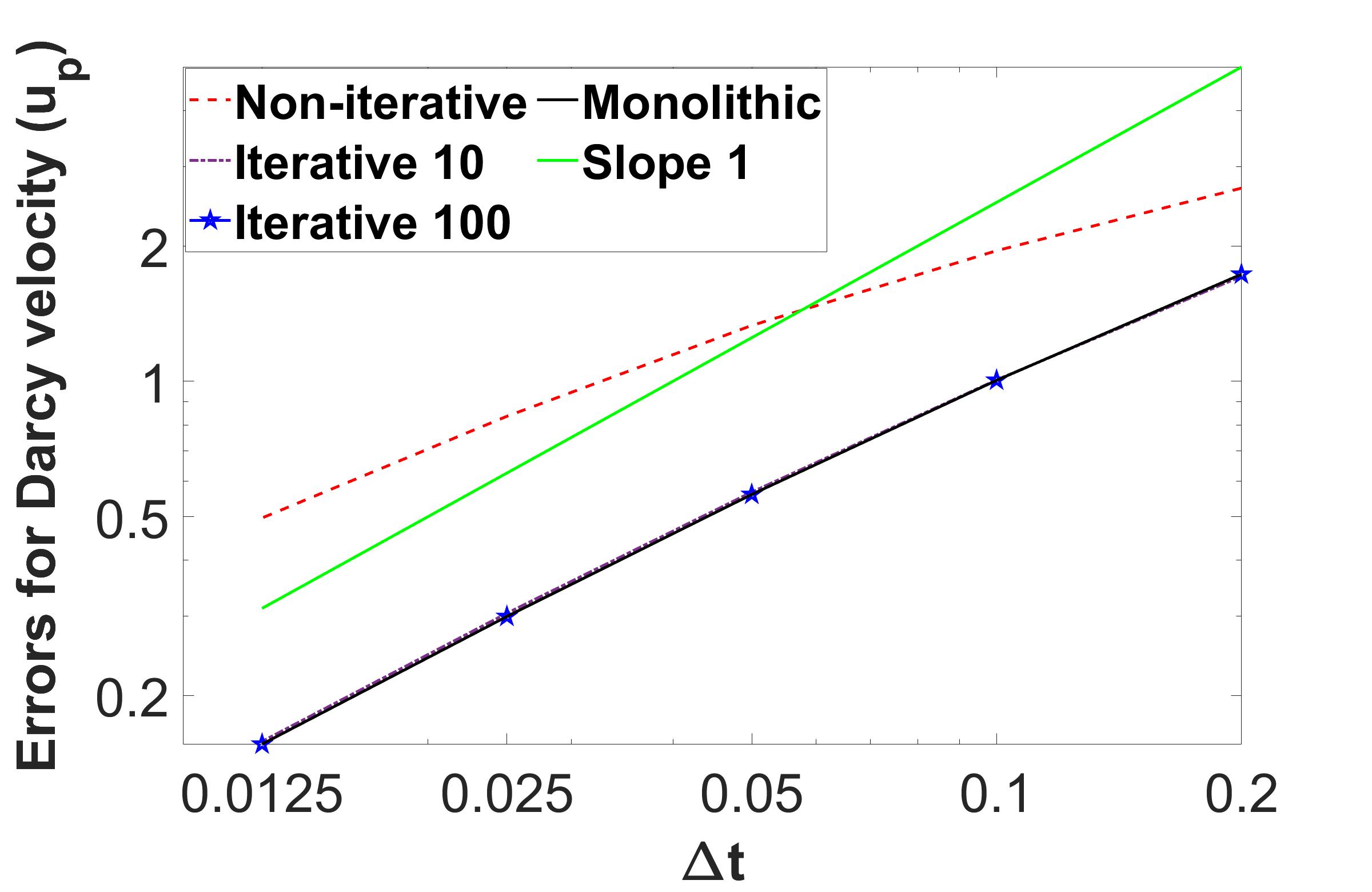}
\includegraphics[width=0.325\textwidth]{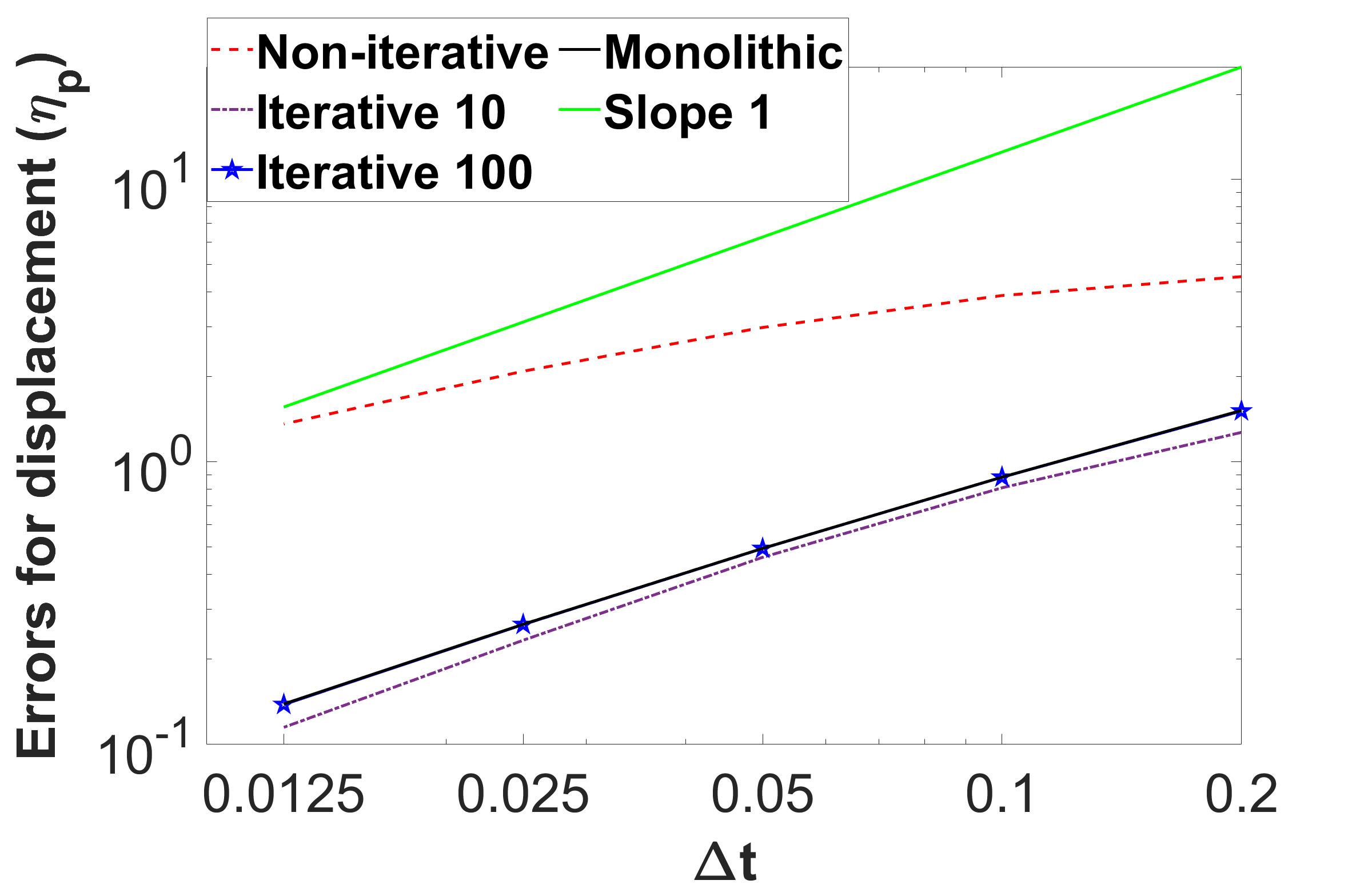}\\
\end{center}
\caption{Example 1, convergence plots for fluid velocity $\u_f$ (left), Darcy velocity $\u_p$ (center), and displacement $\bbeta_p$ (right)
for $\gamma=1$ (first row), $\gamma=0.001$ (second row), and $\gamma=100$ (third row)
  computed by the non-iterative and iterative 
  Robin-Robin methods and the monolithic method.
}\label{fig:conv_plots}
\end{figure}

\subsubsection{Robustness to the Robin parameter $\gamma$}

In this subsection, we study the robustness of the Robin-Robin schemes to the
value of $\gamma$, which is obviously a key parameter. All the parameters are set as previously, with the exception of $\gamma = \gamma_f = \gamma_p$, which will take different values. In practice, it is reasonable to set $\gamma$ so that the magnitudes of the velocity and stress terms in the Robin combination are comparable. While these can be inferred from the physical data and/or preliminary simulation results, some degree of robustness to $\gamma$ is desirable. 

First, we consider the non-iterative Robin-Robin scheme and the effect of $\gamma$ on the numerical errors. Figure~\ref{fig:conv_plots-gamma} shows the convergence plots for $\u_f$, $\u_p$, $\bbeta_p$ for $\gamma = 0.001, 0.01, 0.1, 1, 10, 100$. For all variables, we observe first order convergence for $\gamma = 0.01, 0.1, 1, 10$, with the rates approaching first order as $\Delta t \to 0$ for $\gamma = 100$. However, the convergence rate deteriorates for $\gamma = 0.001$. The different variables exhibit somewhat different sensitivity to $\gamma$, with $\u_f$ being slightly more sensitive than $\u_p$ and $\bbeta_p$. The general trend is that the errors are similar for $\gamma \in [0.01,10]$ and the errors increase if $\gamma$ is too small or too large. We note that for the range $\gamma \in [0.01,10]$ the magnitudes of the velocity and stress terms in the Robin combination are comparable. The increase in errors for extreme values of $\gamma$ can be explained from the theoretical estimate, which has terms proportional to both $\gamma^2$, cf. \eqref{H4Bound} and $1/\gamma^2$, cf. \eqref{mixed-2}. We conjecture that the reason for the reduced convergence rate with $\gamma = 0.001$ is that velocity continuity is not properly enforced by the Robin transmission conditions. We finally note that both the converged iterative scheme and the iterative scheme with 10 iterations give smaller errors and recover first order convergence in time for the extreme values $\gamma = 0.001, 100$, see the second and third rows in Figure~\ref{fig:conv_plots}.
Moreover, in Figure~\ref{fig:maxiter10} we present the convergence plots for $\u_f$, $\u_p$, $\bbeta_p$ with $\gamma = 0.001, 0.01, 0.1, 1, 10, 100$ for the iterative method with 10 iterations and note that the sensitivity to $\gamma$ is significantly reduced compared to the non-iterative method.

\begin{figure}[ht!]
\begin{center}
\includegraphics[width=0.325\textwidth]{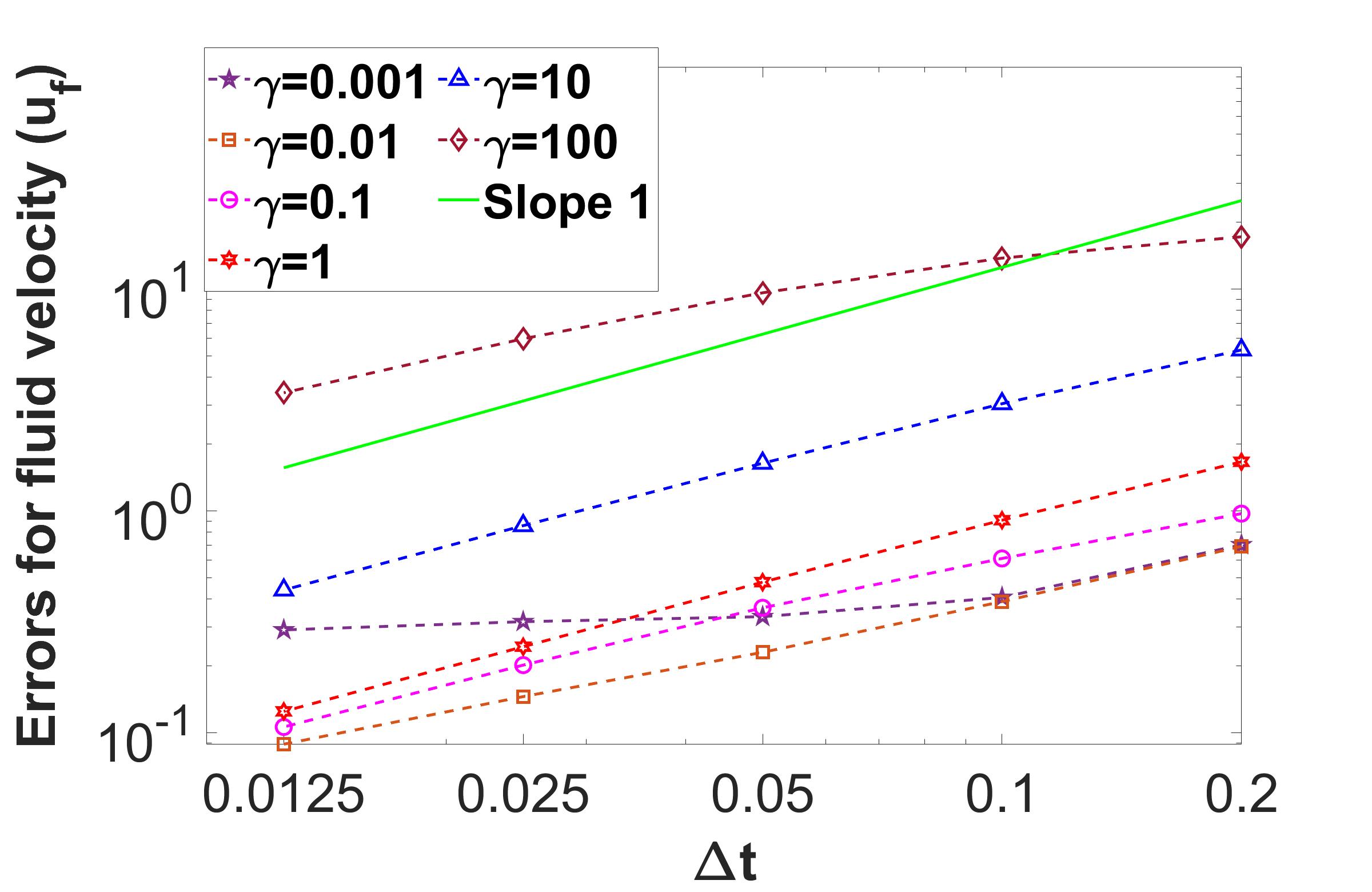}
\includegraphics[width=0.325\textwidth]{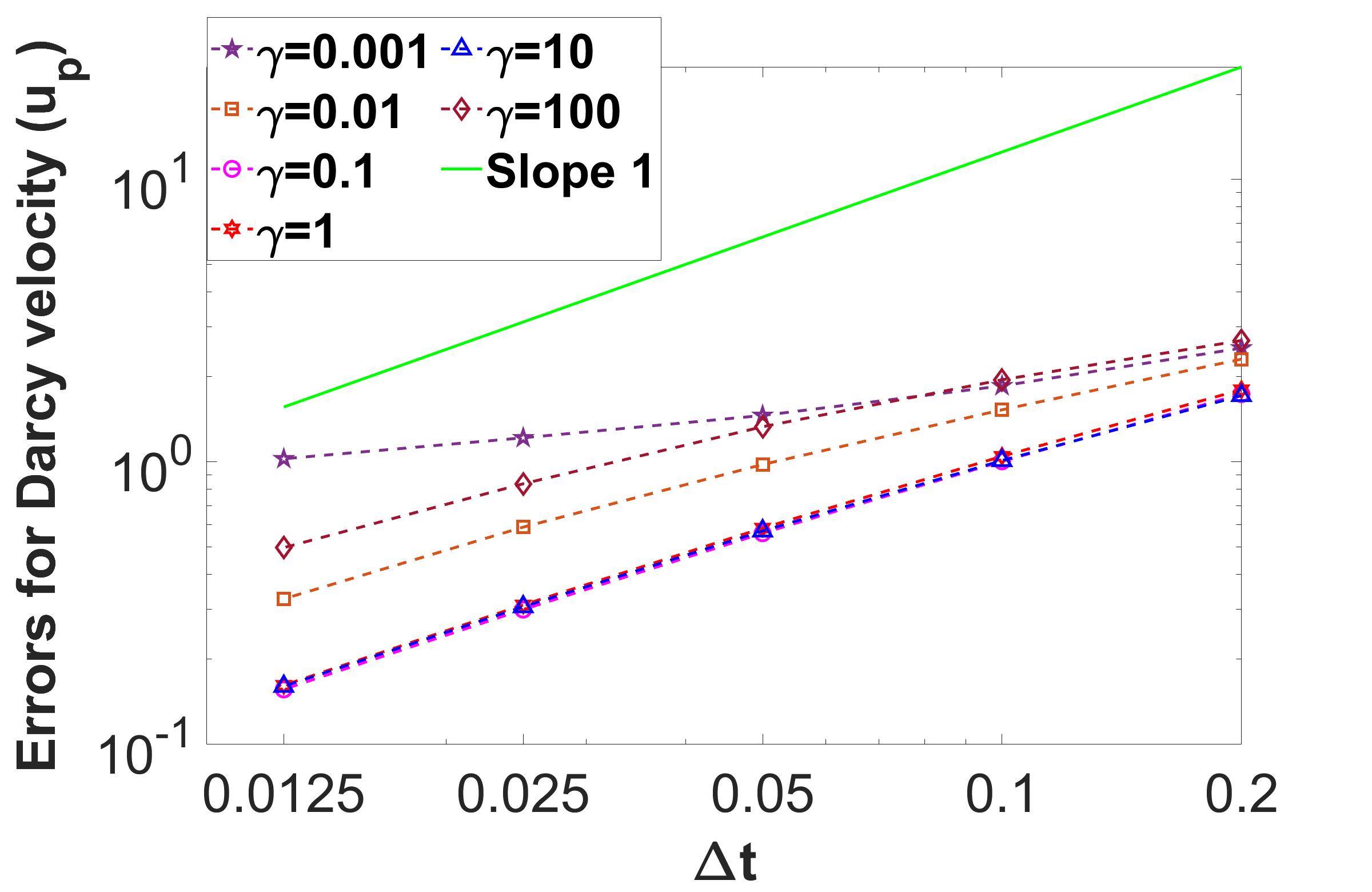}
\includegraphics[width=0.325\textwidth]{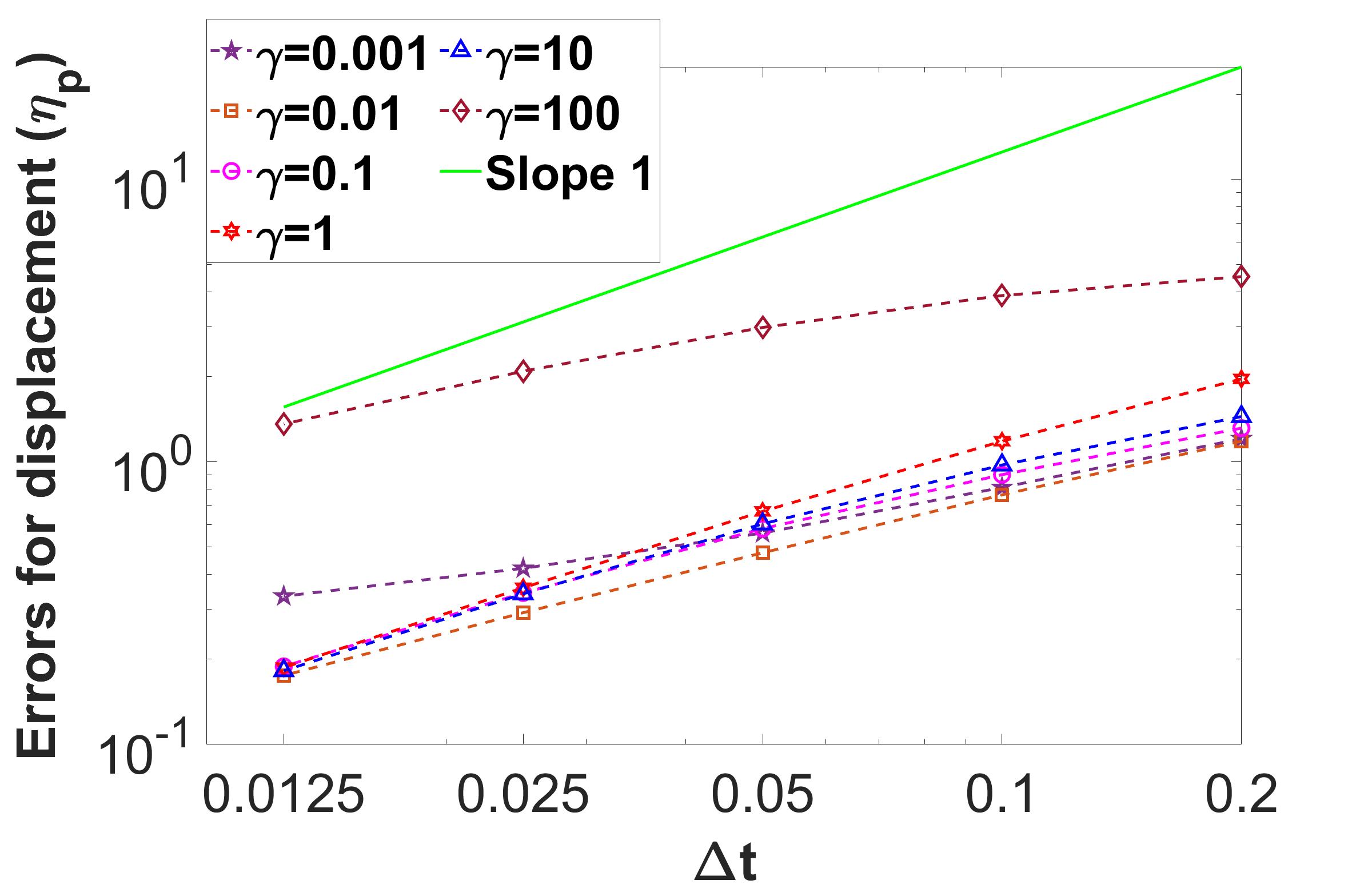}
\end{center}
\caption{Example 1, convergence plots for fluid velocity $\u_f$ (left), Darcy velocity $\u_p$ (center) and displacement $\bbeta_p$ (right) for the non-iterative Robin-Robin algorithm for different values of $\gamma$.}\label{fig:conv_plots-gamma}
\end{figure}

\begin{figure}[ht!]
\begin{center}
\includegraphics[width=0.325\textwidth]{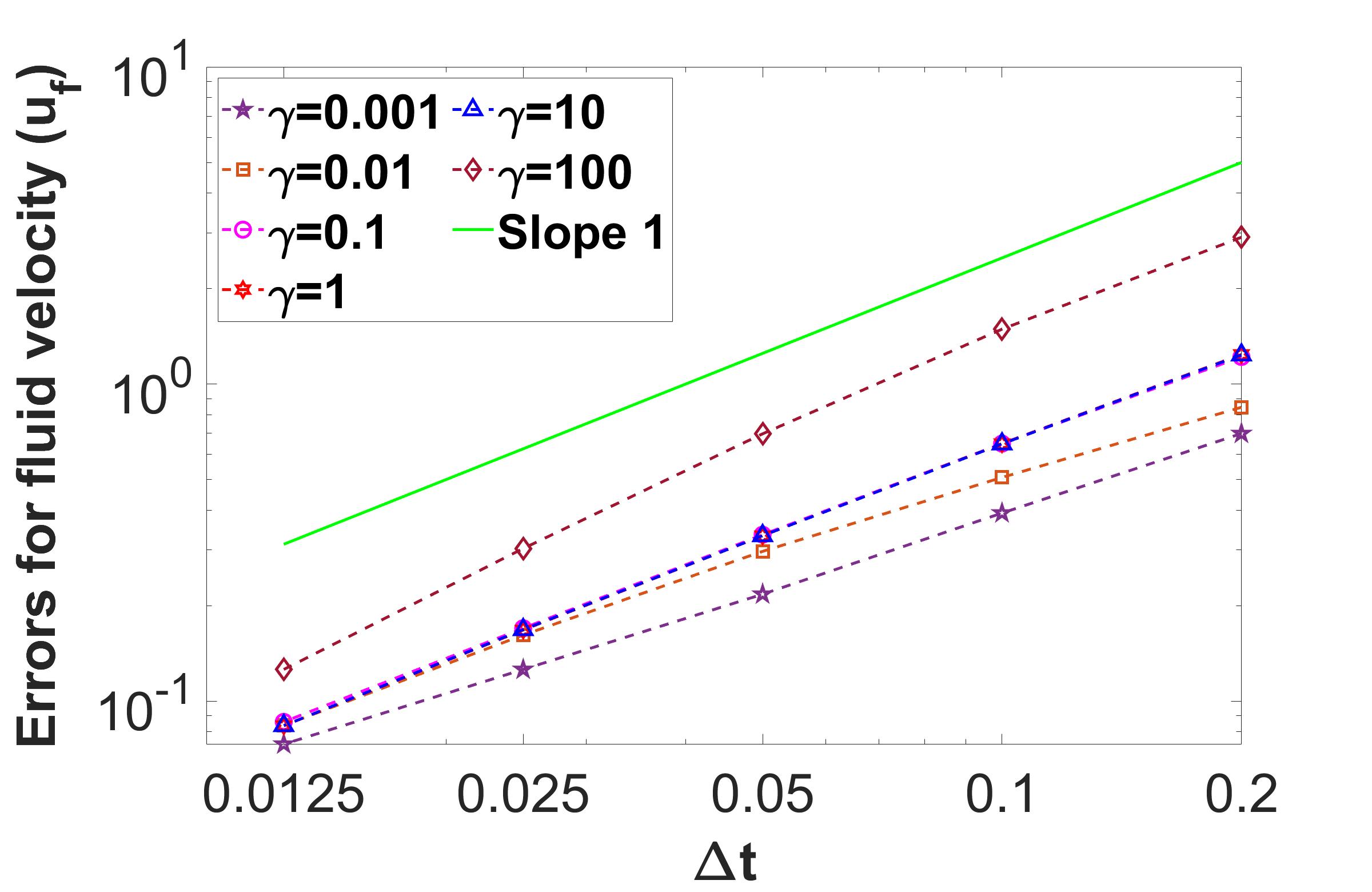}
\includegraphics[width=0.325\textwidth]{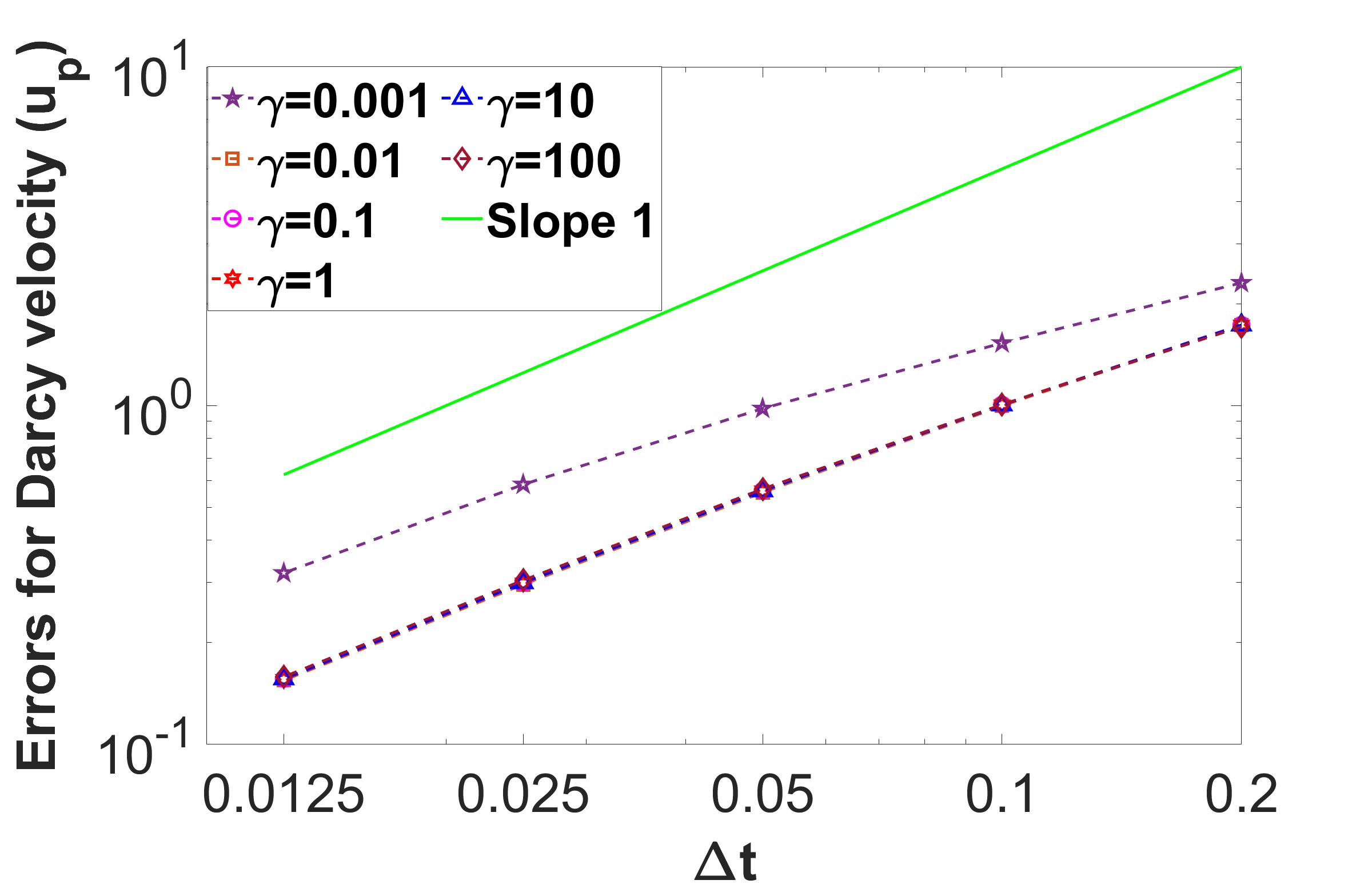}
\includegraphics[width=0.325\textwidth]{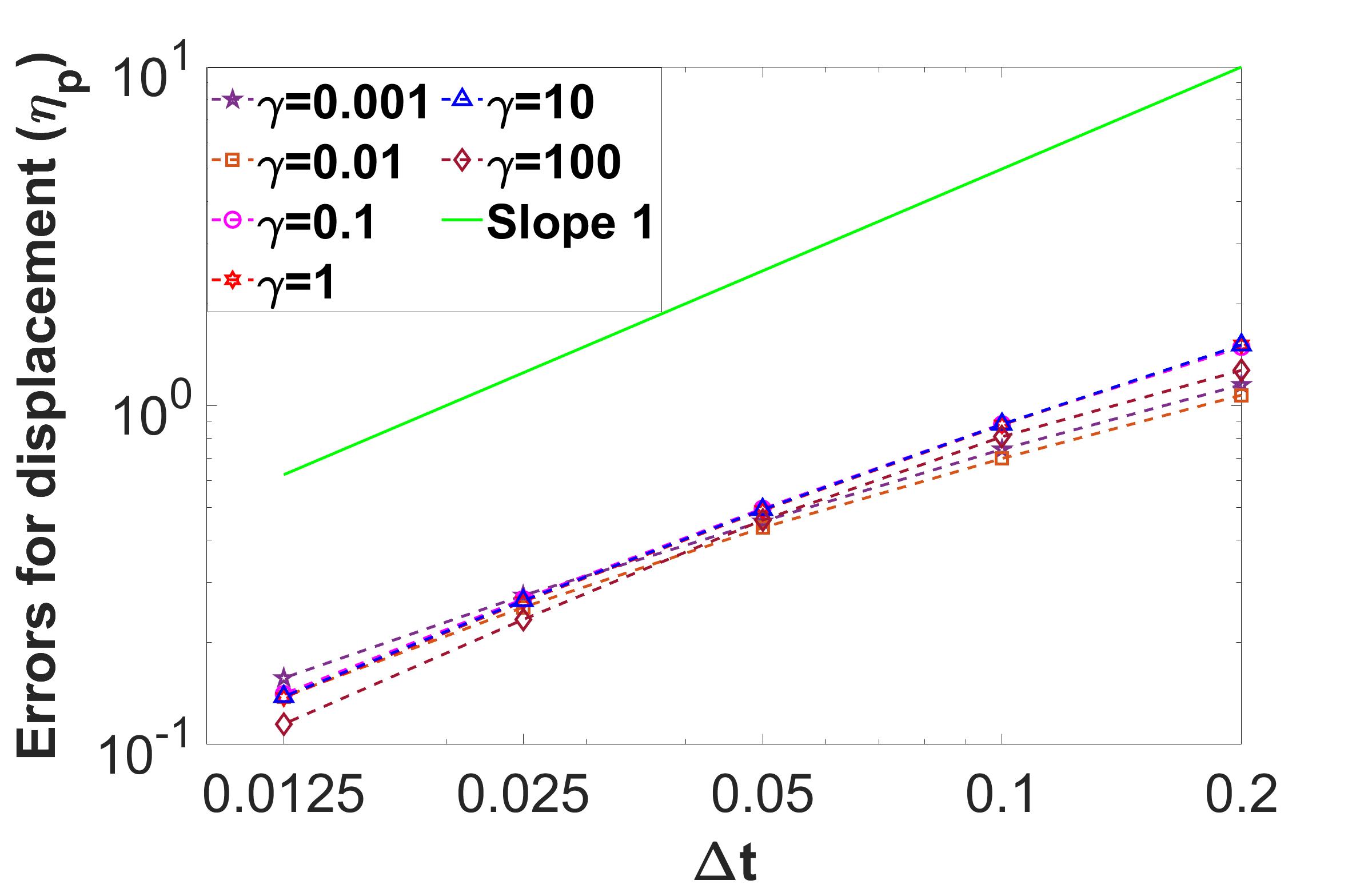}
\end{center}
\caption{Example 1, convergence plots for fluid velocity $\u_f$ (left),  Darcy velocity $\u_p$ (center) and displacement $\bbeta_p$ (right) for the iterative Robin-Robin algorithm with 10 iterations per time step for different values of $\gamma$.}\label{fig:maxiter10}
\end{figure}

Next, we consider the iterative scheme and the effect of $\gamma$ on the number of iterations.
We take the same values of $\gamma$ used above, i.e., $\gamma = 0.001, 0.01,  0.1, 1, 10, 100$.
Table~\ref{tab:iters} lists the average number of iterations needed for convergence for each $\gamma$ value. We observe that, for a given value of $\gamma$, the average number of iterations 
required to satisfy the stopping criterion \eqref{eq:stopping} per time step decreases as the time step size is reduced. On the other hand, for a given time step the average
number of iterations varies considerably with $\gamma$, with $\gamma = 0.1$ being the 
``optimal'' value. This value is in the range $[0.01,10]$ of values for $\gamma$ that resulted in smaller time discretization errors for the non-iterative scheme. We conclude that values of $\gamma$ in the ``optimal'' range, where the magnitudes of the velocity and stress terms in the Robin combination are comparable, result in both smaller errors in the non-iterative scheme and faster convergence in the iterative scheme. 

\begin{table}
\centering{}%
\begin{tabular}{ |c| c c c c c c|}
\hline
$\Delta t$& $\gamma=0.001$ & $\gamma=0.01$ & $\gamma=0.1$ & $\gamma=1$ & $\gamma=10$ & $\gamma=100$\\ 
\hline
0.2 & 100.00 &94.40&26.80 & 96.60 &49.00&99.80 \\  
0.1& 100.00&86.70&20.20 &89.20 &43.70 &93.90  \\
0.05& 100.00&54.20&17.05& 76.50  & 39.20 & 67.65\\
0.025 &76.20&23.675&12.60& 65.45 & 34.97 & 46.525\\
0.0125 &31.45&13.3375&8.72 &55.10 & 30.96 &32.325\\
 \hline
\end{tabular}
\caption{Example 1, average number of iterations 
required to satisfy the stopping criterion \eqref{eq:stopping} per time step of the iterative Robin-Robin scheme for different values of $\gamma$.}\label{tab:iters}
\end{table}

\subsection{Example 2: blood flow test}

In this example, we test the behavior of the method for a computationally challenging choice of physical parameters. We consider a benchmark on modeling blood flow through a section of an idealized artery. The Stokes equations model the blood flow in the lumen of the artery
and the Biot equations model the arterial wall. Let $R$ and $L$ be the 
radius and length of the artery, respectively.
The fluid domain is $\Omega_f = (0, L)\times
(-R, R)$. Its top and bottom boundaries are in contact
with the poroelastic arterial wall of thickness $r_p$.
See the computational domain in Figure~\ref{fig:bloodflow-domain_sketch} (left).

\begin{figure}[htb!]
\begin{minipage}{.65\textwidth}
\centering
\vspace{0.3cm}
\begin{overpic}[width=.80\textwidth, grid=false]{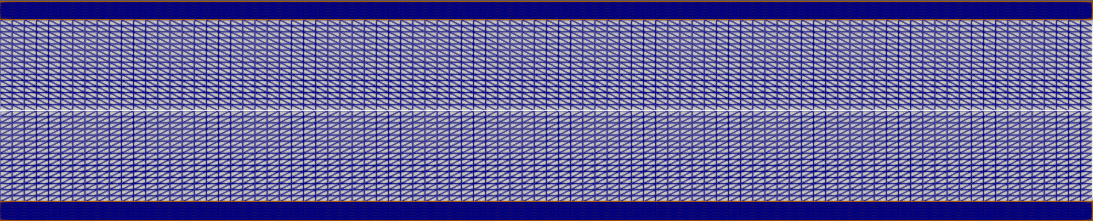}
\put(-7,8){$\Gamma_f^{in}$}
\put(101,8){$\Gamma_f^{out}$}
\put(46,-5){$\Gamma_p^{ext} $}
\put(46,23){$\Gamma_p^{ext} $}
\put(-3,22){\vector(1,-1){3}}
\put(-8,23){$\Gamma_p^{in}$}
\put(-3,-2){\vector(1,1){3}}
\put(-8,-6){$\Gamma_p^{in}$}
\put(103,-2){\vector(-1,1){3}}
\put(103,-6){$\Gamma_p^{out}$}
\put(103,22){\vector(-1,-1){3}}
\put(103,23){$\Gamma_p^{out}$}
\put(20,5){\textcolor{white}{$\Omega_f$}}
\put(75,13){\textcolor{white}{\vector(1,1){5}}}
\put(75,7){\textcolor{white}{\vector(1,-1){5}}}
\put(70,9){\textcolor{white}{$\Omega_p$}}
\end{overpic}
\end{minipage}
\hfill
\begin{minipage}{.27\textwidth}
\begin{overpic}[width=\textwidth, grid=false]{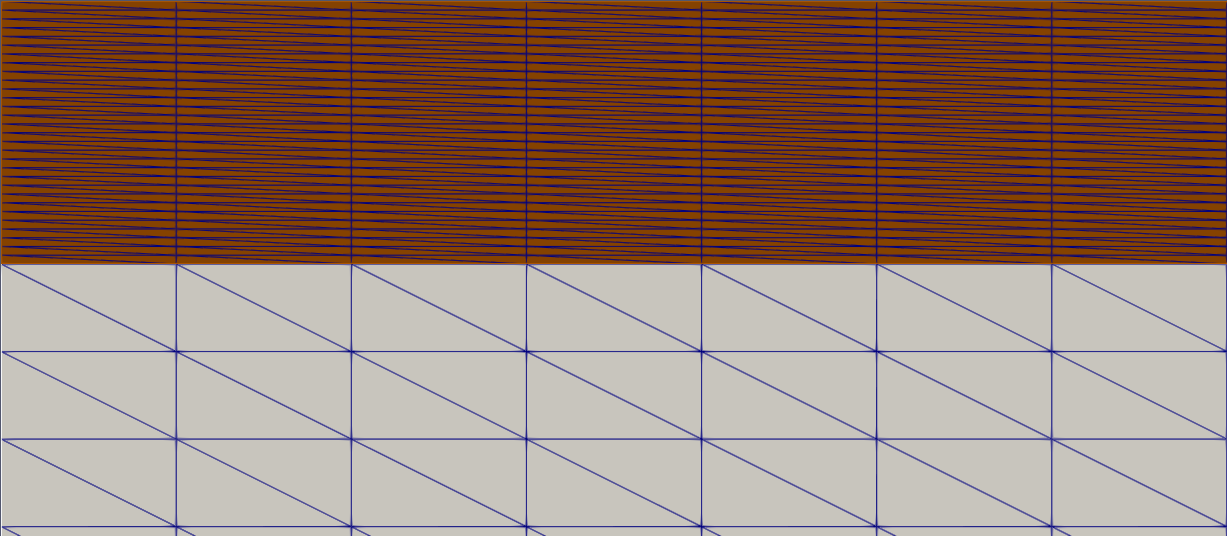}
\end{overpic}
\end{minipage}
\vspace{0.3cm}
\caption{Example 2, left: computational domain and mesh; right: zoomed-in view of the mesh at the fluid-structure interface.}
\label{fig:bloodflow-domain_sketch}
\end{figure}

Since this is a 2D problem representing a slice of a 3D problem, 
we add an extra term to \eqref{eq:biot0} to account for
the fact that the 2D structure is actually part of a 3D
cylindrical tube:
\begin{equation*}
\rho_p \, \partial_{tt} \bbeta_p - \div \bs_p + \beta \, \bbeta_p = \f_p \quad \text{in} \quad \O_p\times (0,T],
\end{equation*}
where $\rho_p$ is the fluid density in the poroelastic region. 
The additional term (i.e., the last term at the left-hand side) 
comes from the axially symmetric two dimensional formulation, 
accounting for the recoil due to the circumferential strain \cite{Badia-Robin-FSI}. 
The body force terms $\f_f$ and $\f_p$ and external source $q_p$ are set to zero.  

Let $\Gamma_f^{in}= \{(0,y) |-R < y < R \}$ and
$\Gamma_f^{out} = \{(L,y) |-R<y<R\}$ be the 
inlet and outlet boundaries of the fluid domain, respectively. 
Following \cite{wy2022,fpsi-augmented,pb2024},
we prescribe the normal stress at both inlet and outlet:
\begin{equation}\label{bdyeqn1}
 \bs_f \, \n_f^{in} = -p_{in}(t) \, \n_f, \quad \text{ on }\quad\Gamma_f^{in}\times(0,T]; \quad  \bs_f \, \n_f^{out} = \mathbf{0}, \quad \text{ on }\quad\Gamma_f^{out}\times(0,T];
 \end{equation}
where $\n_f^{in}$ and $\n_f^{out}$ are the respective outward unit normals and 
\begin{equation}\label{pressurefunc}
p_{in}(t) = \left\{ \begin{array}{ll}
\ds \frac{P_{\max}}{2}\left( 1-\cos \big(\frac{2\pi t}{T_{\max}} \big) \right) \,, & \text{if} \,\ t\leq T_{\max}; \\[2ex]
0 \,, & \text{if}\,\  t>T_{\max},
\end{array} \right.
\end{equation}
with $P_{\max}=13,334$ dyn/cm$^2$ and $T_{\max}=0.003$ s. 
flow distribution and pressure field are often unknown, they are common in blood flow models. 
In a system in rest state, this inlet boundary condition generates a pressure pulse that travels
through the fluid and poroelastic structure domains. 
We set the end of the time interval of interest to $T = 0.021$ s
to avoid the pressure pulse reaching the outlet boundary.

With similar notation, we denote with $\Gamma_p^{in} = \{(0,y)| -R -r_p<
y < R \quad\text{or}\quad R<y <R+r_p\}$ and $\Gamma_p^{out} = \{(L,y)| -R- r_p <y <R \quad\text{or}\quad R<y <R+r_p\}$ the inlet and outlet boundaries of the poroelastic structure, respectively. We assume that the poroelastic structure is fixed at the inlet and
outlet boundaries:
  \begin{equation}\label{bdyeqn2}
  \bbeta_p= \mathbf{0},  \quad \text{ on }\quad\Gamma_p^{in}\cup\Gamma_p^{out}\times(0,T],
  \end{equation}
and for the Darcy velocity we impose the following drained boundary condition:
\begin{equation}\label{bdyeqn5}
  \u_p= \mathbf{0},  \quad \text{ on }\quad\Gamma_p^{in}\cup\Gamma_p^{out}\times(0,T].
\end{equation}
Let $\Gamma_p^{ext} = \{(x,y)| 0<x<L,y=-R -r_p \quad\text{or}\quad y =R+r_p\}$ be the external structure boundary. Following \cite{wy2022}, therein we impose:
  \begin{align}
&(\bs_e \n_p)\cdot\n_p=0,   &&\text{ on }\quad\Gamma_p^{ext}\times(0,T], \cl
& \bbeta_p\cdot \btau_{p}=0,  && \text{ on }\quad\Gamma_p^{ext}\times(0,T], \cl
&p_p=0,  && \text{ on }\quad\Gamma_p^{ext}\times(0,T].
  \end{align}

To save computational time, we halve the domain in 
Figure~\ref{fig:bloodflow-domain_sketch} (left) along the horizontal
symmetry axis, denoted with $\Gamma^{sym}_f$, and impose
the following symmetry conditions therein:
 \begin{align}
&\u_f \cdot \n_f=0,   &&\text{ on }\quad\Gamma^{sym}_f\times(0,T], \cl
& (\bs_f\n_f)\cdot \btau_f=0,  && \text{ on }\quad\Gamma^{sym}_f\times(0,T].
  \end{align}
  
The geometric and physical parameters for this test are summarized
Table \ref{tab:Parameters}. The physical parameters are chosen within the range of physical values for arterial blood flow.

\begin{remark}
The physical parameters in Table \ref{tab:Parameters} present several computational challenges. The closeness of the fluid density $\rho_f$ and poroelastic wall density $\rho_p$ may lead to the so-called added-mass-effect, which causes stability and convergence issues for classical Neumann-Dirichlet methods \cite{Causin-added-mass-effect,bqq2009}. The small values of permeability $K$ and storativity $s_0$ may lead to poroelastic locking. The high stiffness of the arterial wall due to large Lam\'e parameters $\lambda_p$ and $\mu_p$ results in large stress along the interface and affects the choice of the Robin parameter $\gamma$. 
\end{remark}

The computational mesh is shown in 
Figure~\ref{fig:bloodflow-domain_sketch} (left), with a zoomed-in view
around the fluid-structure interface in Figure~\ref{fig:bloodflow-domain_sketch} (right).
The time step is set to $\Delta t=10^{-4}$~s. Regarding the choice of the Robin parameter $\gamma$, the $O(10^6)$ values of $\lambda_p$ and $\beta$ indicate that the stress is several orders of magnitude larger than the velocity, suggesting a large value of $\gamma$. With estimated magnitudes $O(1)$ for the interface velocity and $O(10^3)$ for the interface stress, obtained from a preliminary computation, we set $\gamma_f = \gamma_p = \gamma = 1000$ in order to balance the two terms in the Robin combination. We later test the robustness of the non-iterative method for different values of $\gamma$.

\begin{table}
\centering{}%
\begin{tabular}{ c c c c}
\hline
Parameter& Symbol & Units & Reference value\\ 
\hline
Radius & $R$ & cm & 0.5 \\  
Length & $L$ &cm & 6   \\
Poroelastic wall thickness &$r_p$& cm  & 0.1\\
Poroelastic wall density &$\rho_p$& $\text{g/cm}^3$ & 1.1\\
Fluid density& $\rho_f$ & $\text{g/cm}^3$ & 1.0\\
Dyn. viscosity&$\mu_f$  & \text{g/cm-s}&0.035\\
Spring coeff.&$\beta$& $\text{dyn/cm}^4$ & $4 \times 10^{6} $\\
Mass storativity&$s_0 $& $\text{cm}^2$/\text{dyn}&$10^{-3}$\\
Permeability&K& $\text{cm}^2$ & $diag(1,1)\times10^{-6}$\\
Lam\'{e} coeff. &$\mu_p$&$\text{dyn/cm}^2$ &$5.575 \times 10^5$ \\
Lam\'{e} coeff. &$\lambda_p$&$\text{dyn/cm}^2$ &$1.7 \times 10^6$\\
 BJS coeff.&$\alpha_{BJS}$ & &1\\
 Biot-Willis constant& $\alpha$ & & 1 \\
 \hline
\end{tabular}
\caption{Example 2, geometric and physical parameters.}\label{tab:Parameters}
\end{table}

Figure \ref{paraview:computed-pressure} shows the fluid pressure $p_f$ and Darcy pressure $p_p$ in their corresponding domains computed by the Robin-Robin scheme in the non-iterative version and the monolithic scheme
at three different times. We clearly see the propagation of the pressure wave and an excellent qualitative match in the pressures computed by the two methods. 
Figure \ref{paraview:computed-velocity} shows the velocity fields $\u_f$ and $\u_p$ computed by the same two schemes at the same times used in Figure \ref{paraview:computed-pressure}.
Again, we see a great qualitative match in the solutions computed 
by the two methods.
The faint lines that can been seen along the horizontal symmetry line in 
the plots in Figure \ref{paraview:computed-pressure} and
\ref{paraview:computed-velocity} are due to the fact that 
we have solved the problem on half of the domain
and mirrored the results in Paraview to show the entire
vessel.

\begin{figure}[htb!]
\centering
\vskip .3cm
\begin{overpic}[width=0.325\textwidth,grid=false]{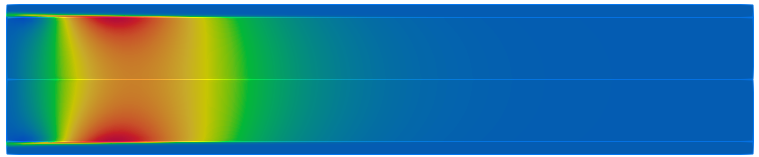}
\put(32,22){\textbf{t=0.007}}
\end{overpic}
\begin{overpic}[width=0.325\textwidth,grid=false]{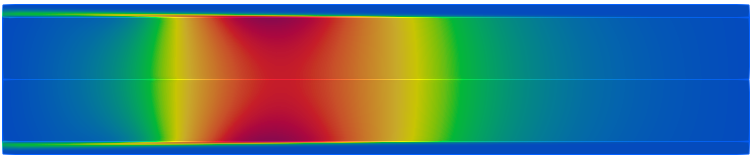}
\put(32,22){\textbf{t=0.014}}
\end{overpic}
\begin{overpic}[width=0.325\textwidth,grid=false]{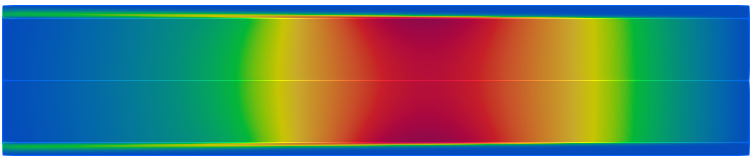}
\put(32,22){\textbf{t=0.021}}
\end{overpic}
\\
\begin{overpic}[width=0.325\textwidth,grid=false]{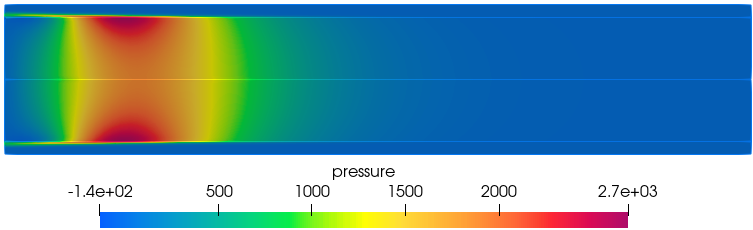}
\end{overpic}
\begin{overpic}[width=0.325\textwidth,grid=false]{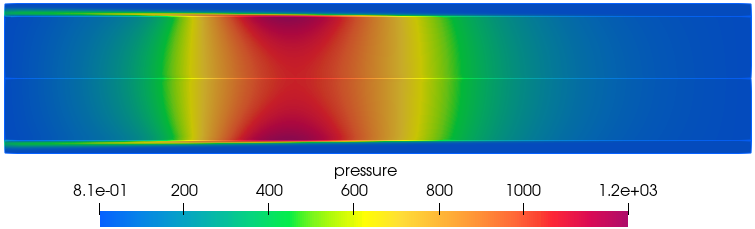}
\end{overpic}
\begin{overpic}[width=0.325\textwidth,grid=false]{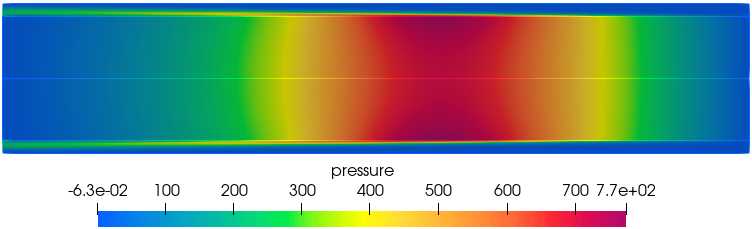}
\end{overpic}
\caption{Example 2, fluid and Darcy pressure computed by the 
non-iterative Robin-Robin scheme (top) 
and the monolithic scheme (bottom) at times $t= 0.007, 0.014, 0.021$ s 
(from left to right).}
\label{paraview:computed-pressure}

\end{figure}
\begin{figure}[htb!]
\centering
\begin{overpic}[width=0.325\textwidth,grid=false]{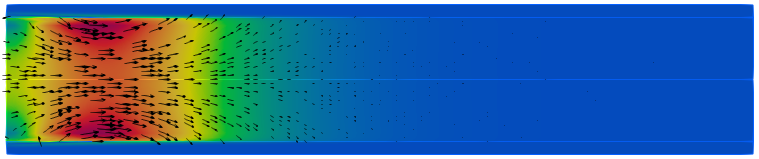}
\put(32,22){\textbf{t=0.007}}
\end{overpic}
\begin{overpic}[width=0.325\textwidth,grid=false]{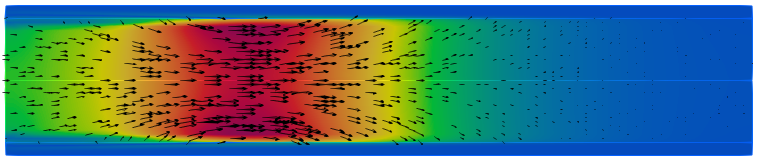}
\put(32,22){\textbf{t=0.014}}
\end{overpic}
\begin{overpic}[width=0.325\textwidth,grid=false]{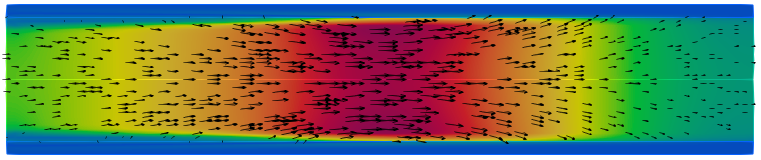}\put(32,22){\textbf{t=0.021}}
\end{overpic}
\\
\begin{overpic}[width=0.325\textwidth,grid=false]{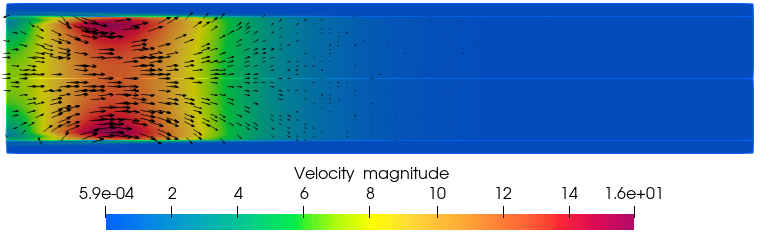}
\end{overpic}
\begin{overpic}[width=0.325\textwidth,grid=false]{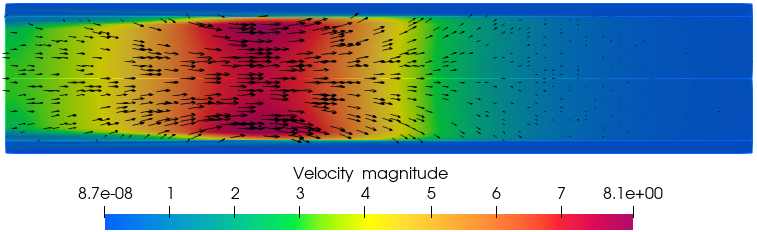}
\end{overpic}
\begin{overpic}[width=0.325\textwidth,grid=false]{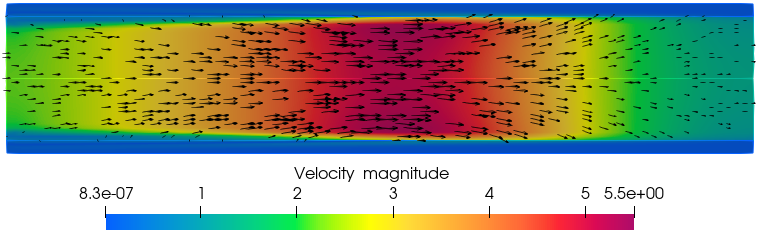}
\end{overpic}
\caption{Example 2, fluid and Darcy velocity computed by the 
non-iterative Robin-Robin scheme (top) 
and the monolithic scheme (bottom) at times $t= 0.007, 0.014, 0.021$ s 
(from left to right). The arrows represent the velocity vectors
and the color represents the velocity magnitudes.}
\label{paraview:computed-velocity}
\end{figure}

To further compare the solutions given by the different methods, Figure~\ref{matplots:pressure-velocity-disp} displays the fluid pressure, vertical fluid velocity,
vertical Darcy velocity, and vertical structure
displacement along the interface computed at different times
by the non-iterative and iterative Robin-Robin methods, and the monolithic method. We see that the curves given by the iterative Robin-Robin method and the monolithic method
overlap. There is a slight difference with the curves given by the non-iterative Robin-Robin method, which becomes less noticeable as time passes. It seems that initially the lack of iterations in the Robin-Robin method slows down the wave. Overall, we conclude that the non-iterative Robin-Robin method provides accuracy comparable to the other two methods at a significantly reduced computational cost.

\begin{figure}[ht!]
\begin{center}
\includegraphics[width=0.325\textwidth]{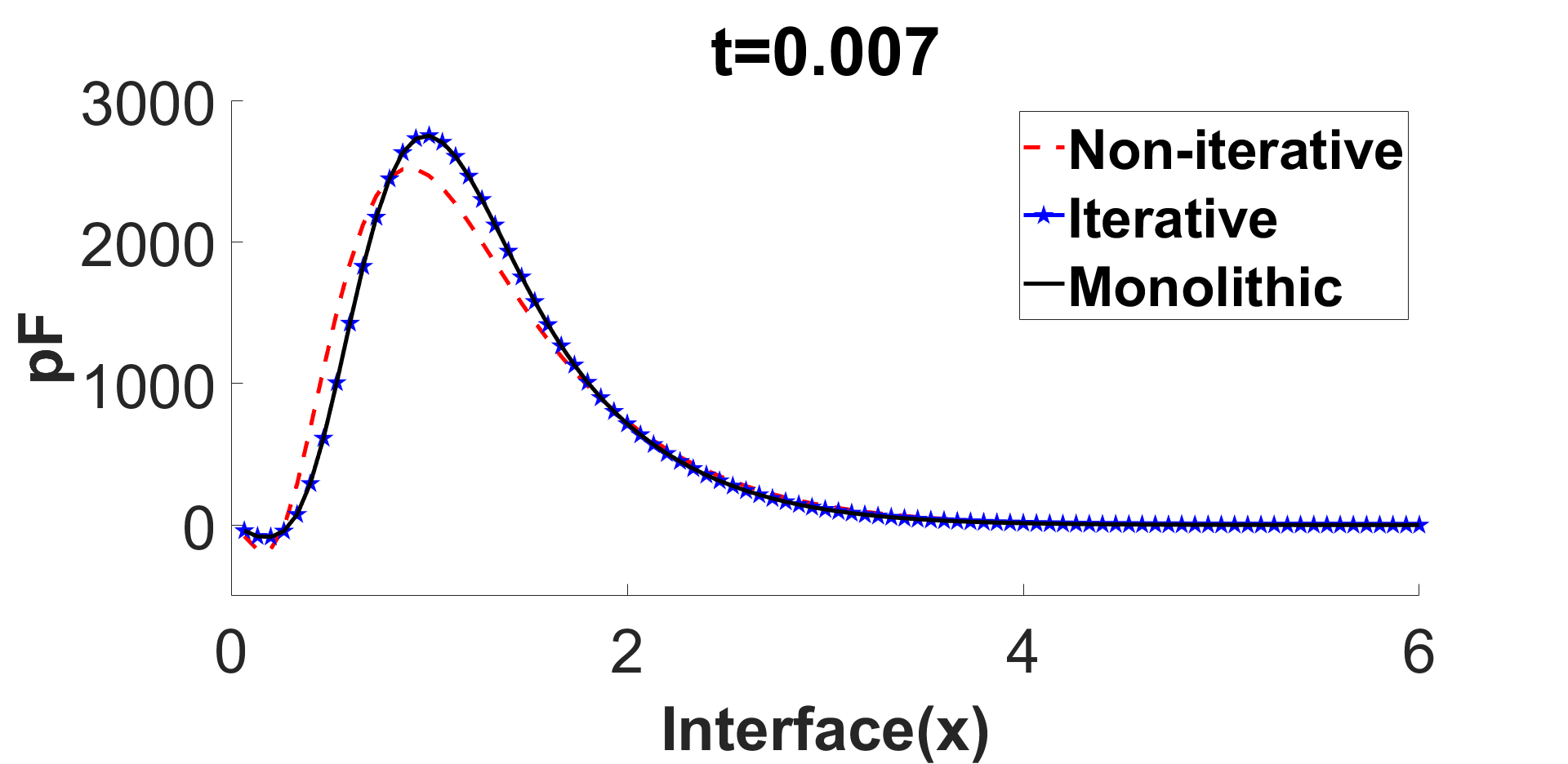}
\includegraphics[width=0.325\textwidth]{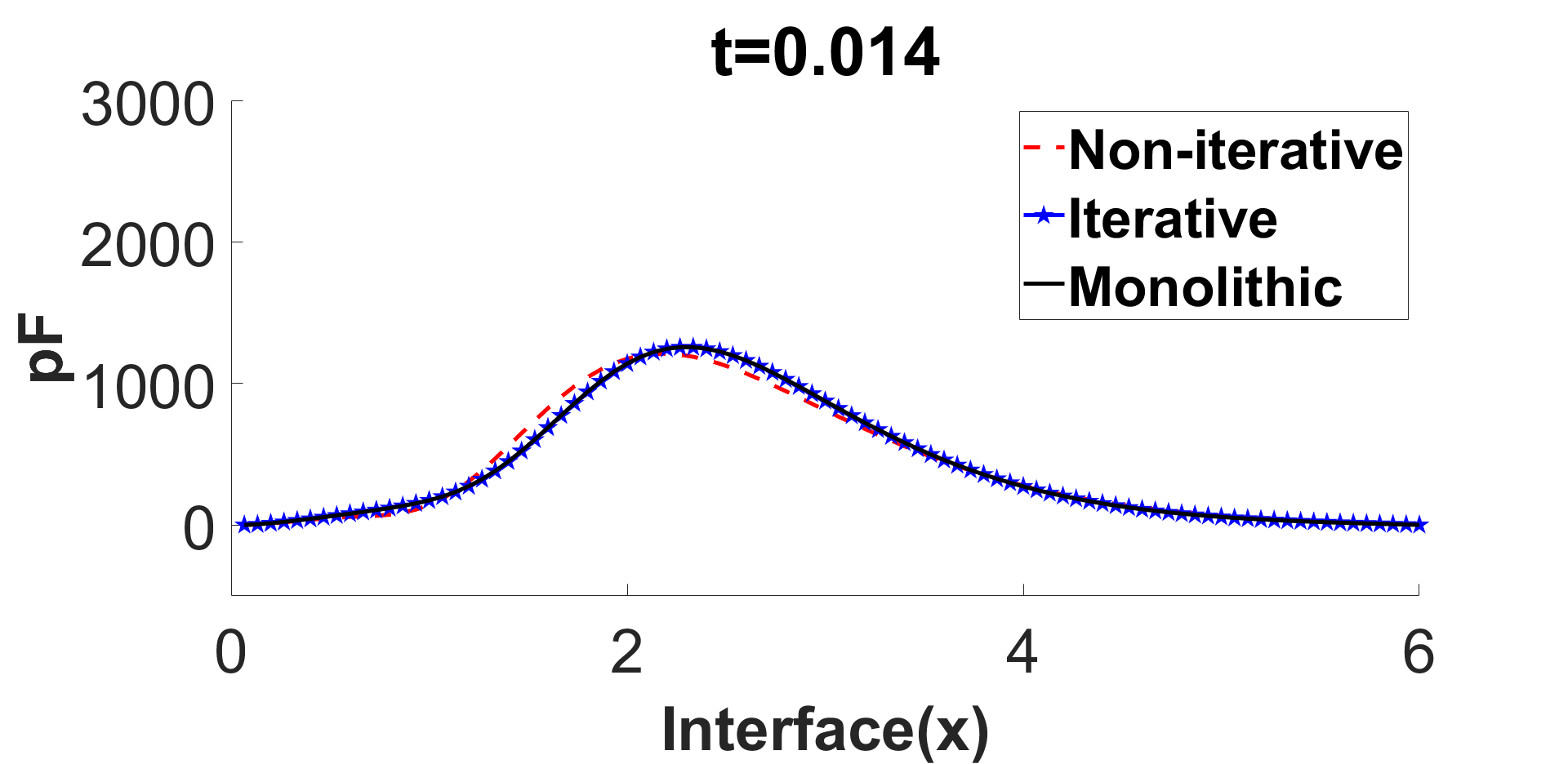}
\includegraphics[width=0.325\textwidth]{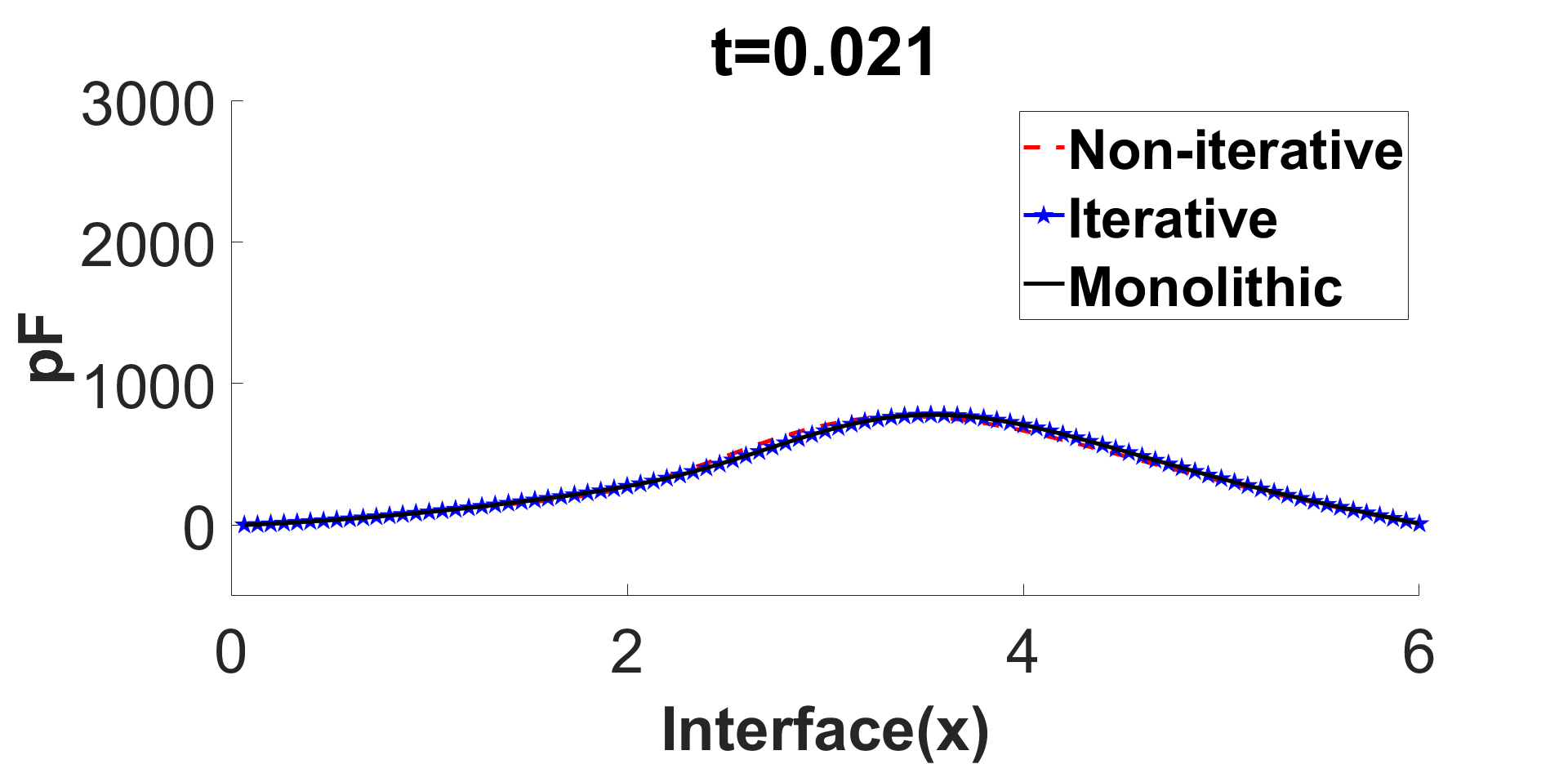}\\
\includegraphics[width=0.325\textwidth]{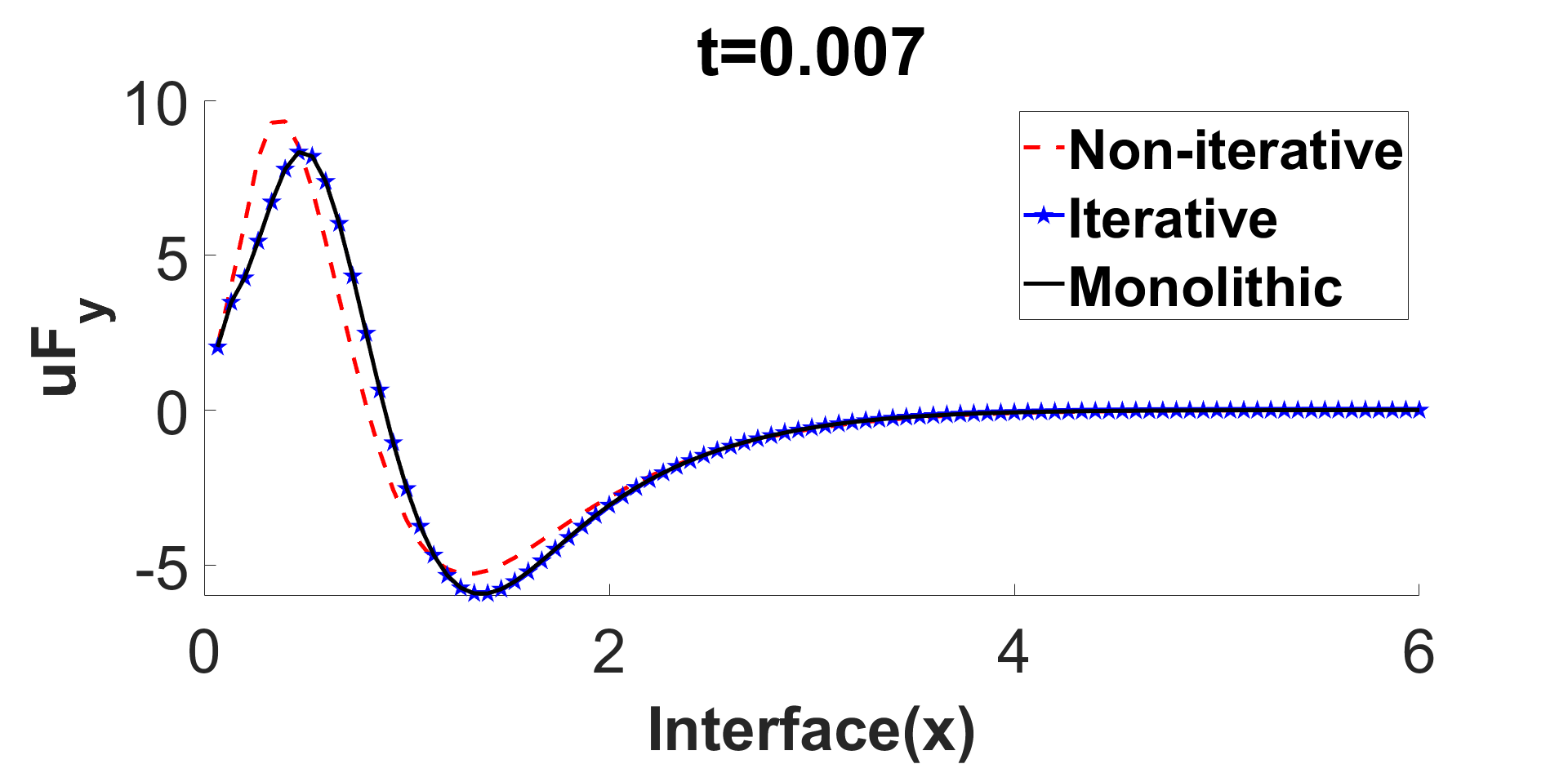}
\includegraphics[width=0.325\textwidth]{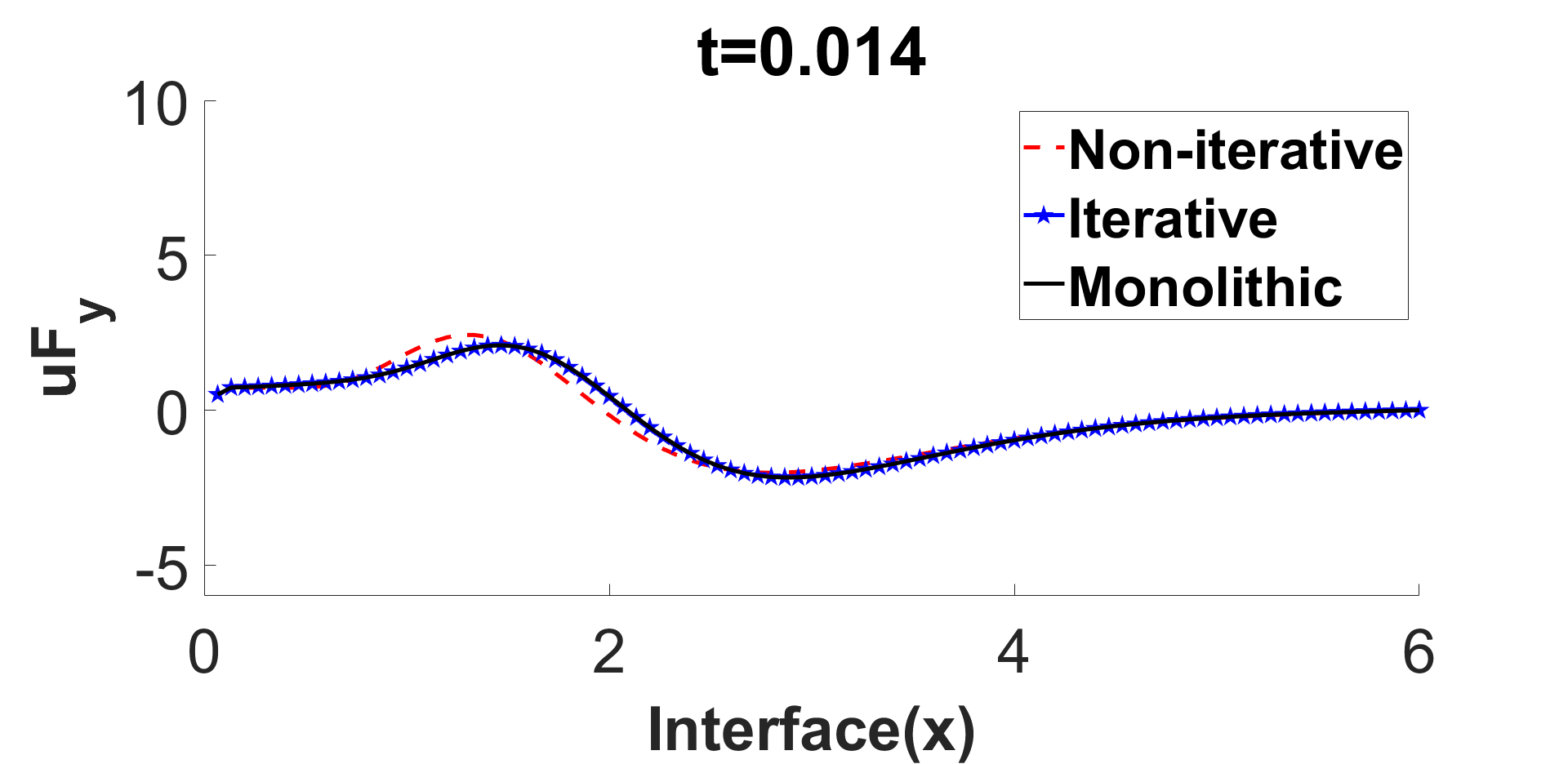}
\includegraphics[width=0.325\textwidth]{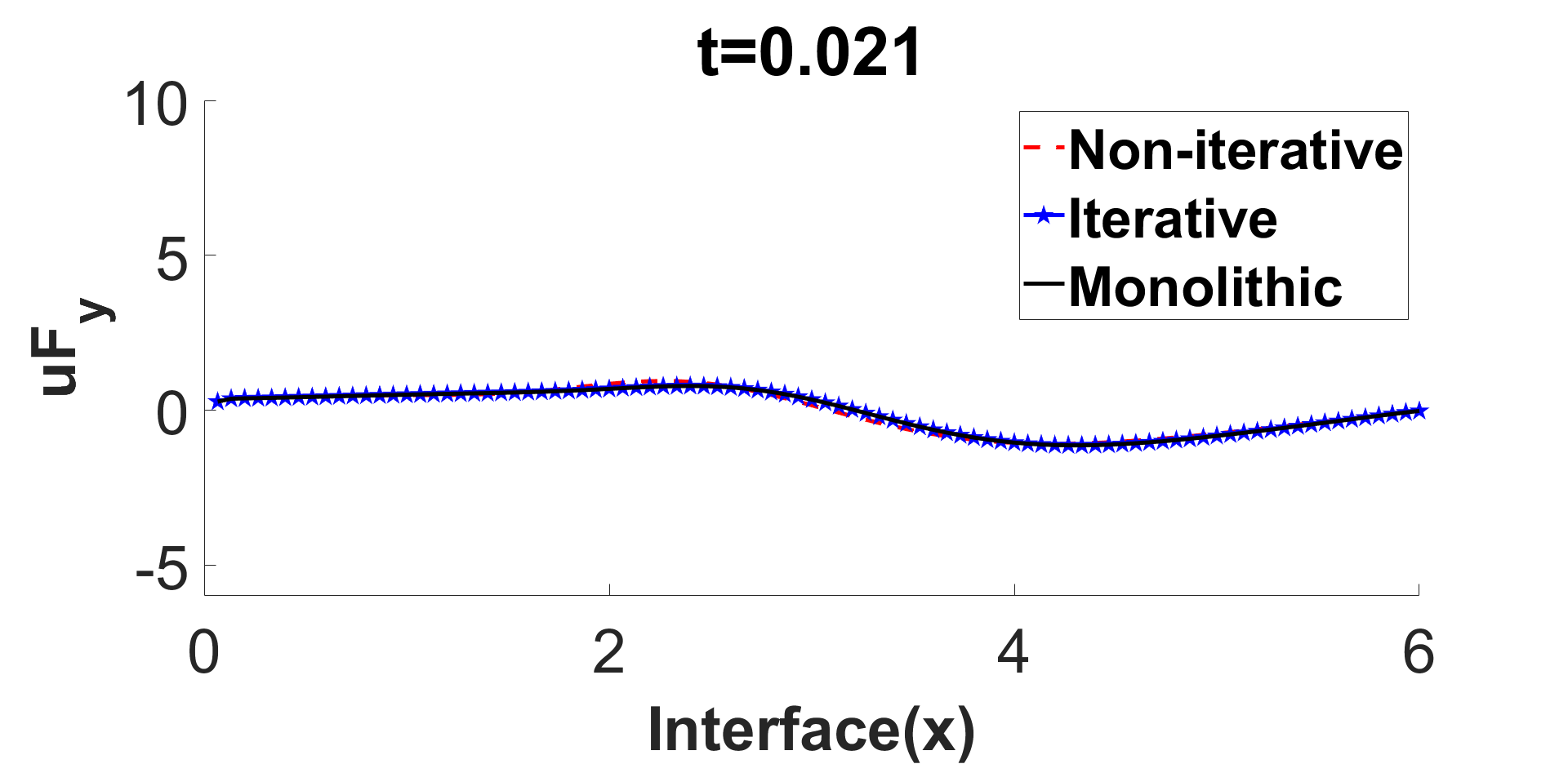}\\
\includegraphics[width=0.325\textwidth]{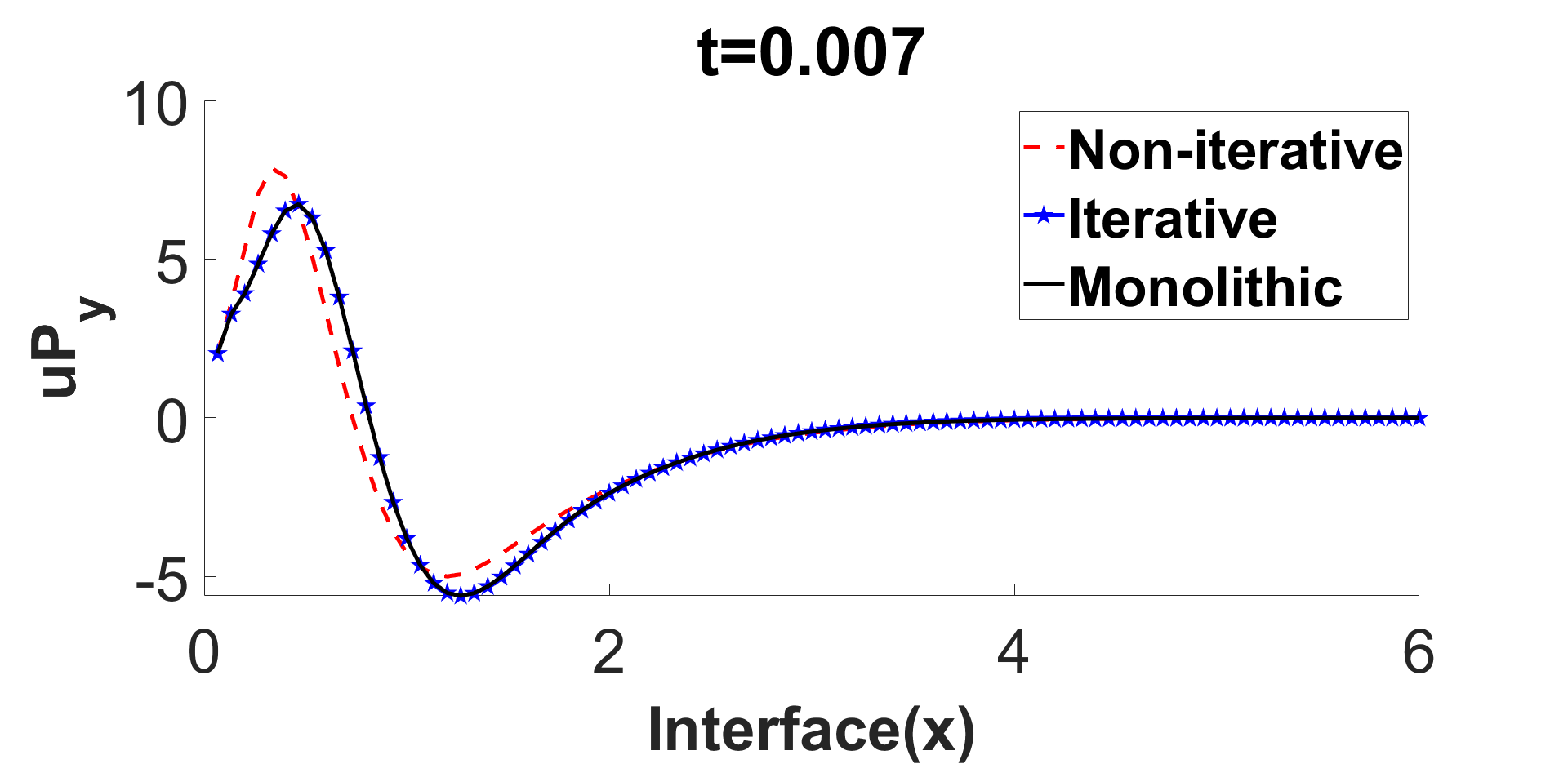}
\includegraphics[width=0.325\textwidth]{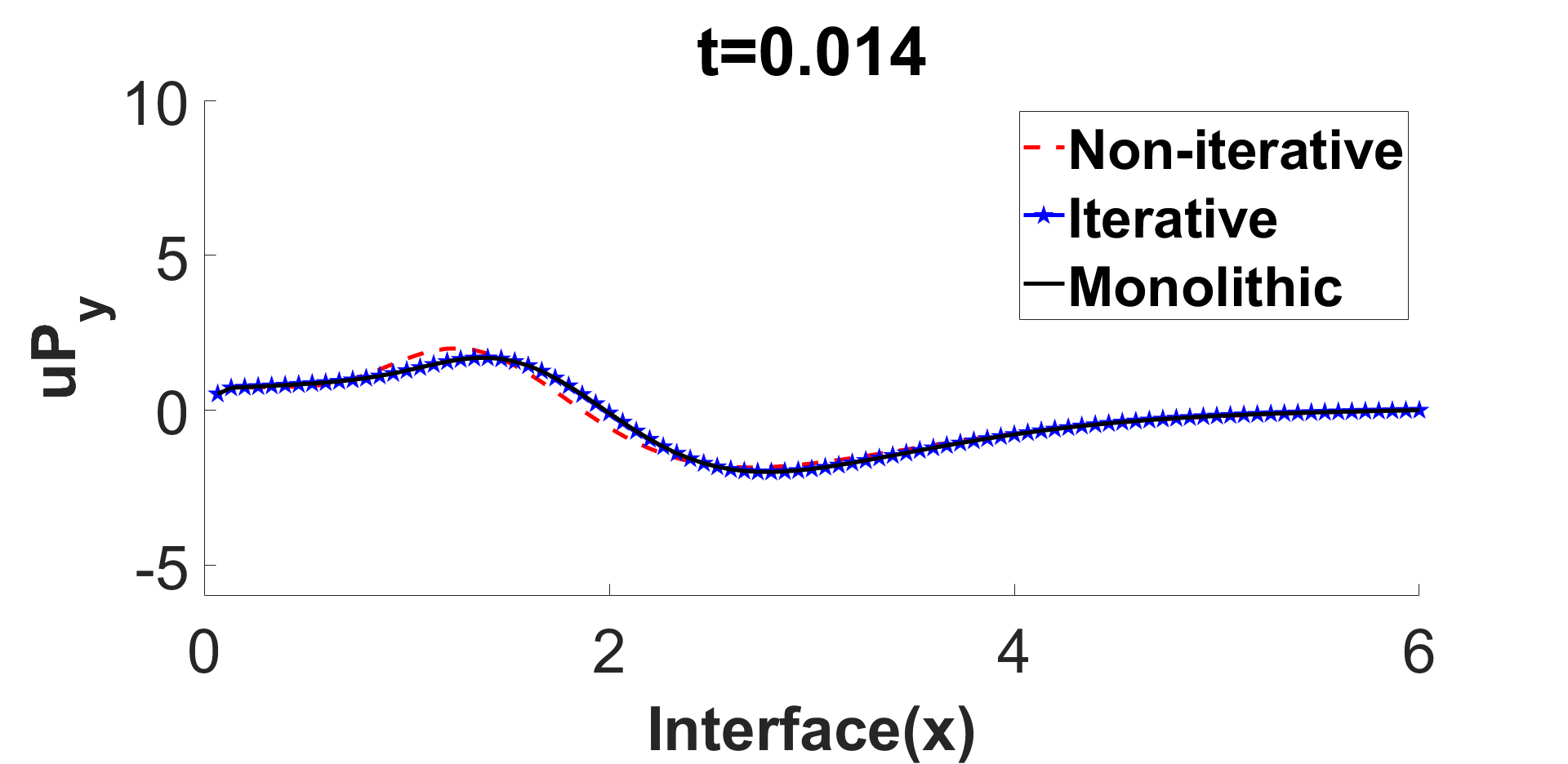}
\includegraphics[width=0.325\textwidth]{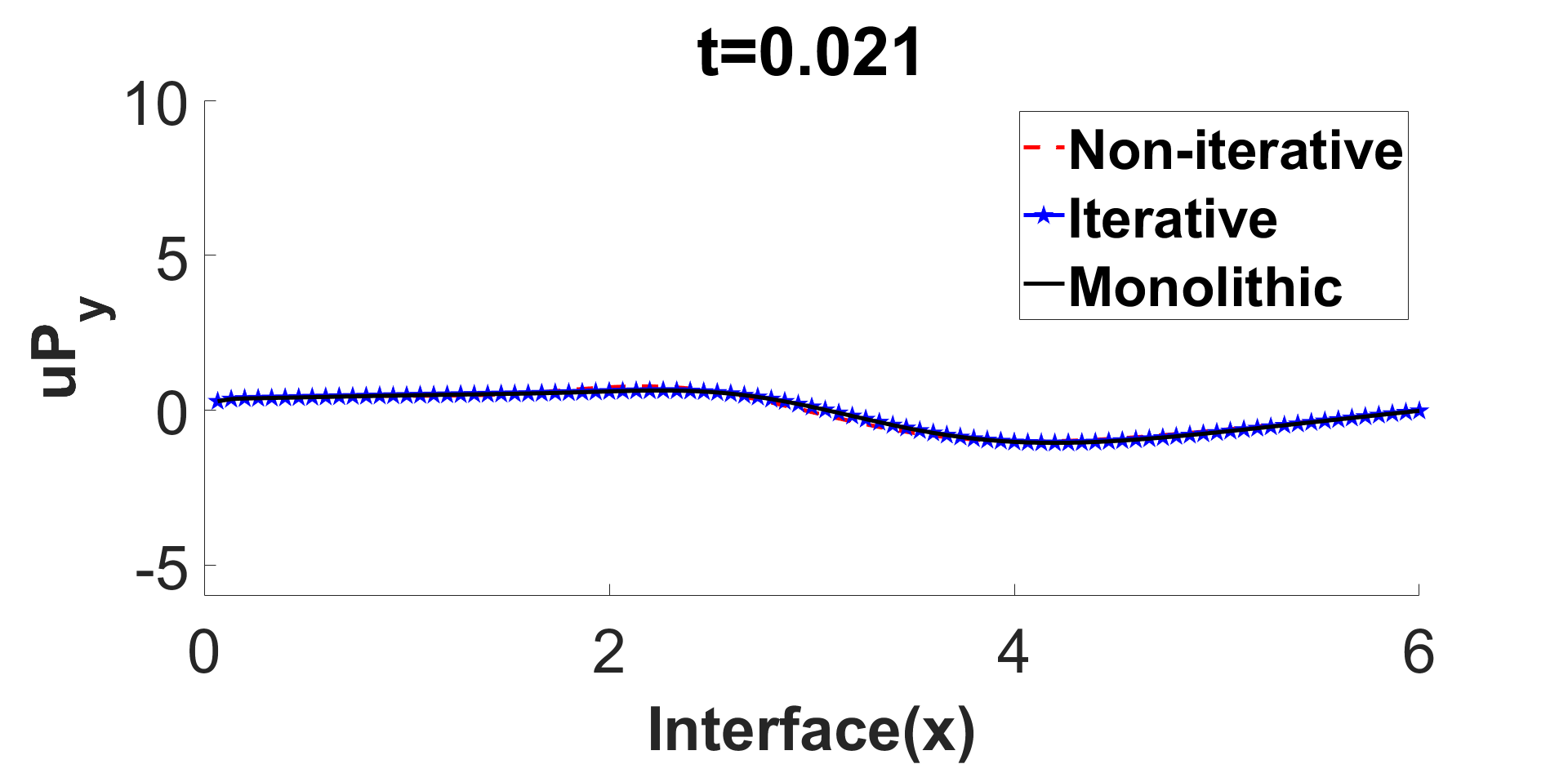}\\
\includegraphics[width=0.325\textwidth]{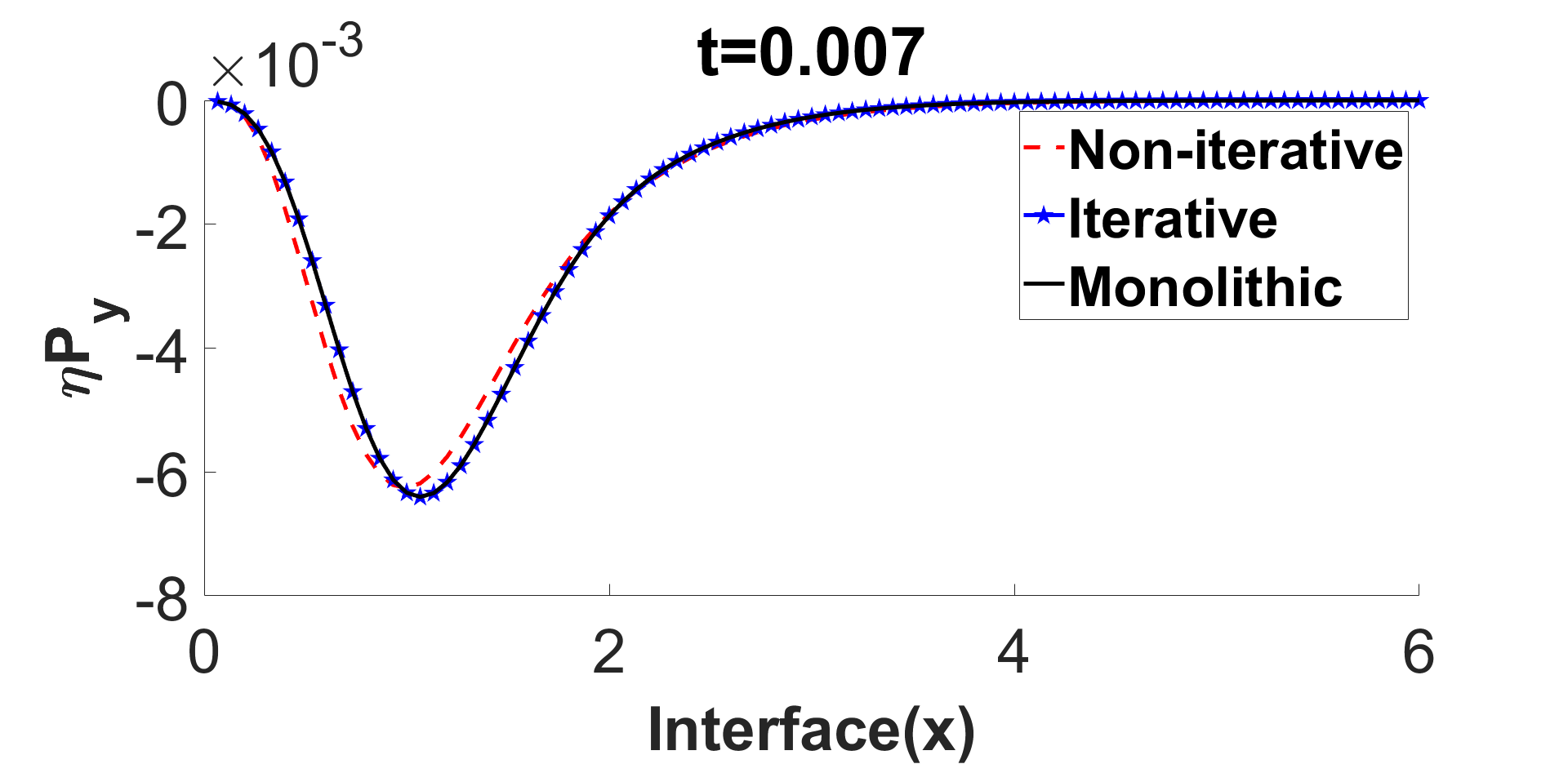}
\includegraphics[width=0.325\textwidth]{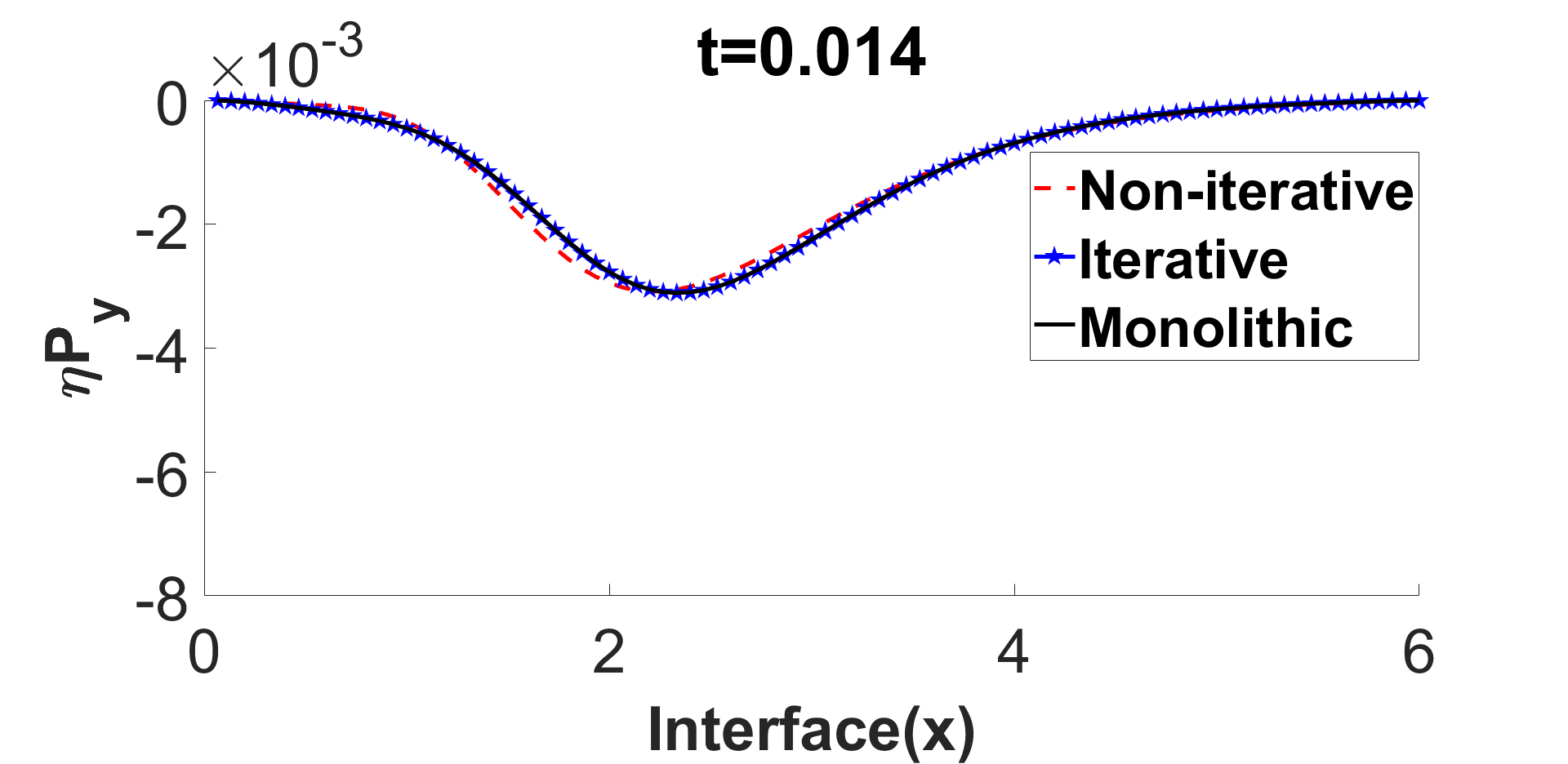}
\includegraphics[width=0.325\textwidth]{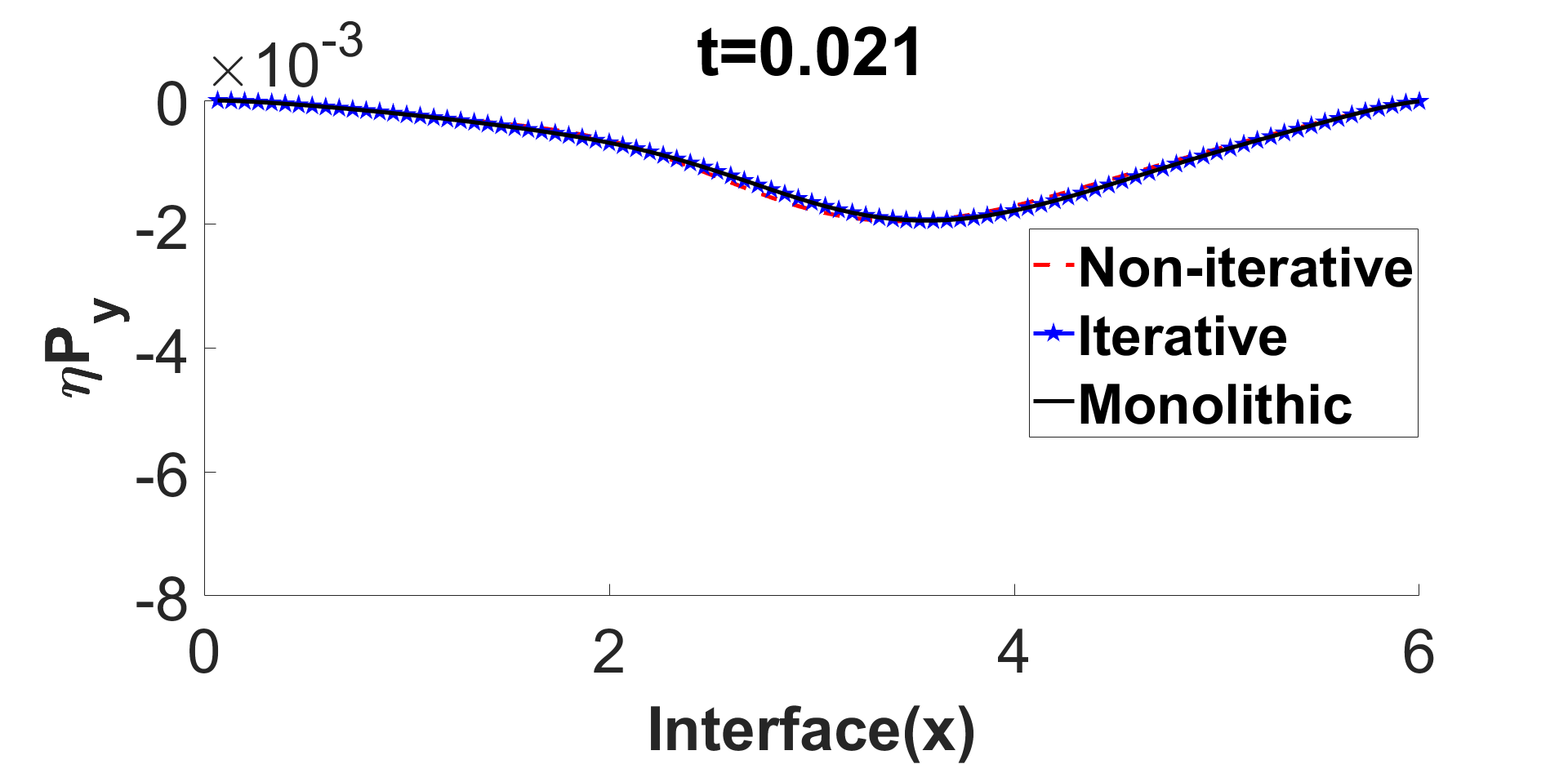}
\end{center}
\caption{Example 2, fluid pressure (first row), vertical fluid velocity (second row),
vertical Darcy velocity (third row), vertical structure
displacement (fourth row) along the interface computed by the Robin-Robin method in the non-iterative and iterative versions, and the monolithic method
at times $t= 0.007, 0.014, 0.021$ s (from left to right).}\label{matplots:pressure-velocity-disp}
\end{figure}

Finally, we test the robustness of the non-iterative Robin-Robin method to the Robin parameter $\gamma$. Figure~\ref{matplots:pressure-velocity-disp-gamma} shows the solution along the interface for $\gamma = 10, 100, 500, 1000, 2000$. The curves for $\gamma = 100, 500, 1000$ are very similar and the curves for $\gamma = 2000$ deviate from them slightly. The curves for $\gamma = 10$ are significantly different. As in the previous example, we conclude that there is a range of values of $\gamma$, for which the two terms in the Robin combination are of comparable magnitudes, that produce accurate results. Values outside of this range may result in reduction in accuracy, especially values of $\gamma$ that are too small. 

\begin{figure}[ht!]
\begin{center}
\includegraphics[width=0.325\textwidth]{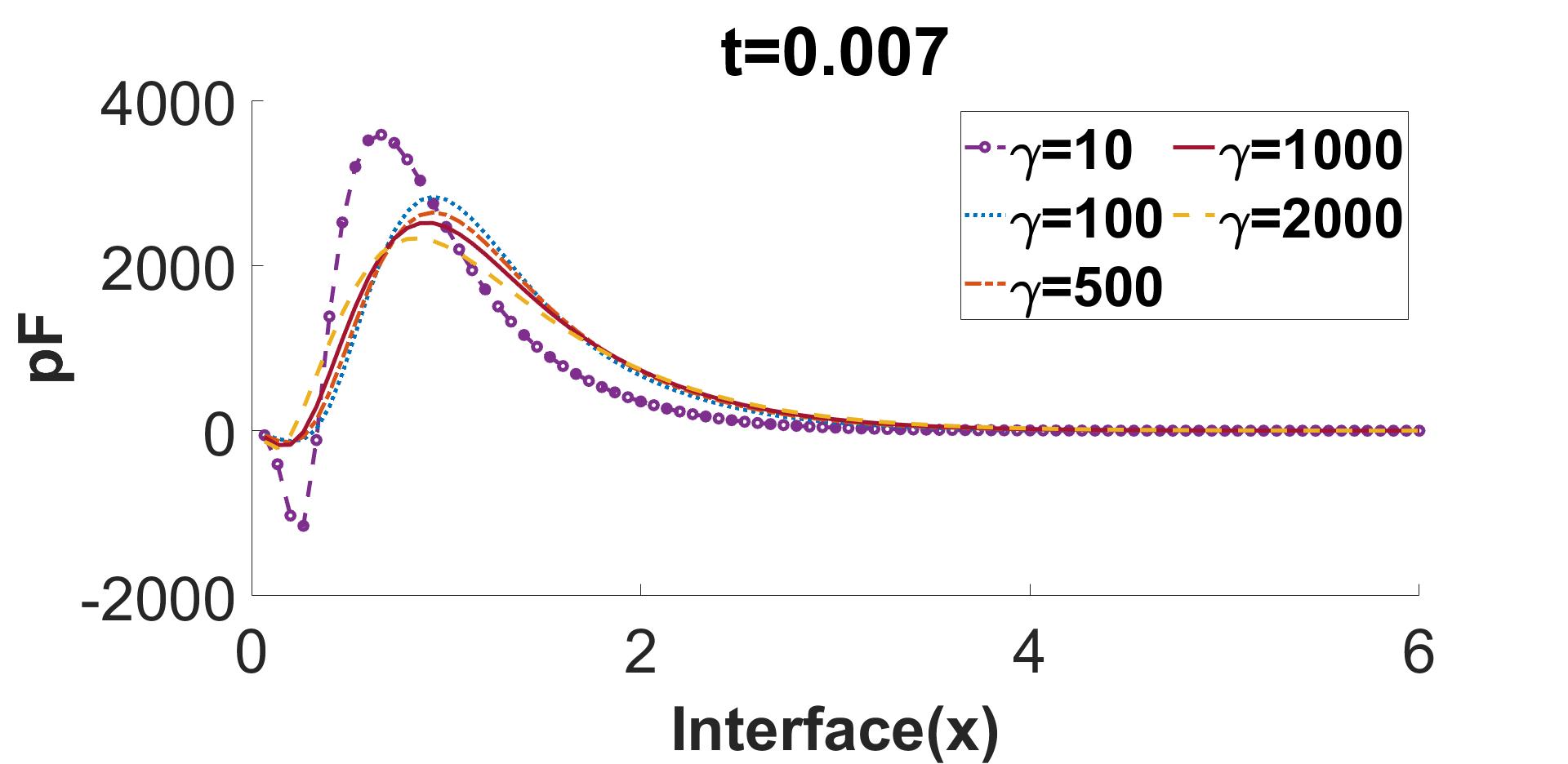}
\includegraphics[width=0.325\textwidth]{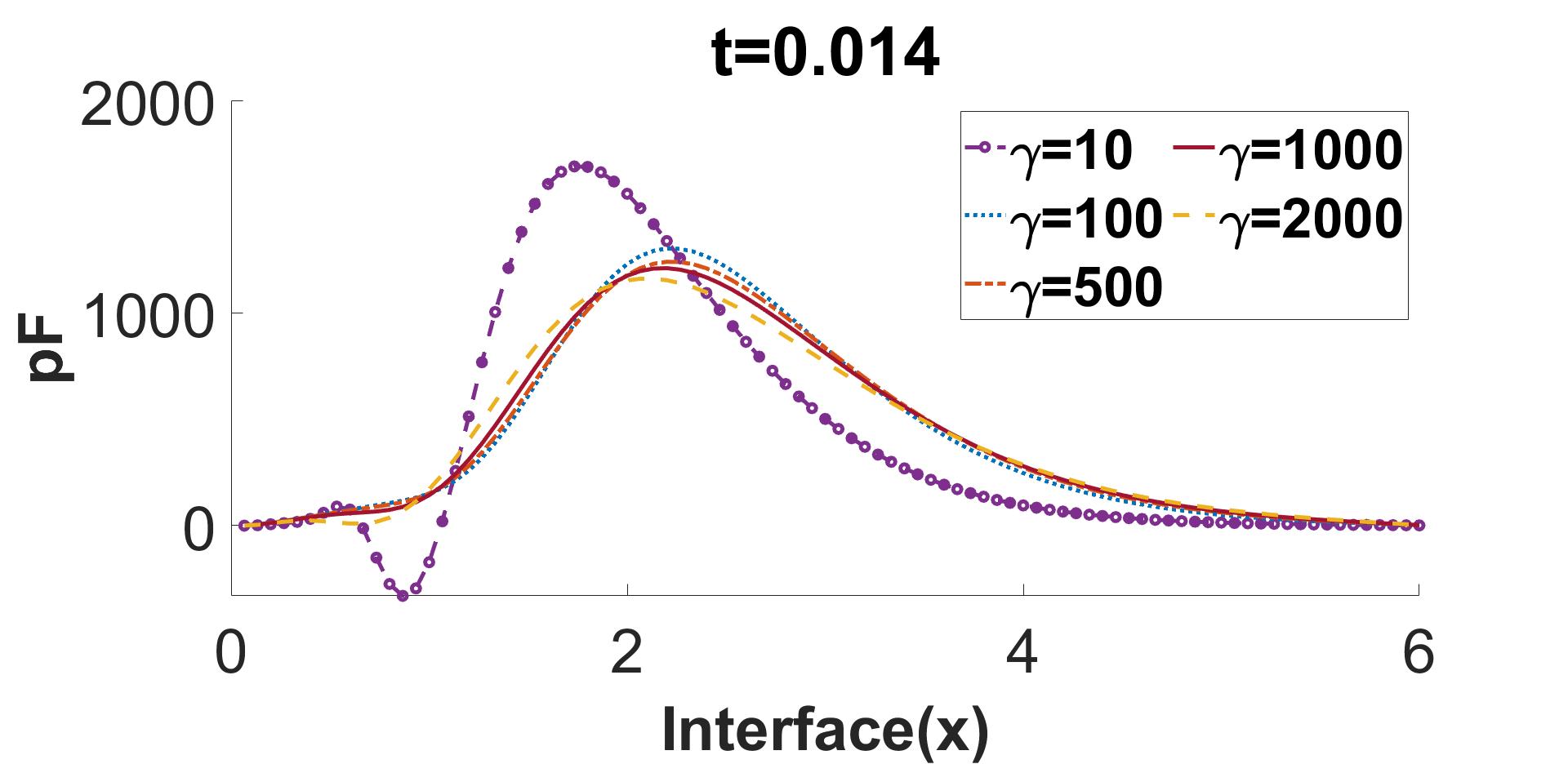}
\includegraphics[width=0.325\textwidth]{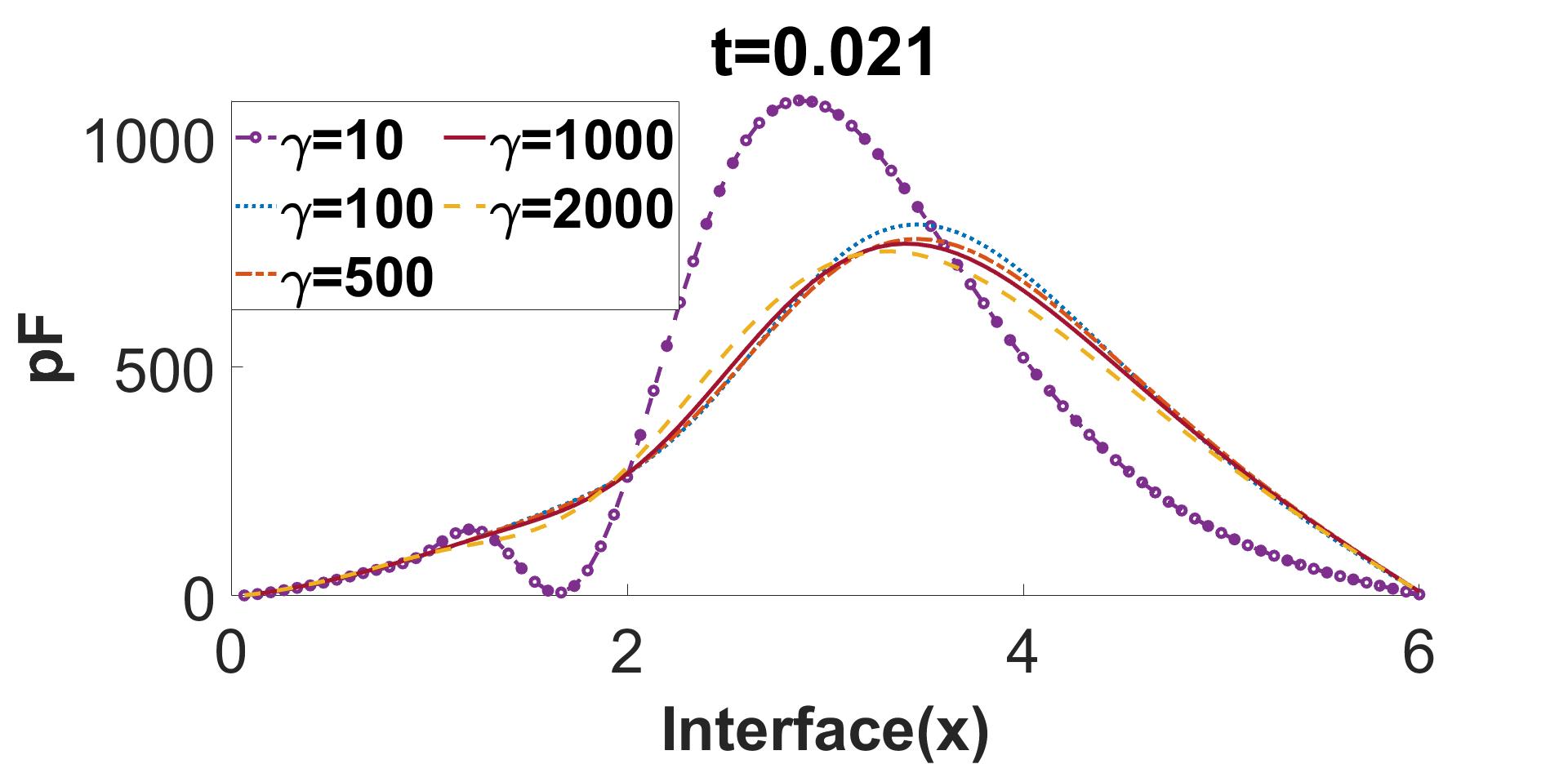}\\
\includegraphics[width=0.325\textwidth]{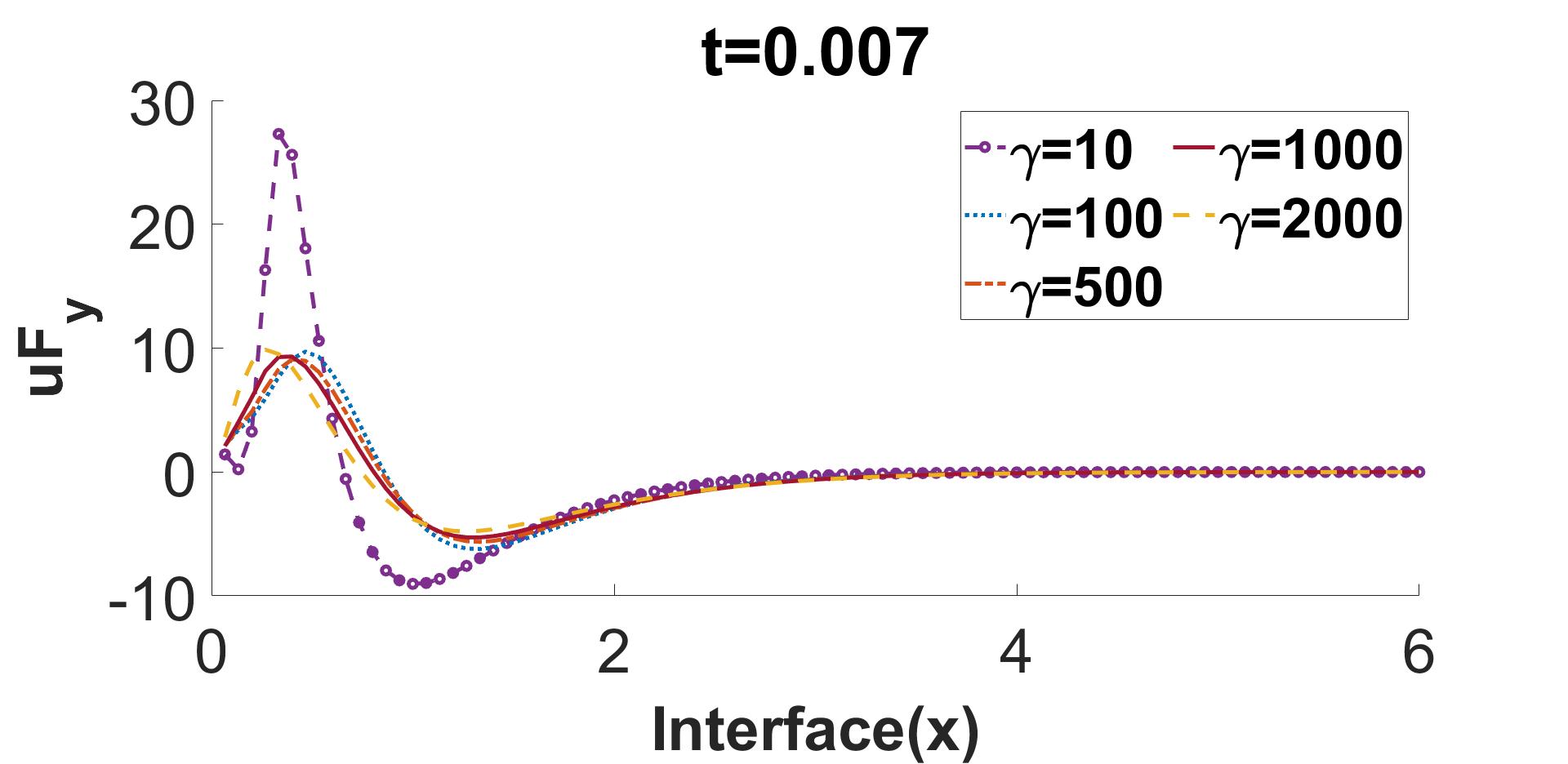}
\includegraphics[width=0.325\textwidth]{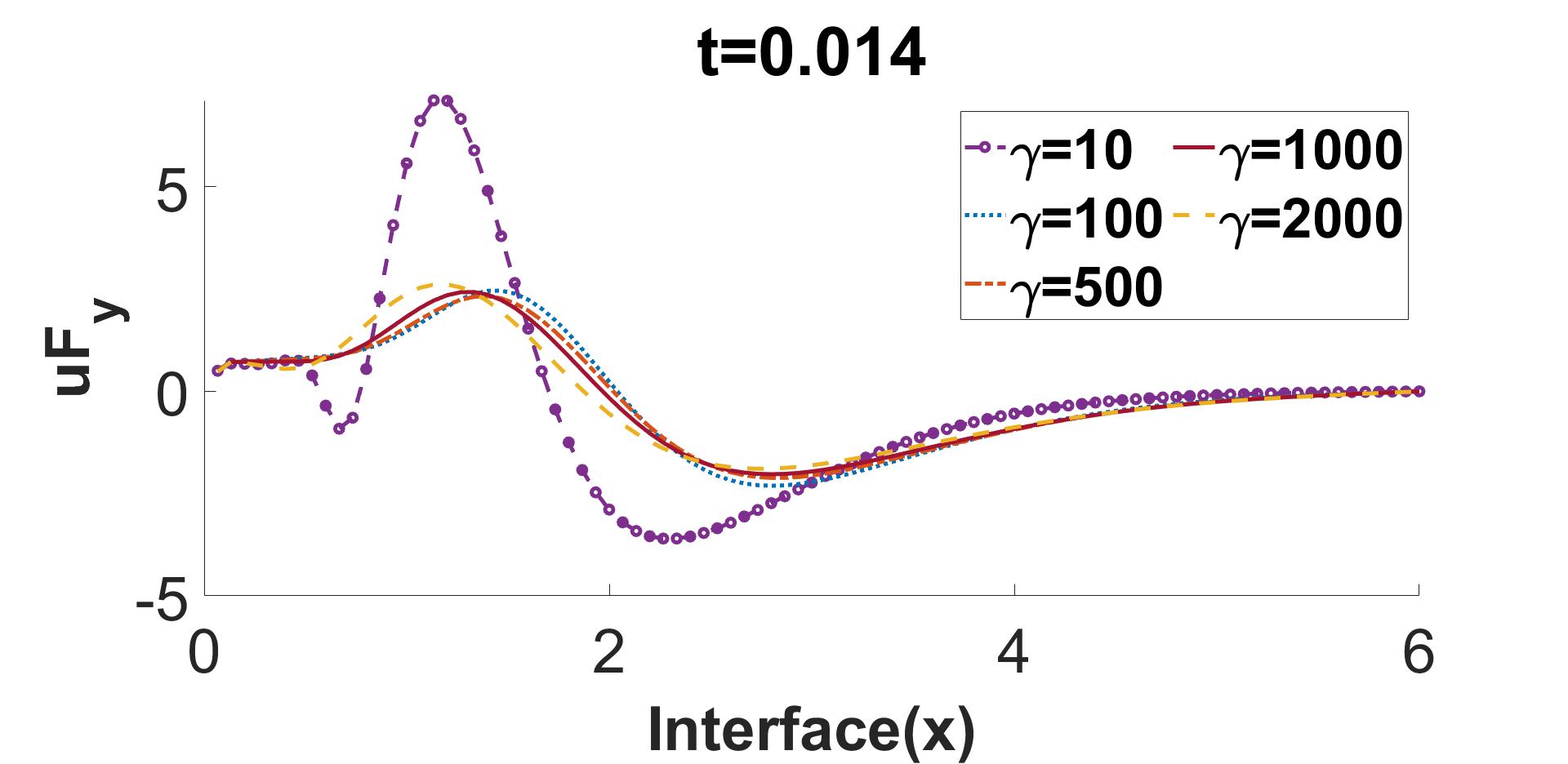}
\includegraphics[width=0.325\textwidth]{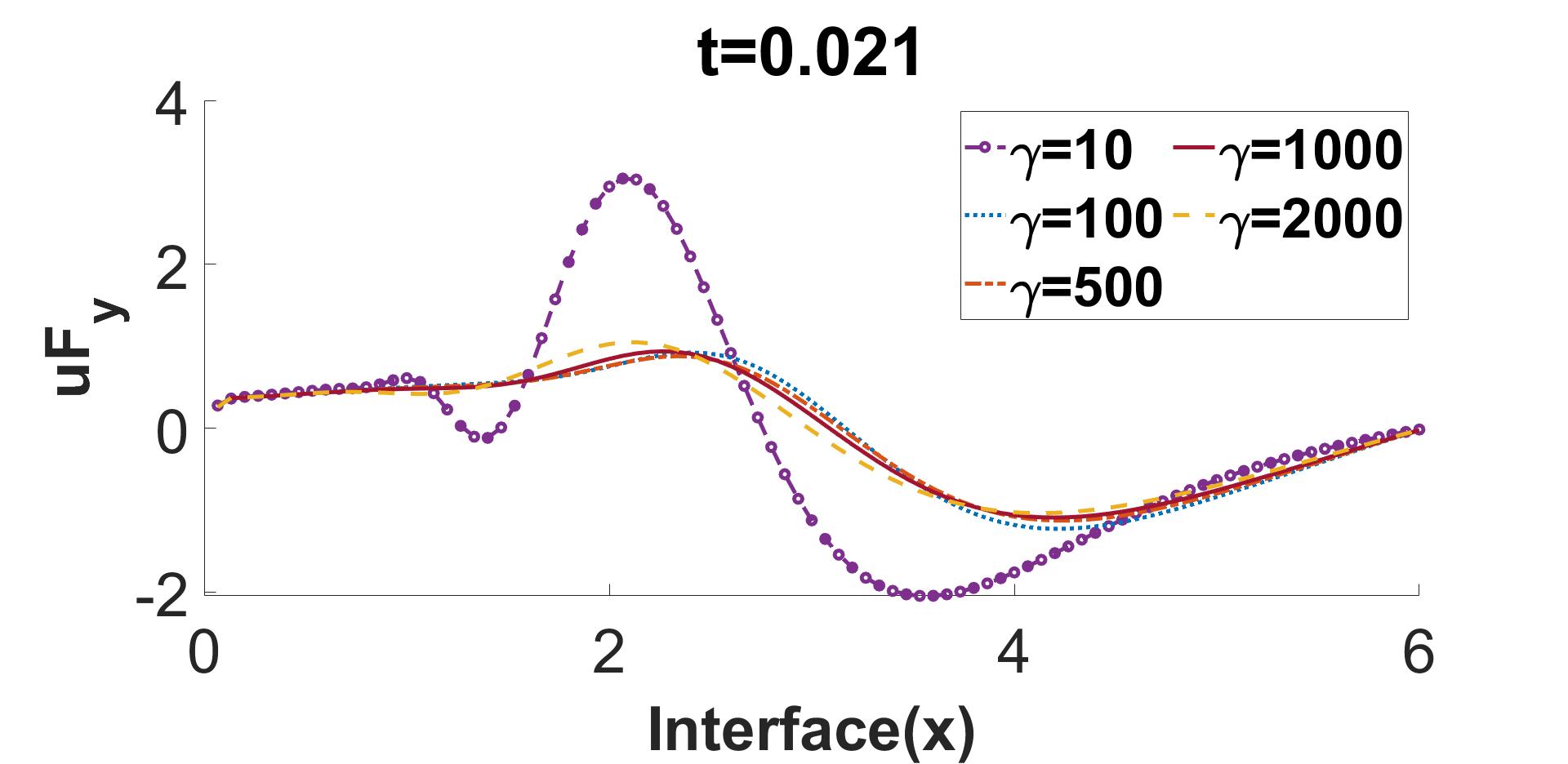}\\
\includegraphics[width=0.325\textwidth]{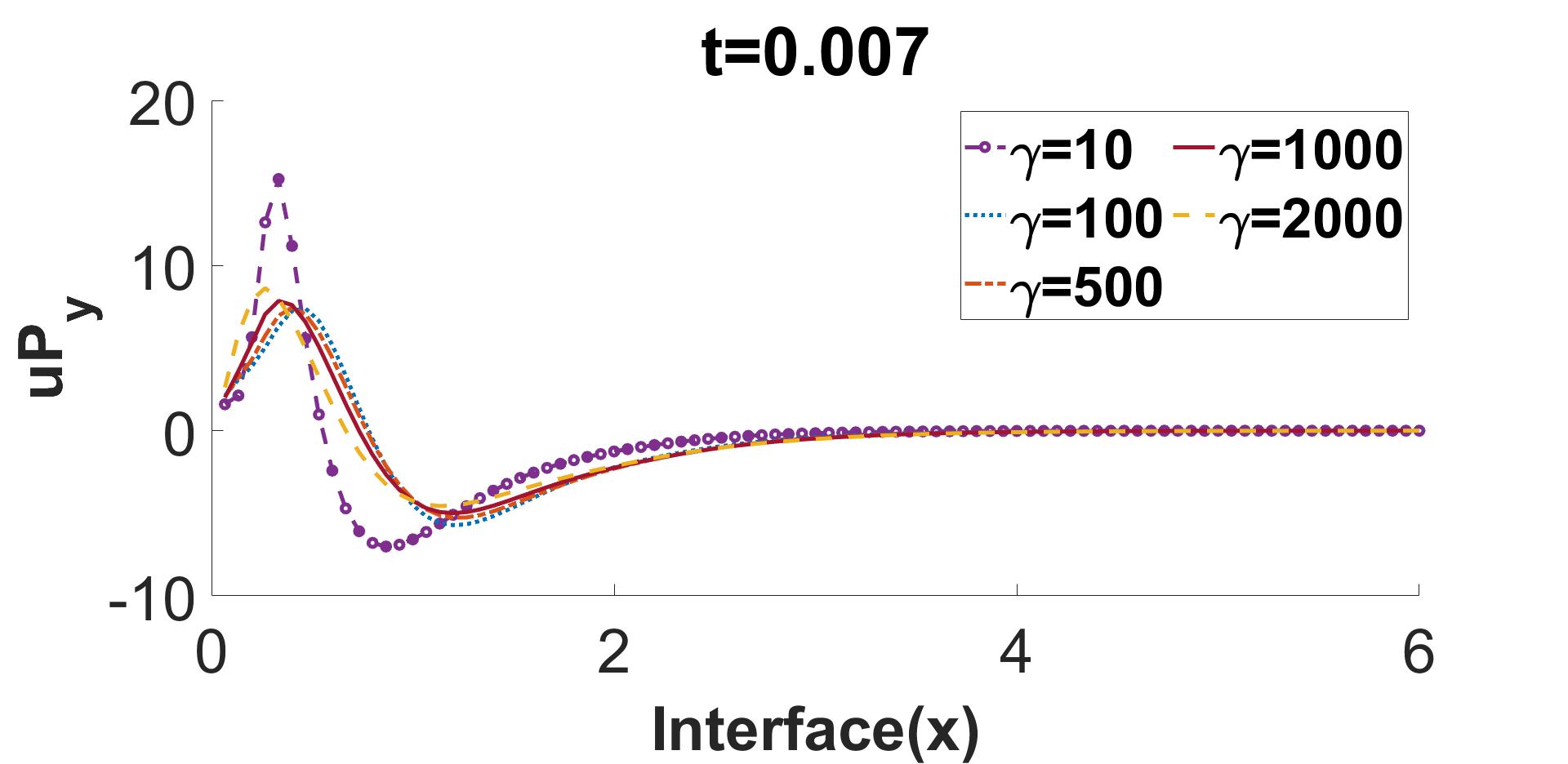}
\includegraphics[width=0.325\textwidth]{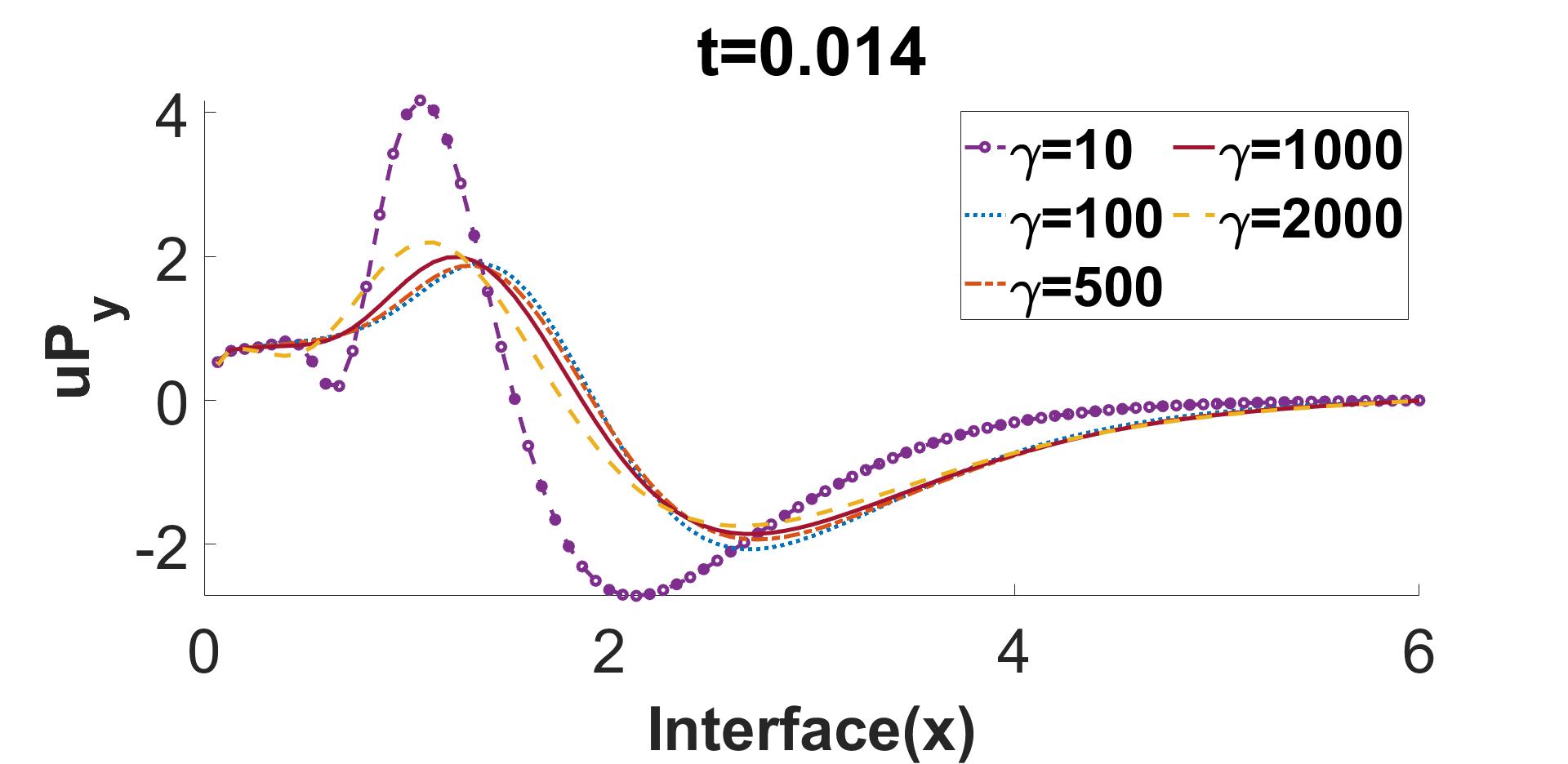}
\includegraphics[width=0.325\textwidth]{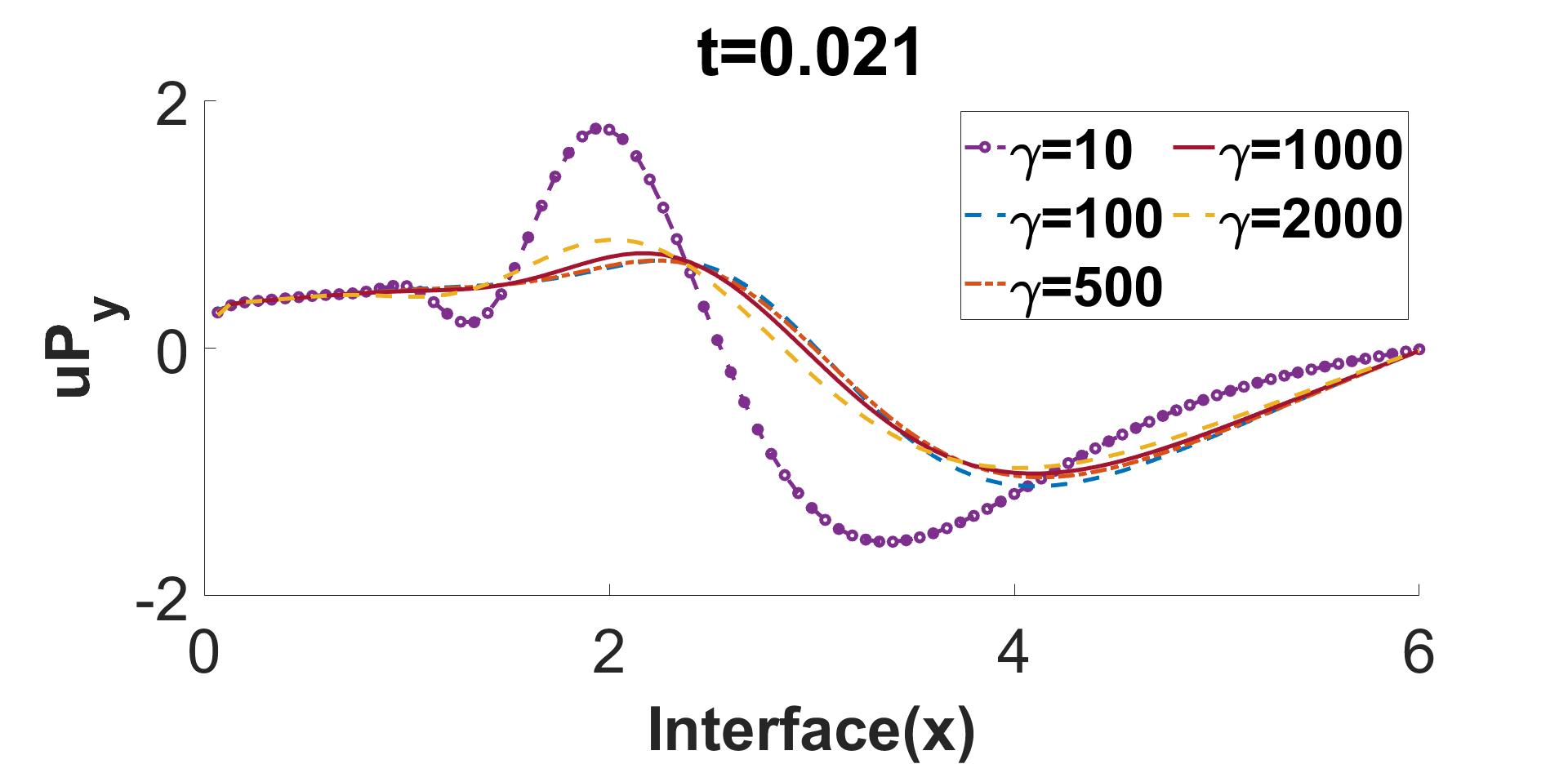}\\
\includegraphics[width=0.325\textwidth]{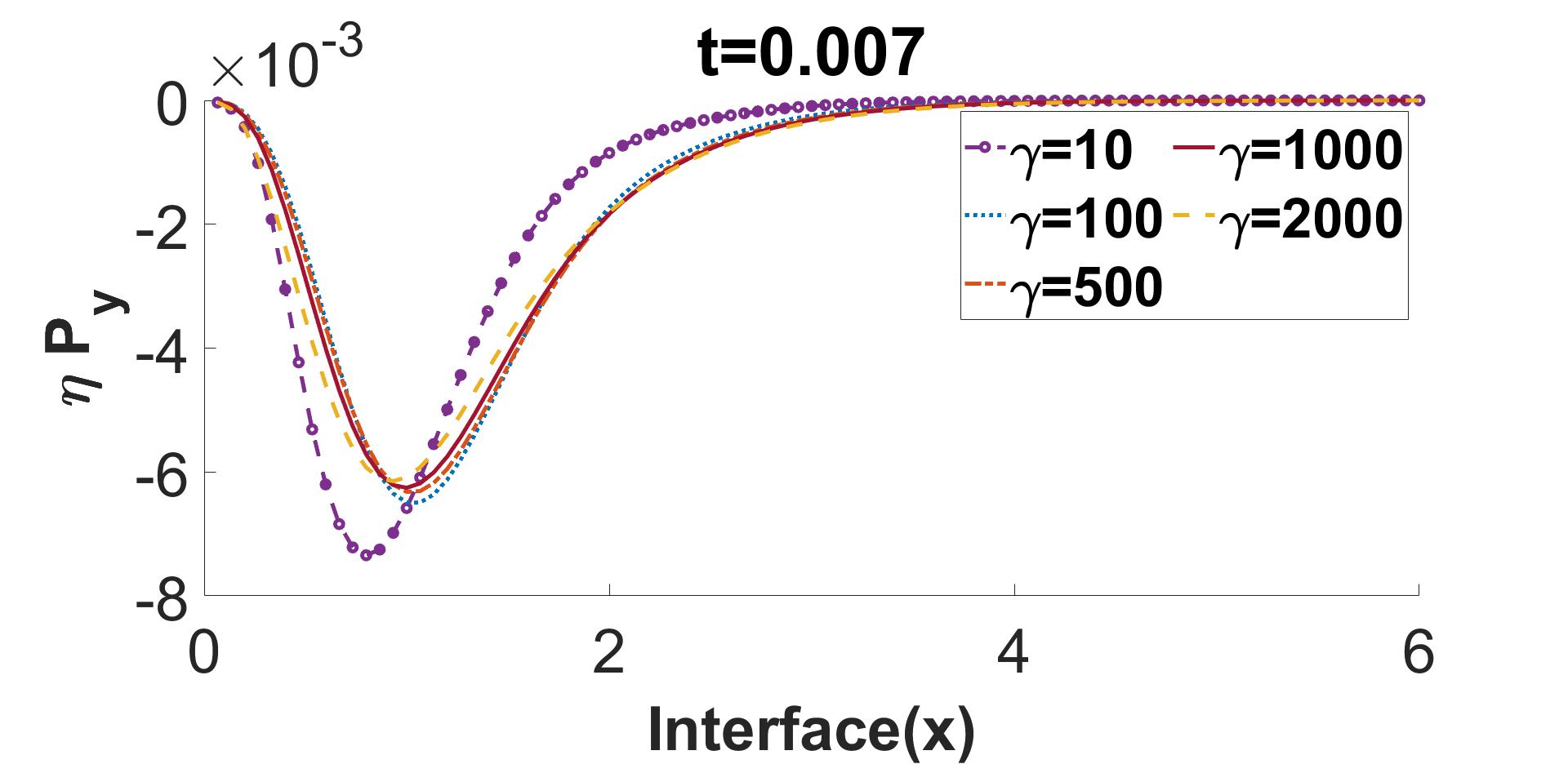}
\includegraphics[width=0.325\textwidth]{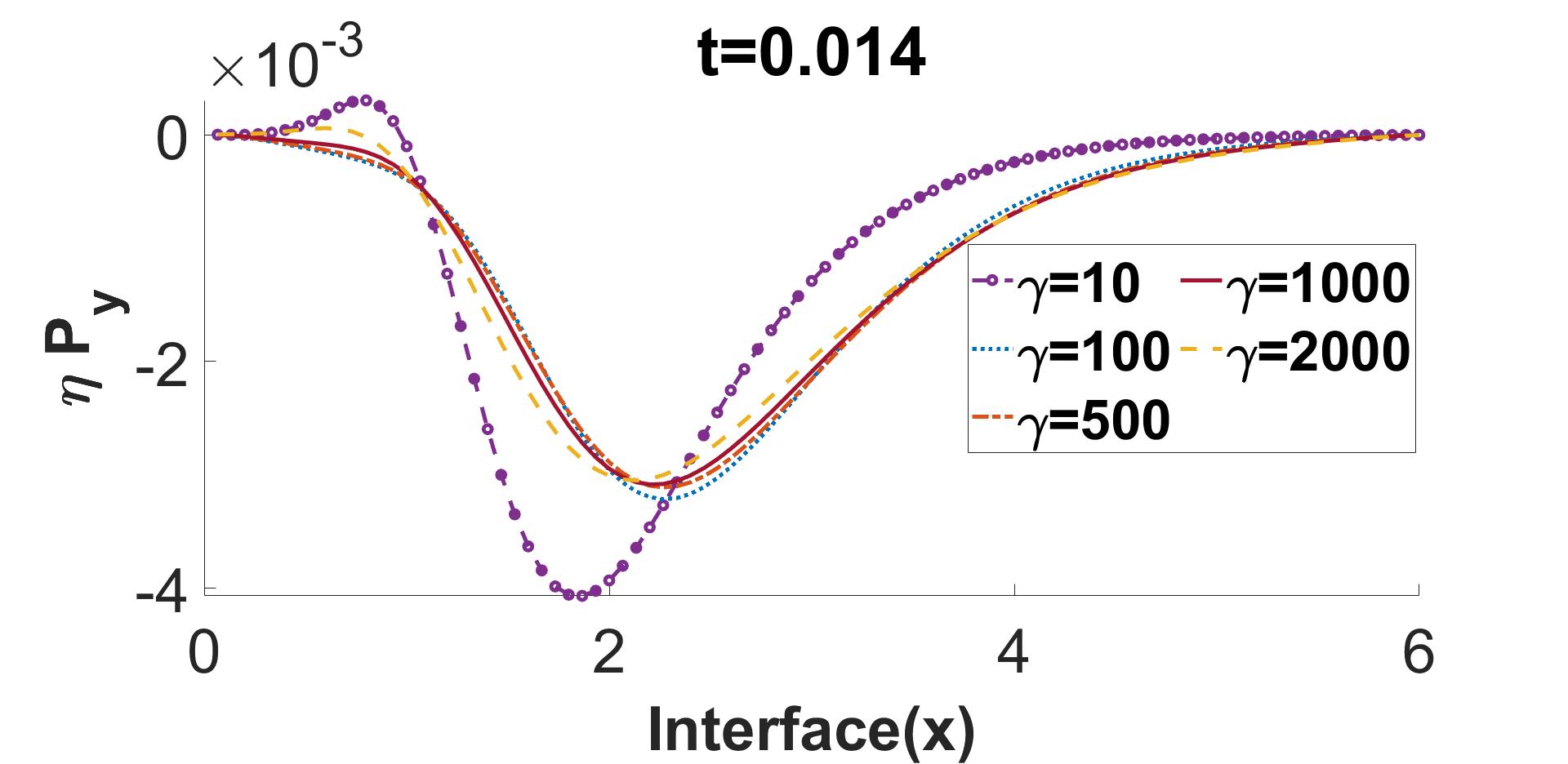}
\includegraphics[width=0.325\textwidth]{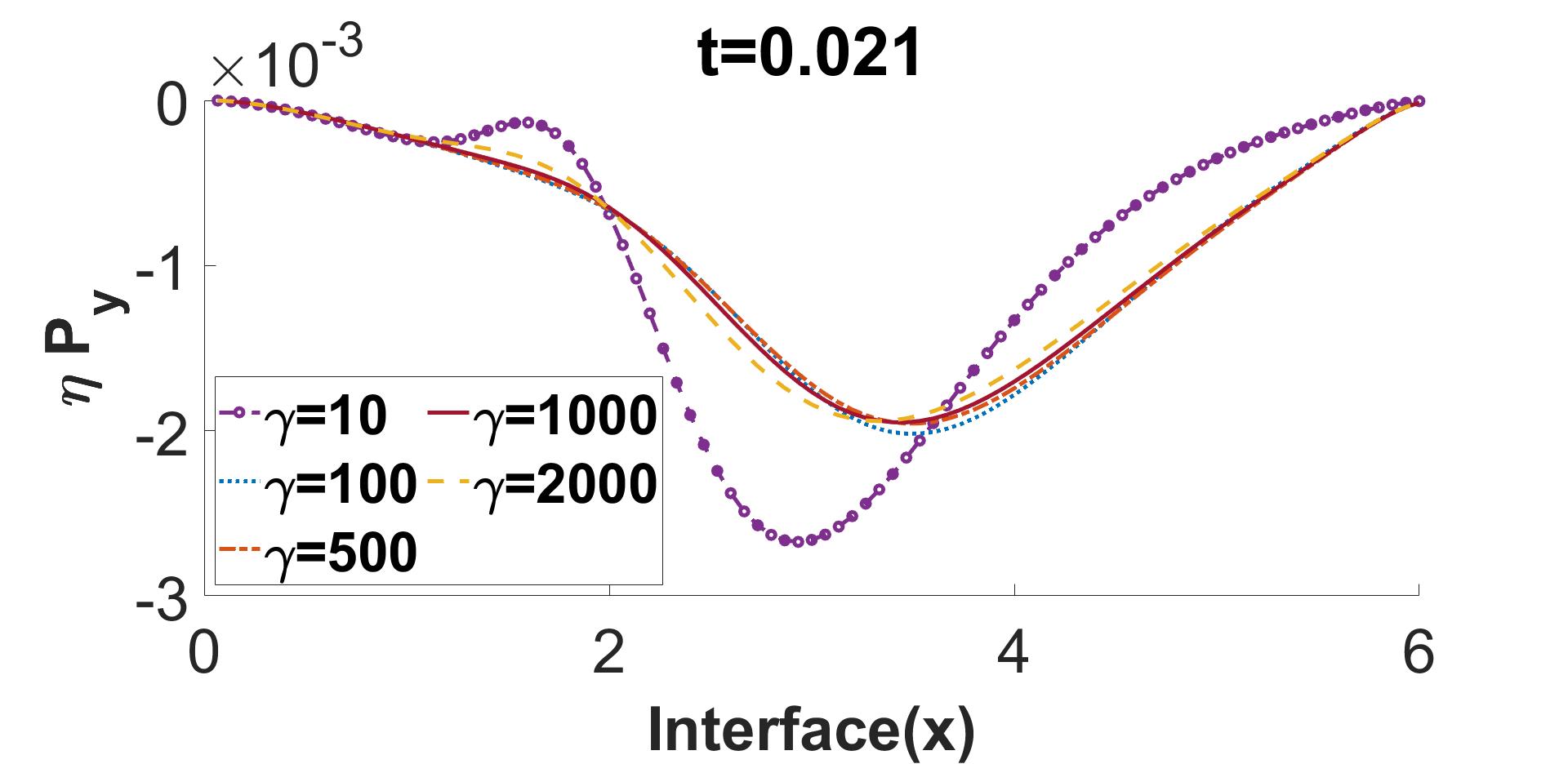}
\end{center}
\caption{Example 2, fluid pressure (first row), vertical fluid velocity (second row),
vertical Darcy velocity (third row), vertical structure
displacement (fourth row) along the interface computed by the Robin-Robin method in the non-iterative version at times $t= 0.007, 0.014, 0.021$ s (from left to right) for different values of $\gamma$.}\label{matplots:pressure-velocity-disp-gamma}
\end{figure}

\section{Conclusions}\label{sec:concl}
We developed an iterative and a non-iterative splitting methods for the Stokes-Biot model based on Robin-Robin transmission conditions. The methods utilize an auxiliary interface variable that models the Robin data. It is used to handle properly the insufficient regularity of the normal stress on the interface. With a suitable choice of negative norm for this variable, we established unconditional stability and first order convergence in time for the non-iterative scheme. To the best of our knowledge, this is the first such result in the literature in a general setting for Robin-Robin methods for both fluid-structure interaction and fluid-poroelastic structure interaction. We further studied the iterative version of the method and established convergence to a new monolithic scheme. We presented two sets of numerical experiments to illustrate the behavior of the methods. In the first example, based on a given analytical solution, we verified the first order time discretization rates and the convergence of the iterative scheme. We also studied the robustness to the Robin parameter $\gamma$. For extreme values of $\gamma$ (too small or too large), for which the accuracy of the non-iterative method deteriorates, we found that employing just a small number of iterations helps to recover optimal rates of convergence. The second example, based on a blood flow modeling benchmark, illustrated the applicability of the methods for computationally challenging physical parameters in the regimes of added-mass-effect and poroelastic locking. 

\bibliographystyle{abbrv}
\bibliography{fpsi-robin-robin}

\end{document}